\newcommand{\R}{\mathbb{R}}
\newcommand{\on}{\operatorname}
\newcommand{\Tau}{\mathcal{T}}
\newcommand{\G}{\mathcal{G}}
\newcommand{\vr}{\varrho}
\newcommand{\N}{\mathbb{N}}
\newcommand{\D}{\mathcal{D}}
\newcommand{\K}{\mathcal{K}}
\renewcommand{\S}{\mathcal{S}}
\newcommand{\F}{\mathcal{F}}
\newcommand{\KF}{\mathcal{K}_\mathcal{F}}
\newcommand{\acut}[2][\alpha]{\left[{#2}\right]^{#1}}
\newtheorem{theorem}{Theorem}[section]
\newtheorem{proposition}{Proposition}[section]
\newtheorem{lemma}{Lemma}[section]
\newtheorem{definition}{Definition}[section]
\newtheorem{remark}{Remark}[section]
\theoremstyle{remark}
\theoremstyle{definition}
\newtheorem{exmp}{Example}[section]
\title{A topological approach to fuzzy iterated function systems}
\author{Taras Banakh, Krzysztof Caban, Filip Strobin}
\address{T. Banakh: Ivan Franko National University of Lviv (Ukraine), and Jan Kochanowski University in Kielce (Poland); ORCID 0000-0001-6710-4611}
\email{t.o.banakh@gmail.com}
\address{K. Caban: {Institute of Mathematics, University of Gdańsk, Wita Stwosza 57
80-308 Gdańsk} (Poland)}
\email{kcaban718@gmail.com}
\address{F. Strobin: {Institute of Mathematics, University of Gdańsk, Wita Stwosza 57
80-308 Gdańsk} (Poland);
ORCID 0000-0002-8671-9053}
\email{{filip.strobin@ug.edu.pl}}
\keywords{fractal sets, fuzzy sets, iterated function systems, topological contractivity, multimetric spaces,
fuzzyfication}
\subjclass[2020]{28A80, 47H09, 03E72}
\begin{document}

\maketitle

\begin{abstract}
    In the paper we unify two extensions of the classical Hutchinson--Barnsley theory - the topological and the fuzzy-set approaches. We show that a fuzzy iterated function system (fuzzy IFS) on a Tychonoff space $X$ which is contracting w.r.t. some admissible multimetric, generates a natural fuzzy attractor in the hyperspace $\KF(X)$ of all compact fuzzy sets. As a consequence, we prove that a fuzzy IFS on a Hausdorff topological space which is topologically contracting admits a fuzzy attractor in a bit weaker sense. Our discussion involves investigations on topologies on the hyperspace $\KF(X)$ which are suitable for establishing convergence of sequences of iterations of a fuzzy Hutchinson operator.    
\end{abstract}

\section{Introduction}
One of the most popular ways of generating fractal sets and modelling related real-life objects bases on the famous Hutchinson theorem \cite{B}, which allows to obtain such sets as contractive fixed points of Hutchinson operators generated by iterated function systems (IFSs) comprised of contractive maps. The original, very restrictive setting of Banach contractions on complete metric spaces has been extended in many ways, which resulted in widening the range of objects that can be generated and investigated from the perspective of the fractal theory.

One of such extensions has been introduced by Mihail in \cite{M1} (but motivated by earlier works, for example \cite{Kameyama}), in which the metric contractivity is replaced  by so-called \emph{topological contractivity} considered in all Hausdorff topological spaces. Topological contractivity of an IFS means that any compact set can be covered by subinvariant compact set (\emph{compact dominacy}) and that all its ,,fibers'' (intersections of images of compositions of maps driven by a sequence in the code space) are singletons. Mihail proved that topologically contracting IFSs generate (necessarily unique) attractors, that is, invariant sets which are limits of iterations of the Hutchinson operators w.r.t. the Vietoris topology. This setting was further studied by Banakh et.al. in \cite{BKNNS} (see also \cite{MM}), in which it was shown that among a wide range of Tychonoff spaces, topological contractivity can be explained by contractivity w.r.t. some admissible multimetric (that is, a family of pseudometrics that separates points and which generate the original topology), and that compact dominating IFSs which are contractive w.r.t. some multimetric, generate attractors. This allowed to explain the Mihail result by contractivity w.r.t. multimetrics and revealed a deep connection between topological contractivity and (pseudo) metric contractivity.

The second extension of the classical IFS theory involves the framework of fuzzy sets, which itself have many applications. In this approach, first presented in \cite{Cabrelli}, classical IFSs endowed with certain families of selfmaps of the unit interval $[0,1]$, generate counterparts of Hutchinson operators on the hyperspace of ,,compact'' fuzzy sets. Contractive fixed points of these operators (being certain fuzzy sets) are called \emph{fuzzy attractors}. It turns out that classical results on the Hutchinson--Barnsley theory have natural extensions in such a setting. For example, contractive fuzzy IFSs on complete metric spaces generate fuzzy attractors (see i.e. \cite{Cabrelli}, \cite{OS}), that can be approximated with the use of counterparts of classical algorithms (see \cite{KSW}), and which can be described with a help of the projection map from the code space (see \cite{Miculescu}).

The aim of this paper is to unify these two approaches and study fuzzy IFSs on topological spaces, which are topologically contracting or contracting w.r.t. admissible multimetrics.

In the main results, we show that such fuzzy IFSs generate fuzzy attractors. The important part of our study is focused on detecting appropriate topologies on the hyperspace of ,,compact'' fuzzy sets. These topologies should be natural, strong enough, but at the same time they should allow the convergence of iterations of the fuzzy Hutchinson operators to fuzzy attractors. In the case of Tychonoff spaces, they should be independent of the choice of admissible multimetric.\\$\;$

The paper is organized as follows.

In Section \ref{sec:2} we recall notions and facts from the classical IFS theory (Subsections \ref{sec:ifs}, \ref{sec:HB}), from its extensions to topological and multimetric spaces (Subsections \ref{sec:TC}, \ref{sec:Mult}, \ref{sec:multimetricIFS}), and from the fuzzy-set approach to the IFS theory (Subsection \ref{sec:fuzzy}). In Subsections \ref{sec:Mult}, \ref{sec:multimetricIFS} and \ref{sec:fuzzy} we also prove several auxiliary results that we use later on.

In Section \ref{sec:3} we study fuzzy IFSs on multimetric spaces. In Subsection \ref{sec:conon} we define the hyperspace of ,,compact'' fuzzy sets $\KF(X)$ on multimetric space $(X,\D)$ and introduce the canonical topology $\Tau_\D^\F$ on it. This topology is induced by a family of fuzzy Hausdorff-type pseudometrics, which naturally extends the fuzzy Hausdorff-type metric considered for the metric case (see \cite{Cabrelli}, \cite{OS}), and is independent on the choice of an admissible multimetric. In Subsection \ref{sec:2.3} we define a fuzzy attractor of a fuzzy IFS on a multimetric space and prove (Theorem \ref{thm:main}) that compactly dominating fuzzy IFSs that are contracting w.r.t. some admissible multimetric generate fuzzy attractors. This result is one of the main results of the paper. In Subsection \ref{sec:Tych} we slightly reformulate the definition of a fuzzy attractor and adjusts it to IFSs on Tychonoff spaces. We also reformulate Theorem \ref{thm:main} in this spirit (Theorem \ref{thm:main3}) and give its corollary (Theorem \ref{thm:main4}) which states that topologically contracting fuzzy IFSs on Tychonoff $k$-spaces generate fuzzy attractors. This result is also one of the main results of the paper.

In Section \ref{sec:other} we consider the fuzzy IFS theory on Hausdorff topological spaces. In Subsection \ref{sec:hypo} we introduce and study the topology $\Tau_h$ induced by identifying ,,compact'' fuzzy sets with their hypographs as elements of the hyperspace of nonempty and compact subsets of $X\times[0,1]$. We prove that for Tychonoff spaces, this topology is weaker than the 
canonical topology considered previously. However, in Subsection \ref{sec:TFIFS} we prove (Theorem \ref{thm:main10}) that fuzzy IFSs on Hausdorff topological spaces that are topologically contracting,  admit the invariant ,,compact'' fuzzy set, which are limits of iterations of fuzzy operators w.r.t. the topology $\Tau_h$ (we call such set as weak fuzzy attractors). This result is also one of the main results of the paper. Then in Subsection \ref{sec:descr} we give descriptions of a (weak) fuzzy attractor, which involve its description for a metric case from \cite{Miculescu}, together with the fact that fuzzy attractors are natural extension of fuzzy attractors of restricted fuzzy IFSs.

Finally, in Section \ref{sec:5} we consider another topologies on $\KF(X)$. In Subsection \ref{sec:5.1} we define the topology $\Tau_{h_0}$ by identifying ,,compact'' fuzzy sets with an alternative version of hypographs. We show that $\Tau_{h_0}$ is weaker than $\Tau_h$, which implies that $\Tau_h$ is better than $\Tau_{h_0}$ from the perspective of convergence of iterations to a fuzzy attractor. On the other hand, we show some benefits when restricting to the topology $\Tau_{h_0}$. Finally, in Subsection \ref{sec:unif}, we show that the topologies of uniform convergence (i.e., generated by the $\ell_\infty$-metric) or of pointwise convergence, are not appropriate for our needs.

\section{Preliminaries and basic results}\label{sec:2}
If $(X,\Tau)$ is a topological space, then by $\K(X)$ we denote the family of all nonempty and compact subsets of $X$, endowed with the Vietoris topology $\Tau_V$. It is known (cf. \cite[3.12.27(a)]{En}) that $(\K(X),\Tau_V)$ is Hausdorff (and compact) iff $X$ is Hausdorff (and compact). 
If $(X,d)$ is a metric space, then we also endow the hyperspace $\K(X)$ with the Hausdorff (also called as the Hausdorff-Pompeiu) metric, denoted by $d_H$. It is also known (see \cite[4.5.23]{En}) that it generates the Vietoris topology and that $(X,d)$ is complete iff $(\K(X),d_H)$ is complete.
We also have the following lemma which seems to be a folklore, but we state a short proof for clarity.

\begin{lemma}\label{lem:subViet}
If $Y$ is a subspace of a topological space $X$, then the Vietoris topology on $\K(Y)$ is exactly the induced topology on $\K(Y)$ from $\K(X)$.
\end{lemma}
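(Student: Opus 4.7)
The plan is to use the standard subbase for the Vietoris topology and simply observe that the two restrictions compute the same thing. Recall that the Vietoris topology on $\K(X)$ has as subbase the family of sets
$$U^+ := \{K\in\K(X) : K\subseteq U\},\qquad U^- := \{K\in\K(X) : K\cap U\neq\emptyset\},$$
where $U$ ranges over open subsets of $X$, and similarly for $\K(Y)$ with $U$ replaced by an open subset $V$ of $Y$, which by definition of the subspace topology is of the form $V=U\cap Y$ for some $U$ open in $X$.

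First I would verify the key point for any $K\in\K(Y)$ and any $U$ open in $X$: since $K\subseteq Y$, we have the equivalences
$$K\subseteq U \iff K\subseteq U\cap Y,\qquad K\cap U\neq\emptyset \iff K\cap(U\cap Y)\neq\emptyset.$$
Thus $U^+\cap\K(Y)=(U\cap Y)^+$ and $U^-\cap\K(Y)=(U\cap Y)^-$, where on the right-hand side these are subbasic sets in the Vietoris topology on $\K(Y)$.

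This immediately gives both inclusions. On the one hand, the restriction to $\K(Y)$ of every subbasic Vietoris open set of $\K(X)$ is a subbasic Vietoris open set of $\K(Y)$, so the induced topology from $\K(X)$ is contained in the Vietoris topology on $\K(Y)$. Conversely, every subbasic open set of $\K(Y)$ of the form $V^+$ or $V^-$ with $V=U\cap Y$ arises as the restriction of $U^+$ or $U^-$, so the Vietoris topology on $\K(Y)$ is contained in the induced topology. Hence the two topologies coincide.

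There is no real obstacle here; the only thing to be careful about is the trivial but crucial use of the inclusion $K\subseteq Y$ when checking the two equivalences above, together with the fact that comparing two topologies via a subbase suffices (one need not argue at the level of basic Vietoris sets $\langle U_1,\dots,U_n\rangle$, though the same argument applies there verbatim).
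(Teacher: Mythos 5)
Your proof is correct and takes essentially the same approach as the paper: the paper's entire argument consists of the two identities $\{K\in\K(X):K\subset U\}\cap\K(Y)=\{K\in\K(Y):K\subset U\cap Y\}$ and $\{K\in\K(X):K\cap U\ne\emptyset\}\cap\K(Y)=\{K\in\K(Y):K\cap(U\cap Y)\neq\emptyset\}$, which are exactly your subbase computations. Your write-up just makes explicit the (correct) observation that matching subbases suffices to identify the two topologies.
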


\begin{proof} The result follows from the next two equalities holding for all open sets  $U\subseteq X$:     $$
     \begin{aligned}
    &\{K\in\K(X):K\subset U\}\cap\K(Y)=\{K\in\K(Y):K\subset U\cap Y\}\quad\mbox{and}\\
    &\{K\in\K(X):K\cap U\ne\emptyset\}\cap\K(Y)=\{K\in\K(Y):K\cap(U\cap Y)\neq \emptyset\}.
    \end{aligned}
    $$
    
\end{proof}

For a subset $C$ of a topological (or metric) space $X$, by $\overline{C}$ we will denote the closure of $C$. For a function $f:X\to X$ and $K\subset X$, by $f{\restriction}_{ K}$ we will denote the restriction of $f$ to the set $K$, and $f^{(n)}$ will stand for the $n$-fold composition of $f$, where $n\in\N:=\{1,2,3,..\}$. For a set $C\subset X$, by $\chi_C$ we denote the characteristic function of $C$, that is
$$
\chi_C(x)=\left\{\begin{array}{ccc}1&\mbox{if}&x\in C\\0&\mbox{if}&x\notin C\end{array}\right..
$$
If $C=\{c\}$ is a singleton, then $\chi_{\{c\}}$ is denoted by $\chi_c$ and is called the {\em Dirac function} of the point $c$.

\subsection{Iterated function systems and their attractors}\label{sec:ifs}
In a standard setting, iterated function systems and their attractors are considered for metric spaces. We will provide a wider approach - for Hausdorff topological spaces. 
\begin{definition}\label{def:2.1}\emph{
    Any system $\S=(f_i)_{i=1}^k$ of finitely many continuous selfmaps of a Hausdorff topological space $X$ is called an } iterated function system \emph{(IFS for short).}\\
    \emph{Any IFS $\S=(f_i)_{i=1}^k$ generates the so-called } Hutchinson operator\emph{, that is the map $\S:\K(X)\to\K(X)$ defined by}
    \[
        \S(K):=\bigcup_{i=1}^kf_i(K).
    \]
    \emph{A (necessarily unique) set $A_\S\in\K(X)$ is called} an attractor of an IFS $\S$\emph{ if the following conditions holds:
    \begin{itemize}
        \item[(1)] $A_\S=\S(A_\S)=\bigcup_{i=1}^{k}f_i(A_\S)$;
        \item[(2)] for every $K\in\K(X)$, the sequence of iterations $(\S^{(n)}(K))$ converges to $A_\S$ w.r.t. the Vietoris topology.
    \end{itemize}}
\end{definition}

\subsection{Metric contractivity and the Hutchinson--Barnsley theorem}\label{sec:HB}
We will introduce several notions of contractivity, among them the ones which generalize the standard Banach contractivity.

We say that a map $\varphi:[0,\infty)\to[0,\infty)$ is \emph{a comparison function}, if it is nondecreasing and for every $t\geq 0$, the sequence of iterations $\varphi^{(n)}(t)\to 0$.
\begin{definition}\label{def:contractions}\emph{
Assume that $(X,d)$ is a metric space and $f:X\to X$. We say that $f$ is} 
\begin{itemize}
    \item[$\bullet$] a Banach contraction, \emph{if its Lipschitz constant $\on{Lip}(f)<1$;}
    \item[$\bullet$] a Matkowski contraction, \emph{if for some comparison function $\varphi$, it holds
    \begin{equation}\label{matcontr}
      \forall_{x,y\in X}\;d(f(x),f(y))\leq \varphi(d(x,y));  
    \end{equation}
    the map $\varphi$ will be called as} a witness \emph{for Matkowski contractivity of $f$;}
    \item[$\bullet$] an Edelstein contraction, \emph{if 
   $$
    \forall_{x,y\in X,x\neq y}\;d(f(x),d(y))<d(x,y).
   $$
   }
\end{itemize}
\end{definition}
\begin{remark}\label{rem:contractions}\emph{
    Clearly, each Banach contraction is a Matkowski contraction, and, as a comparison function $\varphi$ necessarily satisfies $\varphi(t)<t$ for $t>0$, each Matkowski contraction is also an Edelstein contraction. Moreover, none of these implications can be reversed, yet if $X$ is compact, then the Edelstein contractivity is equivalent to the Matkowski contractivity.\\
    Let us also note that by the Matkowski fixed point theorem, each Matkowski contraction on a complete metric space satisfies the thesis of the Banach fixed point theorem. This result can be considered as one of the widest generalizations of the Banach's theorem, as Matkowski contractivity implies many others, like those due to Rakotch or Browder.\\
Finally, note that Edelstein contractivity is not sufficient for the existence of a fixed point - simply consider the map $x\to x+e^{-x}$, for $x\in X:=[0,\infty)$.\\
For a brief exposition of various types of contractivity and their mutual relationships, we refer the reader to the paper \cite{JJ}.}
\end{remark}

\begin{definition}
    \emph{Let $\S=(f_i)_{i=1}^k$ be an IFS on a metric space $(X,d)$. We say that $\S$ is }Matkowski [Banach, Edelstein] contracting\emph{,} if the maps $f_1,...,f_k$ are Matkowski [Banach, Edelstein, respectively] contractions.
\end{definition}

The classical Hutchinson theorem from early 1980s states that Banach contracting IFSs on complete metric spaces admit (necessarily unique) attractors. This result has been extended to weaker types of contractivity, in particular for Matkowski contracting IFSs (see, e.g., \cite{LSS}).

\begin{theorem}\label{MIFS}
    Each Matkowski contracting IFS on a complete metric space admits an attractor.
\end{theorem}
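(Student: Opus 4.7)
The plan is to apply the Matkowski fixed point theorem (mentioned in Remark \ref{rem:contractions}) to the Hutchinson operator $\S$, regarded as a selfmap of the hyperspace $(\K(X),d_H)$. The excerpt already records that $(X,d)$ complete implies $(\K(X),d_H)$ complete, and that $d_H$ generates the Vietoris topology on $\K(X)$. So the problem reduces to exhibiting a single comparison function $\varphi$ that witnesses Matkowski contractivity of $\S$ on $(\K(X),d_H)$.

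First I would construct a common witness for $f_1,\dots,f_k$. Starting from Matkowski witnesses $\varphi_1,\dots,\varphi_k$, the natural candidate is $\varphi:=\max_{1\le i\le k}\varphi_i$, which is plainly nondecreasing. Every Matkowski witness satisfies $\varphi_i(t)<t$ for $t>0$ (otherwise monotonicity would give $\varphi_i^{(n)}(t_0)\ge t_0$ for all $n$, contradicting $\varphi_i^{(n)}(t_0)\to 0$), so the same is true of $\varphi$. The remaining point is to verify $\varphi^{(n)}(t)\to 0$. I would handle this by first replacing each $\varphi_i$ by its upper right-continuous majorant (a standard device in Matkowski theory which preserves both \eqref{matcontr} and monotonicity); the iterates $\varphi^{(n)}(t)$ then form a nonincreasing sequence, and right-continuity together with $\varphi(s)<s$ forces their limit to be $0$.

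Second, I would verify the Matkowski inequality for $\S$. Fix $K,L\in\K(X)$ and $i\in\{1,\dots,k\}$. Since $d(x,L)\le d_H(K,L)$ for every $x\in K$ and since $\varphi_i$ is nondecreasing,
$$\sup_{x\in K}d(f_i(x),f_i(L))\le\sup_{x\in K}\varphi_i(d(x,L))\le\varphi_i(d_H(K,L))\le\varphi(d_H(K,L)),$$
and symmetrically $\sup_{y\in L}d(f_i(K),f_i(y))\le\varphi(d_H(K,L))$, so $d_H(f_i(K),f_i(L))\le\varphi(d_H(K,L))$. Combining this with the elementary bound $d_H\bigl(\bigcup_i A_i,\bigcup_i B_i\bigr)\le\max_i d_H(A_i,B_i)$ yields $d_H(\S(K),\S(L))\le\varphi(d_H(K,L))$, as required.

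Matkowski's theorem then delivers a unique fixed point $A_\S\in\K(X)$ with $\S(A_\S)=A_\S$, which is condition (1) of Definition \ref{def:2.1}, together with $d_H$-convergence $\S^{(n)}(K)\to A_\S$ for every $K\in\K(X)$, which is condition (2) thanks to the coincidence of $d_H$-convergence and Vietoris convergence. The only genuinely delicate step is the first one: making sure that the common witness $\varphi$ really is a comparison function in the sense of the paper; everything else is bookkeeping with the Hausdorff metric.
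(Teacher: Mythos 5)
Your overall strategy is exactly the one the paper intends: Remark \ref{rem:maxvar} records precisely this plan (form the common witness $\varphi=\max\{\varphi_1,\dots,\varphi_k\}$, check that the Hutchinson operator is a Matkowski contraction on the complete space $(\K(X),d_H)$ with witness $\varphi$, and invoke the Matkowski fixed point theorem, the identification of $d_H$-convergence with Vietoris convergence giving condition (2) of Definition \ref{def:2.1}). Your Hausdorff-metric estimates in the second step are correct and are essentially Lemma \ref{lem:5.15} together with the bound $d_H\bigl(\bigcup_i A_i,\bigcup_i B_i\bigr)\le\max_i d_H(A_i,B_i)$.

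However, the step you yourself single out as delicate contains a genuine error. The upper right-continuous majorant $\tilde\varphi(t):=\inf_{s>t}\varphi(s)$ of a comparison function need not satisfy $\tilde\varphi(t)<t$, so the regularization can destroy the very property your argument then relies on. Concretely, let $\varphi(t)=t/2$ for $t\in[0,1]$ and $\varphi(t)=1$ for $t>1$: this is nondecreasing with $\varphi^{(n)}(t)\to 0$ for every $t$, yet $\tilde\varphi(1)=\inf_{s>1}\varphi(s)=1$, hence $\tilde\varphi^{(n)}(1)=1$ for all $n$ and $\tilde\varphi$ is not a comparison function. The statement you actually need --- that $\varphi=\max\{\varphi_1,\dots,\varphi_k\}$ is again a comparison function, which is the first assertion of Remark \ref{rem:maxvar} --- is true, but it requires a direct argument, for instance the following. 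The iterates $t_n=\varphi^{(n)}(t_0)$ decrease to some $L\ge 0$ because each $\varphi_i(t)<t$ for $t>0$; if $L>0$, then every $t_n>L$, and for each $t>L$ one can pick $n$ with $L<t_n<t$ and get $\varphi(t)\ge\varphi(t_n)=t_{n+1}>L$. Thus the sets $A_i=\{t>L:\varphi_i(t)>L\}$ are finitely many up-sets (by monotonicity of $\varphi_i$) covering $(L,\infty)$, so some $A_i$ has infimum $L$ and therefore equals $(L,\infty)$; for that index, $\varphi_i^{(n)}(t_0)>L$ for all $n$, contradicting $\varphi_i^{(n)}(t_0)\to 0$. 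With this repair your proof is complete and coincides with the paper's.
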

\begin{remark}\label{rem:maxvar}\emph{The proof of Theorem \ref{MIFS} can be based on the facts that if $\varphi_i$, $i=1,...,k$, are comparison functions, then $\varphi:=\max\{\varphi_i:i=1,...,k\}$ is also a comparison function, and if maps of the underlying IFS $\S$ are Matkowski contractions with a common witness $\varphi$, then the Hutchinson operator $\S:\K(X)\to\K(X)$ is a Matkowski contraction with a witness $\varphi$.}
\end{remark}

\subsection{Topological contractivity}\label{sec:TC}
Now let us recall the notion of topological contractivity, introduced by Mihail in \cite{M1}.

For a natural number $k\in\N$, denote by $\Sigma_k$ the family of all sequences of numbers from $\{1,...,k\}$, that is, $\Sigma_k=\{1,...,k\}^\N$.

\begin{definition}\emph{
    Let $X$ be a Hausdorff topological space and $\S=(f_i)_{i=1}^k$ be an IFS on $X$.\\
    We say that $\S$ is }compactly dominating \emph{, if for every $K\in\K(X)$, there exists $D\in\K(X)$ such that $K\subset D$ and $\S(D)\subset D$.\\
    We say that $\S$ is }topologically contracting \emph{, if it is compactly dominating and for every $D\in\K(X)$ with $\S(D)\subset D$ and every sequence $(\sigma_n)\in\Sigma_k$, the set
    \begin{equation}\label{eq:wee}
       \bigcap_{n\in\N}f_{\sigma_1}\circ...\circ f_{\sigma_n}(D) 
    \end{equation}
    is a singleton.
    }
\end{definition}

\begin{remark}\label{rem:proj}\emph{
It is easy to see that each Matkowski contracting IFS on a complete metric space is topologically contracting. Compact dominancy follows from the existence of an attractor for such an IFS, as for each $K\in\K(X)$, the set 
\[
    D:=K\cup \bigcup_{n\in\N}\S^{(n)}(K)\cup A_\S=\overline{K\cup \bigcup_{n\in\N}\S^{(n)}(K)}
\]
is compact and $\S(D)\subset D$.\\
The intersection (\ref{eq:wee}) is singleton as the intersection of decreasing sequence of compact sets with diameters tending to zero.}
\end{remark}
\begin{remark}\label{rem:prof2}
    \emph{Note that if $\S=(f_i)_{i=1}^k$ is topologically contracting, then
    the intersection (\ref{eq:wee}) does not depend on the choice of $D$. Denoting its unique element by $\pi(\sigma)$, where $\sigma=(\sigma_n)$, we define the map $\pi:\Sigma_k\to X$ which is called the } projection map.\emph{ It is an essential tool in studying the structure of IFS attractors. In this context, the space $\Sigma_k$ is called } a code space\emph{ or }an address space\emph{. The details can be found, for example in \cite{LSS} or in} other classical books on the IFS theory.
\end{remark}

Mihail \cite{M1} proved that topologically contracting IFSs generate attractors, which, as observed in \cite{BKNNS}, are metrizable.
\begin{theorem}\label{TIFS}(\cite[Theorem 3.1]{M1}, \cite[p. 1031]{BKNNS})\\
Each topologically contracting IFS $\S$ on a Hausdorff  space $X$ admits an attractor, which is a metrizable subset of $X$.
\end{theorem}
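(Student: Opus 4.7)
The plan is to fix any $K\in\K(X)$, invoke compact dominancy to choose $D\in\K(X)$ with $K\subset D$ and $\S(D)\subset D$, and set $A_\S:=\bigcap_{n\in\N}\S^{(n)}(D)$. As a decreasing intersection of nonempty compact subsets of the Hausdorff space $X$, $A_\S$ is nonempty and compact. The first step is to identify $A_\S$ with $\pi(\Sigma_k)$, where $\pi$ is the projection map from Remark \ref{rem:prof2}. The inclusion $\pi(\Sigma_k)\subset A_\S$ is immediate from $\pi(\sigma)\in f_{\sigma_1}\circ\cdots\circ f_{\sigma_n}(D)\subset\S^{(n)}(D)$ for every $n$. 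For the reverse inclusion, given $x\in A_\S$ I would run a K\"onig-type argument on the finitely-branching tree of finite words $\tau$ satisfying $x\in f_{\tau_1}\circ\cdots\circ f_{\tau_{|\tau|}}(D)$: every level is nonempty because $x\in\S^{(n)}(D)=\bigcup_{\tau\in\{1,\dots,k\}^n}f_{\tau_1}\circ\cdots\circ f_{\tau_n}(D)$, and the tree property follows from $\S(D)\subset D$. Any infinite branch $\sigma$ yields $x\in\bigcap_n f_{\sigma_1}\circ\cdots\circ f_{\sigma_n}(D)=\{\pi(\sigma)\}$.

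Invariance $\S(A_\S)=A_\S$ follows easily. The inclusion $\S(A_\S)\subset A_\S$ comes from $A_\S\subset\S^{(n)}(D)$, giving $\S(A_\S)\subset\S^{(n+1)}(D)$ for each $n$; the opposite inclusion uses the identity $\pi((\sigma_1,\sigma_2,\dots))=f_{\sigma_1}(\pi((\sigma_2,\sigma_3,\dots)))$, which one checks directly from the singleton property, yielding $\pi(\Sigma_k)=\bigcup_{i=1}^k f_i(\pi(\Sigma_k))$. To verify $\S^{(n)}(K)\to A_\S$ in the Vietoris topology I would check the two standard subbasic conditions. For the upper condition, any net $(x_\alpha)$ with $x_\alpha\in\S^{(n_\alpha)}(K)$, $n_\alpha\to\infty$, converging to $x$ eventually lies in $\S^{(m)}(D)$ for each fixed $m$, so by closedness $x\in A_\S$. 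For the lower condition, given $a=\pi(\sigma)\in A_\S$ and any fixed $x_0\in K$, the sequence $y_n:=f_{\sigma_1}\circ\cdots\circ f_{\sigma_n}(x_0)\in\S^{(n)}(K)$ has every cluster point forced into $\bigcap_m f_{\sigma_1}\circ\cdots\circ f_{\sigma_m}(D)=\{a\}$ by the singleton hypothesis; since all $y_n$ lie in the compact Hausdorff set $D$, this forces $y_n\to a$, so every neighborhood of $a$ eventually meets $\S^{(n)}(K)$. Uniqueness of the attractor and independence of $D$ come for free, as any other invariant compact set $A'$ satisfies $\S^{(n)}(A')=A'$ while the convergence already proved forces $\S^{(n)}(A')\to A_\S$.

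For the metrizability claim, I would show continuity of $\pi\colon\Sigma_k\to A_\S$: if $\sigma^{(m)}\to\sigma$ in the product topology, then for every $j$ the values $\pi(\sigma^{(m)})$ eventually lie in $f_{\sigma_1}\circ\cdots\circ f_{\sigma_j}(D)$, so again every cluster point in the compact $D$ is forced to equal $\pi(\sigma)$. Since $\Sigma_k=\{1,\dots,k\}^\N$ is compact and metrizable, $A_\S=\pi(\Sigma_k)$ is a compact Hausdorff continuous image of a compact metrizable space, and the classical Hausdorff--Alexandroff theorem delivers the metrizability. I expect the main obstacle to be the ``nonmetric'' convergence arguments: in a general Hausdorff space one must work with nets or cluster points of sequences and lean consistently on the singleton property $\bigcap_n f_{\sigma_1}\circ\cdots\circ f_{\sigma_n}(D)=\{\pi(\sigma)\}$ together with the compactness of $D$ to replace the Cauchy/completeness reasoning familiar from the metric setting.
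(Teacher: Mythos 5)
The paper does not actually prove this theorem --- it is quoted from Mihail \cite{M1} and from \cite{BKNNS} --- but your argument is correct and is essentially the standard proof from those sources: realize the attractor as $A_\S=\bigcap_{n}\S^{(n)}(D)=\pi(\Sigma_k)$, verify invariance and Vietoris convergence via the singleton property and compactness of $D$, and deduce metrizability from the continuity of $\pi$ on the compact metrizable code space $\Sigma_k$. The only cosmetic points: the upper Vietoris condition still needs the explicit compactness contradiction (if $\S^{(n)}(K)\not\subset U$ frequently, the offending points lie in the compact set $D\setminus U$ and hence cluster there, contradicting your net claim), and the metrization fact you invoke is usually stated as ``a Hausdorff continuous image of a compact metrizable space is metrizable'' --- the Hausdorff--Alexandroff theorem proper is the converse-type statement about continuous images of the Cantor set.
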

Due to the previous remark, this result is an extension of Theorem \ref{MIFS}. However, it turns out that if $X$ is metrizable, then Theorems \ref{MIFS} and \ref{TIFS} are in some sense equivalent, as the existence of attractors of topologically contracting IFSs can be fully explained by Theorem \ref{MIFS} according to remetrization techniques presented in \cite{BKNNS} and \cite{MM}. In Section \ref{sec:multimetricIFS} we will formulate a more general result on these issues.

\subsection{Multimetric spaces}\label{sec:Mult}
In this section we recall the pseudometric approach for the IFS theory, introduced in \cite{BKNNS}.\\
Recall that a \emph{pseudometric} $d$ on a set $X$ is a map that satisfies the conditions of a metric but without the full first one; that is, for $x,y,z\in X$, we have $d(x,x)=0$, $d(x,y)=d(y,x)$ and $d(x,y)\leq d(x,z)+d(z,y)$. Note that each pseudometric $d$ is nonnegative, which follows from the triangle inequality and symmetry: $0=d(x,x)\le d(x,y)+d(y,x)=2d(x,y)$.

\begin{definition}
    \emph{By a} multipseudometric on a set $X$\emph{ we mean  a family $\D$ of pseudometrics on $X$. It is called} a multimetric,\emph{ if additionally for every distinct $x,y\in X$, we find $d\in\D$ so that $d(x,y)>0$.
    Then the pair $(X,\D)$ is called} a multimetric space.
    \emph{A multimetric $\D$ is said to be} directed\emph{, if for every nonempty and finite $\G\subset\D$, we find $d\in\D$ such that $d\geq\max \G$.}
    
    \emph{Each multipseudometric $\D$ on $X$ generates the canonical topology $\Tau_\D$ consisting of all sets $U\subset X$ so that for every $x\in U$, there exists $\varepsilon>0$ and a finite family $\G\subset \D$ such that}
    \[
        B_\G(x,\varepsilon):=\bigcap_{d\in\G}B_d(x,\varepsilon)=\{y\in X:\forall_{d\in \G}\;d(x,y)<\varepsilon\}\subset U,
    \]
    \emph{where $B_d(x,\varepsilon):=\{y\in X:d(x,y)<\varepsilon\}$. By the triangle inequality, each set $B_d(x,\varepsilon)$ is open. Moreover, it is easy to see that $\Tau_\D$ is the coarsest topology in which all pseudometrics from $\D$ are continuous.}
\end{definition}
\begin{remark}\label{rem:multimetric}
    \emph{
    Note that the topology $\Tau_\D$ generated by a multimetric is Tychonoff (in particular, Hausdorff), as each multimetric generates a natural uniformity, and conversely, each Tychonoff topology is generated by some multimetric $\D$ (see \cite[Section 8.1]{En}). Thus, discussing topological spaces $\Tau_\D$ restricts the framework to Tychonoff spaces. Moreover, each compact Hausdorff space is Tychonoff, so each compact Hausdorff topology is generated by some multimetric. Finally, it is clear that if $\D$ is an admissible multimetric on a topological space $X$ and $\emptyset\neq K\subset X$, then the family of restrictions $\D_K:=\{d{\restriction}_{ K\times K}:d\in\D\}$ is an admissible multimetric on topological subspace $K$.}
    
\end{remark}
It turns out (see \cite[p. 1038]{BKNNS}) that each multimetric $\D$ on a set $X$ generates the multipseudometric $\D_H=\{d_H:d\in\D\}$ on the hyperspace $\K(X)$ by
\begin{equation}\label{eq:fdhh}
    d_H(K,D):=\max\{\max_{x\in K}\min_{y\in D}d(x,y),\max_{y\in D}\min_{x\in K}d(x,y)\}.
\end{equation}
\begin{theorem}\label{directed multimetrics}(\cite[p. 1038 and Proposition 3.2]{BKNNS})
    \begin{itemize}
        \item[(1)] If $\D$ is a multimetric, then
        $
            \tilde{\D}:=\{\max\G:\G\subset\D\mbox{ is nonempty and finite}\}
        $
        is a directed multimetric and $\Tau_{\D}=\Tau_{\tilde{\D}}$.
        \item[(2)] If $\D$ is a directed multimetric, then $\D_H$ is a multimetric and generates the Vietoris topology on $\K(X)$.
    \end{itemize}
\end{theorem}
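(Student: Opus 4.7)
First I would check that $\max\G$ is actually a pseudometric for every nonempty finite $\G\subset\D$: symmetry and vanishing on the diagonal are immediate, and the triangle inequality follows since for each $d\in\G$ we have $d(x,y)\le d(x,z)+d(z,y)\le \max\G(x,z)+\max\G(z,y)$, then take the max over $d\in\G$. Directedness of $\tilde\D$ is then transparent, because for $\G_1,\dots,\G_n\subset\D$ finite, the element $\max(\G_1\cup\dots\cup\G_n)\in\tilde\D$ dominates each $\max\G_i$. Since $\D\subset\tilde\D$ (take singletons $\G=\{d\}$), the separation property of $\D$ passes to $\tilde\D$, so $\tilde\D$ is a directed multimetric. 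For the equality $\Tau_\D=\Tau_{\tilde\D}$ I would observe that $\D\subset\tilde\D$ trivially gives $\Tau_\D\subseteq\Tau_{\tilde\D}$, while the key identity
\[
B_{\max\G}(x,\varepsilon)=\bigcap_{d\in\G}B_d(x,\varepsilon)=B_\G(x,\varepsilon)
\]
shows that every basic $\Tau_{\tilde\D}$-neighborhood of $x$ is already a basic $\Tau_\D$-neighborhood.

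\textbf{Plan for part (2), the pseudometric and separation properties.} Each $d_H$ from formula (\ref{eq:fdhh}) is a pseudometric on $\K(X)$ by the standard Hausdorff--Pompeiu argument (the compactness of $K,D$ legitimizes the use of min/max). To prove that $\D_H$ separates distinct $K,L\in\K(X)$, I would pick (without loss of generality) a point $x\in K\setminus L$. Since $L$ is compact and $\Tau_\D$ is Hausdorff, $L$ is closed, so $X\setminus L$ is $\Tau_\D$-open and contains $x$; hence there exists a finite $\G\subset\D$ and $\varepsilon>0$ with $B_\G(x,\varepsilon)\cap L=\emptyset$. Directedness of $\D$ now produces a single $d\in\D$ with $d\ge\max\G$, whence $d(x,y)\ge\varepsilon$ for every $y\in L$, and consequently $d_H(K,L)\ge \varepsilon>0$.

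\textbf{Plan for part (2), the Vietoris identification.} I would prove $\Tau_V=\Tau_{\D_H}$ by showing each subbasic open set of one topology is open in the other. For $K_0\in\{K:K\subset U\}$ with $U$ open, I cover $K_0$ by finitely many balls $B_{d_i}(x_i,\varepsilon_i/2)$ whose doubles lie in $U$; directedness yields a single $d\in\D$ dominating $\max\{d_1,\dots,d_n\}$, and then any $K$ with $d_H(K_0,K)<\min_i\varepsilon_i/2$ satisfies $K\subset U$ by a triangle-inequality argument. For $K_0\in\{K:K\cap U\ne\emptyset\}$, I pick $x\in K_0\cap U$, a pseudometric $d\in\D$ and $\varepsilon>0$ with $B_d(x,\varepsilon)\subset U$; then $d_H(K_0,K)<\varepsilon$ forces $K\cap U\ne\emptyset$. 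Conversely, to see that each $\D_H$-ball $\{K:d_H(K_0,K)<\varepsilon\}$ is Vietoris-open at $K_0$, I cover $K_0$ by finitely many $B_d(x_i,\varepsilon/2)$ and use the intersection
\[
\{K:K\subset U_\varepsilon^d\}\cap\bigcap_{i}\{K:K\cap B_d(x_i,\varepsilon)\ne\emptyset\},
\]
where $U_\varepsilon^d=\{y:d(y,K_0)<\varepsilon\}$ is $\Tau_d$-open, to produce a Vietoris-open neighborhood of $K_0$ contained in the given $d_H$-ball.

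\textbf{Where the work concentrates.} Part (1) is essentially formal. In part (2), the only place where directedness of $\D$ is genuinely used is the separation step and the first inclusion (``$K\subset U$'') in the Vietoris direction, where one needs to collapse finitely many pseudometrics into one. The main technical obstacle is verifying the final inclusion $\{K:d_H(K_0,K)<\varepsilon\}\supseteq \mathcal{V}$ for the Vietoris-open set $\mathcal{V}$ constructed above, i.e. checking that both ``sup--inf'' quantities in (\ref{eq:fdhh}) are bounded by $\varepsilon$; this is a standard covering argument but is the place where one must be careful with the two-sided nature of $d_H$.
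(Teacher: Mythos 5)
The paper does not prove this theorem at all: it is imported verbatim from \cite{BKNNS} (p.~1038 and Proposition 3.2 there), so there is no in-paper proof to compare against. Your plan supplies the standard argument and it is essentially correct. Part (1) is fine, including the key identity $B_{\max\G}(x,\varepsilon)=B_\G(x,\varepsilon)$, and your separation argument in part (2) correctly isolates the one place where directedness is indispensable (collapsing the finite family $\G$ with $B_\G(x,\varepsilon)\cap L=\emptyset$ into a single $d\ge\max\G$, so that $d(x,\cdot)\ge\varepsilon$ on $L$); this is exactly the point where Remark \ref{rem:directa} says the theorem fails for non-directed multimetrics. The only concrete flaw is a constant mismatch in the last step: covering $K_0$ by balls $B_d(x_i,\varepsilon/2)$ but only requiring $K\cap B_d(x_i,\varepsilon)\ne\emptyset$ yields, for $x\in K_0$, merely $\min_{y\in K}d(x,y)<\varepsilon/2+\varepsilon=3\varepsilon/2$, so the Vietoris-open set you build lands in the $\tfrac{3\varepsilon}{2}$-ball rather than the $\varepsilon$-ball. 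Replace the hitting sets by $\{K:K\cap B_d(x_i,\varepsilon/2)\ne\emptyset\}$ (or start from $\varepsilon/3$) and the inclusion closes; with that repair the plan is a complete and correct proof.
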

\begin{remark}\label{rem:directa}
\emph{Let us note that the assumption that $\D$ is directed is important. According to \cite[Example 3.1]{BKNNS}), there is a nondirected multimetric $\D$ that generates the Euclidean topology on $X=\R^2$ such that $\Tau_{\D_H}$ is not the Vietoris topology.}
\end{remark}
We end this section with two lemmas that will be useful later on.
\begin{lemma}\label{lem:mult1}
Assume that $\D,\G$ are directed multimetrics on the same set $X$. The following conditions are equivalent:
    \begin{itemize}
            \item[(i)] $\Tau_\D\subset\Tau_\G$.
            \item[(ii)] $\forall_{x\in X}\;\forall_{d\in \D}\;\forall_{\varepsilon>0}\;\exists_{g\in\G}\;\exists_{\delta>0}\;B_g(x,\delta)\subset B_d(x,\varepsilon)$.
    \end{itemize}
\end{lemma}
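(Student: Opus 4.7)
The plan is to exploit the directedness assumption to show that, for a directed multimetric, the neighborhoods of a point can be taken as single balls $B_d(x,\varepsilon)$ rather than finite intersections of such. This reduction makes both implications into short unpackings of the definition of inclusion of topologies.

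More precisely, the key preliminary observation I would record is: if $\D$ is directed and $\G'\subset\D$ is nonempty and finite, then choosing $d\in\D$ with $d\ge\max\G'$ gives $B_d(x,\varepsilon)\subset B_{\G'}(x,\varepsilon)$, because $d(x,y)<\varepsilon$ forces $d'(x,y)<\varepsilon$ for every $d'\in\G'$. Consequently the family $\{B_d(x,\varepsilon):d\in\D,\varepsilon>0\}$ is a neighborhood base at $x$ in the topology $\Tau_\D$ (and likewise for $\Tau_\G$).

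For the implication $(ii)\Rightarrow(i)$, take $U\in\Tau_\D$ and $x\in U$. By definition of $\Tau_\D$ there exist a finite $\G'\subset\D$ and $\varepsilon>0$ with $B_{\G'}(x,\varepsilon)\subset U$; by the preliminary observation applied to $\D$, one gets a single $d\in\D$ with $B_d(x,\varepsilon)\subset U$. Now $(ii)$ supplies $g\in\G$ and $\delta>0$ with $B_g(x,\delta)\subset B_d(x,\varepsilon)\subset U$, witnessing that $U\in\Tau_\G$.

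For the converse $(i)\Rightarrow(ii)$, fix $x\in X$, $d\in\D$ and $\varepsilon>0$. The ball $B_d(x,\varepsilon)$ lies in $\Tau_\D\subset\Tau_\G$, so by definition of $\Tau_\G$ there exist a finite $\G'\subset\G$ and $\delta>0$ with $B_{\G'}(x,\delta)\subset B_d(x,\varepsilon)$. Applying the preliminary observation to the directed multimetric $\G$, we pick $g\in\G$ with $g\ge\max\G'$ and obtain $B_g(x,\delta)\subset B_{\G'}(x,\delta)\subset B_d(x,\varepsilon)$, which is exactly $(ii)$. I do not anticipate any genuine obstacle; the only subtlety is remembering to invoke directedness on both sides, since without it one is forced to keep finite intersections of balls in $(ii)$ and the simple form stated would fail (as Remark \ref{rem:directa} already warns).
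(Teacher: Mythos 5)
Your proposal is correct and follows essentially the same route as the paper: the paper's proof consists precisely of the observation that directedness makes the single balls $\{B_d(x,\varepsilon):d\in\D,\varepsilon>0\}$ a neighborhood base at each point, after which both implications are routine. You have simply written out the details that the paper leaves to the reader.
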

\begin{proof}
    The result follows from a simple fact that if $\D$ is a directed multimetric, then for each $x\in X$, the family $\{B_d(x,\varepsilon):d\in\D,\varepsilon>0\}$ is a base at the point $x$.
\end{proof}

\begin{lemma}\label{lem: unversal pseudometric for compact set}
        Let $\D,\G$ be directed multimetrics on the same set $X$. If $\Tau_\D = \Tau_\G$ and $K\subset X$ is compact, then 
        \[
            \forall_{d\in\D}\;\forall_{\varepsilon>0}\;\exists_{g\in\G}\;\exists_{\delta>0} \;\forall_{x\in K}\;\  B_g(x, \delta)\subset B_d(x,\varepsilon).
        \]
    \end{lemma}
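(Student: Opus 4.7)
The plan is to promote the pointwise statement of Lemma \ref{lem:mult1} to a uniform one on the compact set $K$ via a standard open-cover argument, while exploiting the directedness of $\G$ to collapse finitely many ``locally good'' pseudometrics into a single member of $\G$.

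First, I fix $d\in\D$ and $\varepsilon>0$. Since $\Tau_\D=\Tau_\G$ gives $\Tau_\D\subset\Tau_\G$, Lemma \ref{lem:mult1} applied at each $x\in K$ (with target radius $\varepsilon/2$) yields $g_x\in\G$ and $\delta_x>0$ with
\[
B_{g_x}(x,\delta_x)\subset B_d(x,\varepsilon/2).
\]
The family $\{B_{g_x}(x,\delta_x/2):x\in K\}$ is then an open cover of $K$ in $\Tau_\G=\Tau_\D$, so by compactness it admits a finite subcover corresponding to points $x_1,\ldots,x_n\in K$. Directedness of $\G$ furnishes some $g\in\G$ with $g\ge \max_i g_{x_i}$; I then set $\delta:=\min_i \delta_{x_i}/2>0$.

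The verification that this $g$ and $\delta$ work is a double triangle-inequality: given $x\in K$, pick $i$ with $g_{x_i}(x,x_i)<\delta_{x_i}/2$; then already $x\in B_{g_{x_i}}(x_i,\delta_{x_i})\subset B_d(x_i,\varepsilon/2)$, so $d(x,x_i)<\varepsilon/2$. For any $y\in B_g(x,\delta)$, the inequality $g\ge g_{x_i}$ gives
\[
g_{x_i}(x_i,y)\le g_{x_i}(x_i,x)+g_{x_i}(x,y)<\delta_{x_i}/2+\delta\le \delta_{x_i},
\]
hence $y\in B_d(x_i,\varepsilon/2)$, and finally $d(x,y)\le d(x,x_i)+d(x_i,y)<\varepsilon$, i.e.\ $y\in B_d(x,\varepsilon)$.

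I do not foresee a genuine obstacle: this is essentially the classical ``uniform continuity on a compact set'' pattern. The only thing requiring care is the bookkeeping of the two halvings of radii, so that the two separate triangle-inequality applications (one for the chosen $g_{x_i}\in\G$, one for $d\in\D$) both close with room to spare. It is worth noting that only directedness of $\G$ is used (to merge finitely many $g_{x_i}$ into one $g$); the multimetric $\D$ plays a passive role, consistent with how the two families appear in the statement.
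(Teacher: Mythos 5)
Your proof is correct and follows essentially the same route as the paper's: apply Lemma \ref{lem:mult1} pointwise with radius $\varepsilon/2$, extract a finite subcover of half-radius balls, merge the finitely many $g_{x_i}$ via directedness, and close with the same two triangle inequalities. The bookkeeping of the halved radii matches the paper's argument exactly.
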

    \begin{proof}
        Fix $d\in \D$ and $\varepsilon>0$. By the previous lemma, for every $x\in K$, there exist $g_x\in\G$ and $\delta_x>0$ such that
        \begin{equation}\label{aa22}
            B_{g_x}(x,\delta_x)\subset B_d\big(x,\frac{\varepsilon}{2}\big).
        \end{equation}
        Since the family $\{B_{g_x}(x,\frac{\delta_x}{2}):\ x\in K\}$ is an open cover of $K$ and $K$ is compact, we can choose a finite subcover $B_{g_{x_i}}(x_i, \frac{\delta_{x_i}}{2}):i=1,...,k$ of $K$. Since $\G$ is directed, we can pick $g\in \G$  with $g\geq\max\{g_{x_i}:i=1,...,k\}$. Fix $\delta := \frac{1}{2}\min \{{\delta_{x_i}}:i=1,...,k\}$. We will show that for any $z\in K$,
        \[
            B_g(z,\delta)\subset B_d(z,\varepsilon).
        \]
        Take any $z\in K$. Then there exists $i=1,...,k$ such that $z\in B_{g_{x_i}}\big(x_i, \frac{\delta_{x_i}}{2}\big)$. In particular, by (\ref{aa22}) we have $d(z, x_i) < \frac{\varepsilon}{2}$. Let $y\in B_g(z,\delta)$. Then
        \begin{align*}
            g_{x_i}(y,x_i)\leq g_{x_i}(y, z) + g_{x_i}(z, x_i) < g(y,z) + \frac{\delta_{x_i}}{2}<
             \delta + \frac{\delta_{x_i}}{2}\leq \frac{\delta_{x_i}}{2} + \frac{\delta_{x_i}}{2} = \delta_{x_i}.
        \end{align*}
        Hence, $y\in B_{g_{x_i}}\big(x_i, \delta_{x_i}\big)$. Again by (\ref{aa22}), we have
        \[
            d(z,y) \leqslant d(z,x_i) + d(x_i, y) < \frac{\varepsilon}{2} + \frac{\varepsilon}{2} = \varepsilon,
        \]
        and consequently $y\in B_d(z, \varepsilon)$.
    \end{proof}

\subsection{Multimetric contractivity vs topological contractivity}\label{sec:multimetricIFS}

If $d$ is a pseudometric, then we can consider the same types of contractivity as in Definition \ref{def:contractions} with respect to $d$, that is, Banach, Matkowski and Edelstein contractivity (see \cite[Definition 4.1]{BKNNS}). The difference is that the Edelstein contractivity should be defined in the following way:
\[
    \forall_{x,y\in X}\;d(f(x),f(y))\leq d(x,y)\;\;\mbox{and}\;\;\forall_{x,y\in X,d(x,y)>0}\;d(f(x),f(y))<d(x,y)
\]
(above we clarify a bit the formulation given in \cite{BKNNS}).
The relationships between these notions for pseudometrics are the same as for metrics indicated in Remark \ref{rem:contractions}, that is, Banach contractivity implies Matkowski contractivity and Matkowski contractivity implies Edelstein contractivity, which is equivalent to Matkowski contractivity if the space $X$ is compact and pseudometric is continuous, see \cite[Proposition 4.2]{BKNNS}.

If $(X,\D)$ is a multimetric space, then we can consider iterated function systems on $X$ as system $\S=(f_i)_{i=1}^k$ of selfmaps of $X$ which are continuous w.r.t. the topology $\Tau_\D$. It is easy to see that if $f:X\to X$ is nonexpansive w.r.t. to all pseudometrics $d\in\D$ (that is, $d(f(x),f(y))\leq d(x,y)$ for each $x,y\in X$), then $f$ is continuous. Hence systems of such maps are IFSs in the sense of Definition \ref{def:2.1}.

\begin{definition}[{\cite[Definition 5.1 and a comment after it]{BKNNS}}]
\emph{Let $(X,\D)$ be a multimetric space. 
We say that an IFS $\S=(f_i)_{i=1}^k$ is } Matkowski \emph{[}Banach, Edelstein\emph{]} contracting, \emph{if for every $d\in \D$, each $f_i$, $i=1,...,k$, is a Matkowski [Banach, Edelstein, respectively] contraction w.r.t. $d$.} 
\end{definition}

\begin{lemma}\label{rem:2.6}
If $\S$ is Matkowski [Banach, Edelstein] contracting w.r.t. a multimetric $\D$, then it is also Matkowski [Banach, Edelstein, respectively] contracting w.r.t. the directed multimetric $$\tilde{\D}:=\{\max\G:\G\subset \D,\;\G\;\mbox{nonempty and finite}\}.$$
\end{lemma}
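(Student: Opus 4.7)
My plan is to fix an arbitrary element $\tilde{d}=\max\G\in\tilde\D$, where $\G\subset\D$ is a nonempty finite subfamily, fix one map $f_i$ of the IFS, and, in each of the three contractivity regimes, assemble a $\tilde d$-witness out of the $d$-witnesses for $d\in\G$. The unifying observation is that the maximum over a \emph{finite} index set preserves the inequalities at hand, and that every witness (Lipschitz constant, comparison function, or strict contractivity) behaves well under the trivial bound $d\le\tilde d$ for every $d\in\G$.

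\textbf{Banach and Matkowski cases.} The Banach case is a one-liner: if $L_{i,d}:=\on{Lip}(f_i)<1$ with respect to each $d\in\D$, then
\[\tilde d(f_i(x),f_i(y))=\max_{d\in\G}d(f_i(x),f_i(y))\le\Big(\max_{d\in\G}L_{i,d}\Big)\,\tilde d(x,y),\]
and $\max_{d\in\G}L_{i,d}<1$ by finiteness of $\G$. For the Matkowski case, pick a comparison-function witness $\varphi_{i,d}$ of $f_i$ with respect to each $d\in\G$ and put $\tilde\varphi_i:=\max_{d\in\G}\varphi_{i,d}$. By Remark~\ref{rem:maxvar}, $\tilde\varphi_i$ is again a comparison function. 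Using monotonicity of each $\varphi_{i,d}$ together with $d\le\tilde d$, I get, for every $d\in\G$,
\[d(f_i(x),f_i(y))\le\varphi_{i,d}(d(x,y))\le\varphi_{i,d}(\tilde d(x,y))\le\tilde\varphi_i(\tilde d(x,y));\]
taking the maximum over $d\in\G$ on the left gives the required bound $\tilde d(f_i(x),f_i(y))\le\tilde\varphi_i(\tilde d(x,y))$.

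\textbf{Edelstein case and the main obstacle.} Nonexpansivity with respect to $\tilde d$ is immediate because $\max$ preserves $\le$. For the strict part, assume $\tilde d(x,y)>0$. For each $d\in\G$ I verify $d(f_i(x),f_i(y))<\tilde d(x,y)$ by cases: if $d(x,y)=0$ then $d(f_i(x),f_i(y))=0<\tilde d(x,y)$; if $d(x,y)>0$ then the Edelstein property with respect to $d$ gives $d(f_i(x),f_i(y))<d(x,y)\le\tilde d(x,y)$. Since $\G$ is finite, the maximum of finitely many values each strictly below $\tilde d(x,y)$ is itself strictly below $\tilde d(x,y)$, yielding the Edelstein property for $\tilde d$. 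The only delicate point in the whole argument is precisely this last step: strict inequalities survive only \emph{finite} suprema, which is exactly why the directed multimetric $\tilde\D$ is built from finite subfamilies of $\D$ and why one cannot hope for a version of this lemma that preserves Edelstein contractivity under uncountable suprema.
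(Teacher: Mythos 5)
Your proposal is correct and follows essentially the same route as the paper's proof: bound $\max\G(f_i(x),f_i(y))$ termwise using monotonicity of the witnesses and $d\le\max\G$, take $\tilde\varphi_i=\max_{d\in\G}\varphi_{i,d}$ in the Matkowski case, and use finiteness of $\G$ to preserve the strict inequality in the Edelstein case (the paper organizes that last step by restricting to the subfamily $\G_0=\{d\in\G: d(x,y)>0\}$ rather than arguing case by case per $d$, but the idea is identical). The only cosmetic difference is that you spell out the Banach case, which the paper omits as routine.
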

\begin{proof}
 We will just show the thesis for Matkowski and Edelstein contractivity. Assume first that $\S=(f_i)_{i=1}^k$ is Matkowski contracting, take any $i=1,...,k$ and a finite and nonempty $\G\subset\D$, and let $\varphi_\G:=\max\{\varphi_d:d\in\G\}$, where $\varphi_d$ is a witness for Matkowski contractivity of $f_i$ for a pseudometric $d$. Then for every $x,y\in X$,
$$
\max\G(f_i(x),f_i(y))=\max\{d(f_i(x),f_i(y)):d\in\G\}\leq\max\{\varphi_d(d(x,y)):d\in\G\}\leq $$ $$\leq \max\{\varphi_d(\max \G(x,y)):d\in\G\}= \varphi_\G(\max\G(x,y)).
$$
Assume now that $\S$ is Edelstein contracting, take any $i=1,...,k$ and a finite and nonempty $\G\subset\D$. For $x,y\in X$, we have
$$
\max\G(f_i(x),f_i(y))=\max\{d(f_i(x),f_i(y)):d\in\G\}\leq\max\{d(x,y):d\in\G\}=\max\G(x,y).
$$
Now if $\max\G(x,y)>0$, then choosing $\G_0:=\{d\in\G:d(x,y)>0\}$, we have
$$
\max\G(f_i(x),f_i(y))=\max\G_0(f_i(x),f_i(y))<\max\G_0(x,y)=\max\G(x,y).
$$
\end{proof}
The following result gathers some observations from \cite{BKNNS} which show certain relationships between topological, Matkowski and Edelstein contractivity, and which base on remetrization techniques from mentioned papers \cite{BKNNS} and \cite{MM}. 
\begin{theorem}\label{aaa}
Let $X$ be a Hausdorff topological space and $\S=(f_i)_{i=1}^k$ be an IFS on $X$.
    \begin{itemize}
        \item[(1)] If $\D$ is an admissible multimetric on $X$ and $\S$ is compactly dominating and Edelstein contracting w.r.t $\D$, then $\S$ is topologically contracting.
        \item[(2)] If $X$ is compact and $\D$ is an admissible multimetric on $X$, then $\S$ is Edelstein contracting iff $\S$ is Matkowski contracting w.r.t $\D$.
        \item[(3)] If $\S$ is topologically contracting, then for every $K\in\K(X)$ with $\S(K)\subset K$, there is an admissible multimetric $\D$ on $K$ so that the restricted IFS $\hat{\S}=(f_i{\restriction}_{ K})_{i=1}^k$ is Matkowski contracting w.r.t. $\D$.
        \item[(4)] If $\S$ is topologically contracting and $A_\S$ is its attractor, then there is an admissible metric $d$ on $A_\S$ so that the restricted IFS $\hat{\S}=(f_i{\restriction}_{A_\S})_{i=1}^k$ is Matkowski contracting w.r.t. $d$.
    \end{itemize}
\end{theorem}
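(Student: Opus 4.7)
For part (1), the plan is to fix $D \in \K(X)$ with $\S(D) \subset D$ and $\sigma = (\sigma_n) \in \Sigma_k$, set $K_n := f_{\sigma_1} \circ \cdots \circ f_{\sigma_n}(D)$, and observe that $(K_n)$ is a decreasing sequence of nonempty compact sets, so its intersection $K_\infty$ is nonempty and compact. To show $K_\infty$ is a singleton, it suffices (since $\D$ is a multimetric) to prove that $d(x,y) = 0$ for each $d \in \D$ and $x,y \in K_\infty$. Restrict attention to the compact invariant set $D$: by Remark \ref{rem:multimetric} the restricted family $\D_D$ is an admissible multimetric on $D$, and each $f_i{\restriction}_D$ remains Edelstein contracting w.r.t. $\D_D$. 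Since $D$ is compact, part (2) (which I intend to prove first) upgrades Edelstein to Matkowski. Then for each $d \in \D$, taking the maximum of the witnesses $\varphi_{i,d}$ over $i = 1,\dots,k$ gives a common comparison function $\varphi_d$ with $d(f_i(x),f_i(y)) \leq \varphi_d(d(x,y))$, and monotonicity of $\varphi_d$ implies $\operatorname{diam}_d(K_n) \leq \varphi_d^{(n)}(\operatorname{diam}_d(D)) \to 0$. Hence $d(x,y) = 0$ for every $x,y \in K_\infty$.

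For part (2), the implication Matkowski $\Rightarrow$ Edelstein is immediate: any comparison function satisfies $\varphi(t) < t$ for $t > 0$, so for each $d$ and $i$ we have $d(f_i(x),f_i(y)) \leq \varphi_{i,d}(d(x,y)) \leq d(x,y)$ with strict inequality whenever $d(x,y) > 0$. For the converse, I would apply \cite[Proposition 4.2]{BKNNS} to each pseudometric $d \in \D$ separately: since $\Tau_\D$ is compact and $d$ is continuous w.r.t. $\Tau_\D$ (this is built into the definition of $\Tau_\D$), an Edelstein contraction w.r.t. $d$ on $(X,d)$ is automatically a Matkowski contraction w.r.t. $d$, which yields the claim uniformly over $\D$.

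Parts (3) and (4) I would obtain directly from the remetrization machinery of \cite{BKNNS} and \cite{MM}. For (3), one fixes $K \in \K(X)$ with $\S(K) \subset K$; since topological contractivity is inherited by the compact invariant subspace $(K,\Tau{\restriction}_K)$ and $K$ is a compact Hausdorff space, the construction of a separating family of pseudometrics that witnesses Matkowski contractivity of the restricted maps produces the desired admissible multimetric $\D$ on $K$. For (4), the attractor $A_\S$ is metrizable by Theorem \ref{TIFS}, so the general multimetric construction from (3) collapses to a single admissible metric $d$ making $\hat{\S}$ Matkowski contracting.

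The main obstacle I expect is the clean articulation of part (1): the Edelstein condition alone fails to force diameters to shrink, and one must use compactness of $D$ together with part (2) to pass to a Matkowski witness, then exploit the nondecreasing, iteration-vanishing nature of comparison functions to obtain the singleton conclusion. Once this is in place, parts (3) and (4) are essentially pointers to the remetrization theorems already established in the cited papers.
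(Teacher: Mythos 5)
Your proposal is correct, and for items (2)--(4) it follows essentially the same route as the paper: the hard direction of (2) is outsourced to \cite[Proposition 4.2]{BKNNS} (applied to each continuous pseudometric $d\in\D$ on the compact space $X$), and (3), (4) are pointers to the remetrization results of \cite{BKNNS} together with the facts that the restricted IFS is again topologically contracting and that $A_\S$ is metrizable. The genuine difference is item (1): the paper disposes of it by citing the equivalence $(a)\Leftrightarrow(e)$ of \cite[Theorem 5.8]{BKNNS}, whereas you give a self-contained argument --- restrict to an arbitrary subinvariant compact $D$, upgrade Edelstein to Matkowski on $D$ via your item (2), take a common comparison function $\varphi_d=\max_i\varphi_{i,d}$, and show $\operatorname{diam}_d\big(f_{\sigma_1}\circ\cdots\circ f_{\sigma_n}(D)\big)\le\varphi_d^{(n)}(\operatorname{diam}_d(D))\to 0$ for every $d\in\D$, so that the nonempty intersection of the decreasing compacta has zero $d$-diameter for all $d$ and is a singleton because $\D$ separates points. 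This argument is sound (the only point to articulate carefully is that the inductive diameter estimate is proved for all shifted code sequences simultaneously, since $K_{n+1}=f_{\sigma_1}(f_{\sigma_2}\circ\cdots\circ f_{\sigma_{n+1}}(D))$), and it is in effect the multimetric analogue of the paper's Remark \ref{rem:proj}. What the paper's version buys is brevity and fidelity to the theorem's stated role as a digest of \cite{BKNNS}; what yours buys is that item (1) becomes transparent and independent of Theorem 5.8 of \cite{BKNNS}, at the cost of making (1) logically depend on (2).
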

\begin{proof}
Item (1) follows from \cite[Theorem 5.8]{BKNNS}, equivalence $(a)\Leftrightarrow(e)$. Item (2) follows from \cite[Theorem 6.8]{BKNNS}, equivalence $(g)\Leftrightarrow(h)$ and the fact that Rakotch contractiveness implies Matkowski contractiveness, and Matkowski contractiveness implies Edelstein contractiveness (\cite[Proposition 4.2]{BKNNS}). Item (3) follows from \cite[Theorem 6.8]{BKNNS}, equivalence $(a)\Leftrightarrow(g)$ and the fact that the restricted IFS $\hat{\S}$ is also topologically contracting. Finally, item (4) follows from \cite[Theorem 6.8]{BKNNS}, equivalence $(a)\Leftrightarrow(h)$ and the fact that $A_\S$ is metrizable  (Theorem \ref{TIFS}) and that $\mathfrak{mn}(X)=1$ for metrizable spaces (\cite[p. 1037 and 1038]{BKNNS}).
\end{proof}

\subsection{Fuzzy iterated function systems on topological spaces}\label{sec:fuzzy}
In this section we will present the framework and basic results on the fuzzy approach to the Hutchinson-Barnsley theory. We will extend the metric setting (considered earlier; see \cite{Cabrelli} or \cite{OS}) to a purely topological one.\\
For a topological space $X$, by $\KF(X)$ we denote the family of all fuzzy sets $u:X\to[0,1]$ which are:
\begin{itemize}
    \item \emph{normal}, meaning that $u(x_0)=1$ for some $x_0\in X$;
    \item \emph{upper semicontinuous} (usc), which equivalently means that for every $\alpha\in(0,1]$, the $\alpha$-cut $[u]^\alpha:=\{x\in X:u(x)\geq \alpha\}$ is closed;
    \item \emph{compactly supported}, meaning that their supports 
    $
        [u]^0:=\overline{\{x\in X:u(x)>0\}}
    $
    are compact.
    \end{itemize}
We will call a fuzzy set $u\in\KF(X)$ as \emph{a compact fuzzy set}.\\
In \cite{OS} (see also \cite{Cabrelli}), it was shown that if $(X,d)$ is a metric space, then the space $\KF(X)$ can be metrized by the metric
\begin{equation}\label{dhf}
    d_{HF}(u,v)=\sup_{\alpha\in[0,1]}d_H([u]^\alpha,[v]^\alpha),\;\;\;u,v\in\KF(X),
\end{equation}
which turns out to be complete provided that $d$ is complete (see \cite[Theorem 2.2]{OS}).\\
The fuzzy IFS setting involves the image of a fuzzy set, introduced by Zadeh \cite{Zadeh} and the notion of grey level maps.\\
For a function $f:X\to Y$, where $Y$ is another set, and a fuzzy set $u:X\to[0,1]$, we define the fuzzy set $f[u]:Y\to[0,1]$ in the following way:
\begin{equation}\label{f[u]}
f[u](y):=\left\{\begin{array}{ccc}\sup\{u(x):x\in f^{-1}(y)\}&\mbox{if}&f^{-1}(y)\neq\emptyset\\
0&\mbox{if}&f^{-1}(y)=\emptyset\end{array}\right.
\end{equation}
\begin{remark}\label{rem:2.7}[see \cite[Lemma 3]{KSW} for the metric case]
    \emph{
    If $f:X\to X$ is continuous and $u:X\to [0,1]$ is usc and compactly supported, then
    $$
f[u](y)=\left\{\begin{array}{ccc}\max\{u(x):x\in f^{-1}(y)\}&\mbox{if}&f^{-1}(y)\neq\emptyset\\
0&\mbox{if}&f^{-1}(y)=\emptyset\end{array}\right.
$$
To get this description, one has to use the fact that usc maps on compact sets attain their maxima (see \cite[Theorem 3, p. 361]{Bourbaki}), and restrict the supremum in (\ref{f[u]}) to $f^{-1}(y)\cap[u^0]$.
    }
\end{remark}
    A system $\vr=(\vr_i)_{i=1}^k$ of selfmaps $\vr_i:[0,1]\to[0,1]$ is \emph{an admissible system of grey level maps}, if
    \begin{itemize}
        \item[(i)] for every $i=1,...,k$, the map $\vr_i$ is right continuous and nondecreasing (equivalently, usc and nondecreasing), and satisfies $\vr_i(0)=0$;
        \item[(ii)] there exists $j=1,...,k$, such that $\vr_j(1)=1$.
    \end{itemize}

We are ready to define fuzzy IFSs and their corresponding fuzzy Hutchinson operators.\\ Note that in the theory of fuzzy sets, the maximum of fuzzy sets is a counterpart of the standard union of sets.
\begin{definition}\emph{
    Let $\S=(f_i)_{i=1}^k$ be an IFS on a Hausdorff topological space $X$ and $\vr=(\vr_i)_{i=1}^k$ be an admissible system of grey level maps.
    Then the pair $\S_\F=(\S,\vr)$ will called }a fuzzy iterated function system\emph{ (fuzzy IFS for short)}.\\
    \emph{The operator $\S_\F:\KF(X)\to\KF(X)$ defined by
    $$
        \forall_{u\in\KF(X)}\;\S_\F(u):=\max\{\vr_i\circ(f_i[u]):i=1,...,k\}
    $$
    will be called as} the fuzzy Hutchinson operator \emph{adjusted to $\S_\F$.}
\end{definition}
In \cite{OS} (see also \cite{Cabrelli}) it is proved that the fuzzy Hutchinson operator is well defined if $X$ is a metric space (see \cite[Proposition 2.12]{OS}). The proof of this fact for topological setting is similar. Below we formulate a technical lemma which justifies it and which gives a description of $\alpha$-cuts of a fuzzy Hutchinson operator. Its proof will be short as we will refer to the metric setting if particular reasoning is the same.

\begin{lemma}\label{ophut}
Let $X$ be a Hausdorff topological space.
\begin{itemize}
    \item[(I)] Let $f:X\to X$ be continuous, $u\in\KF(X)$ and $\vr:[0,1]\to[0,1]$ be right continuous, nondecreasing and $\vr(0)=0$.
\begin{itemize}
\item[(i)] Setting $\beta(\alpha):=\left\{\begin{array}{ccc}\inf\{t:\vr(t)\geq \alpha\}&\mbox{if}&0<\alpha\leq\vr(1)\\\inf\{t:\vr(t)>0\}&\mbox{if}&\alpha=0\end{array}\right.
            $, we have
$$[\rho\circ u]^\alpha=\begin{cases}\emptyset & \text{ if } \alpha>\rho(1)\\
                \acut[\beta(\alpha)]{u} & \text{ if } \big(0<\alpha\leqslant\rho(1)\text{ and } \beta(\alpha)>0\big)\;\mbox{or}\;\big(\alpha=0\text{ and }\rho(\beta(0))>0\big)\\
                \overline{\bigcup_{\eta>\beta(0)} [u]^\eta} & \text{ if } \alpha=0 \text{ and } \rho(\beta(0))=0.
                
            \end{cases}
            $$
    \item[(ii)] $\rho\circ u$ is usc and compactly supported.
    \item[(iii)] For every $\alpha\in[0,1]$, \begin{equation}\label{eq:ppoo}[\rho\circ(f[u])]^\alpha=f([\rho\circ u]^\alpha).\end{equation}
\end{itemize}

    \item[(II)] If $\S_\F=((f_i)_{i=1}^k,(\vr_i)_{i=1}^k)$ is a fuzzy IFS and $u\in\K_\F(X)$, then for every $\alpha\in[0,1]$,
   \begin{equation}\label{eq:ppooo}
       [\S_\F(u)]^\alpha=\bigcup_{i=1}^kf_i([\vr_i\circ u]^\alpha).
   \end{equation}
   In particular, $\S_\F(u)\in\KF(X)$.
\end{itemize}
\end{lemma}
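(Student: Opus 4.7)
My plan is to prove part (I) by a direct computation of $\alpha$-cuts and then deduce part (II) by a finite-union argument, verifying $\S_\F(u)\in\KF(X)$ at the end.

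For (I)(i), the key step is to determine the preimage $\rho^{-1}([\alpha,1])\subseteq[0,1]$. Since $\rho$ is nondecreasing and right continuous, this preimage is empty when $\alpha>\rho(1)$, and for $0<\alpha\leq\rho(1)$ it equals the closed interval $[\beta(\alpha),1]$: right continuity guarantees that the infimum $\beta(\alpha)$ is attained, so $\rho(\beta(\alpha))\geq\alpha$. Note also that for $\alpha>0$ one has $\beta(\alpha)>0$ since $\rho(0)=0$, which is why the case ``$0<\alpha\leq\rho(1)$ and $\beta(\alpha)=0$'' never arises. Composing with $u$ then yields $[\rho\circ u]^\alpha=u^{-1}(\rho^{-1}([\alpha,1]))=[u]^{\beta(\alpha)}$. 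For $\alpha=0$, recall $[\rho\circ u]^0=\overline{\{y:\rho(u(y))>0\}}$ by convention; split on whether $\rho(\beta(0))>0$ (so $\rho^{-1}((0,1])=[\beta(0),1]$, giving the already-closed set $[u]^{\beta(0)}$) or $\rho(\beta(0))=0$ (so $\rho^{-1}((0,1])=(\beta(0),1]$, giving $\bigcup_{\eta>\beta(0)}[u]^\eta$, whose closure appears in the statement). Part (I)(ii) is now immediate: for each $\alpha>0$, $[\rho\circ u]^\alpha$ is either empty or a closed $\alpha$-cut of $u$, yielding upper semicontinuity, and $[\rho\circ u]^0\subseteq [u]^0$ is a closed subset of a compact set, hence compact.

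For (I)(iii), the core ingredient is the identity $[f[u]]^\gamma=f([u]^\gamma)$ valid for every $\gamma>0$. This uses Remark \ref{rem:2.7}: since the supremum in the definition of $f[u]$ is attained, $f[u](y)\geq\gamma$ is equivalent to the existence of $x\in f^{-1}(y)$ with $u(x)\geq\gamma$, which in turn means $y\in f([u]^\gamma)$. Plugging this into the formula from (i), applied once to $u$ and once to $f[u]$ (and noting that $\beta$ depends only on $\rho$), yields (\ref{eq:ppoo}) in each of the three regimes. The delicate point is the closure case at $\alpha=0$, where one needs $f\bigl(\overline{\bigcup_{\eta>\beta(0)}[u]^\eta}\bigr)=\overline{\bigcup_{\eta>\beta(0)}f([u]^\eta)}$; this holds because the closure on the left lies inside the compact set $[u]^0$, so its continuous image is compact hence closed, and the two standard inclusions $f(\overline{A})\subseteq\overline{f(A)}$ (continuity) and $\overline{f(A)}\subseteq f(\overline{A})$ (the right side being closed and containing $f(A)$) combine to equality.

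For (II), fix $\alpha>0$ and compute $[\S_\F(u)]^\alpha=\{y:\max_i(\rho_i\circ f_i[u])(y)\geq\alpha\}=\bigcup_{i=1}^k[\rho_i\circ f_i[u]]^\alpha$, then apply (I)(iii) termwise; for $\alpha=0$, use that closure commutes with finite unions and apply (I)(iii) again. To see $\S_\F(u)\in\KF(X)$: normality comes from choosing $j$ with $\rho_j(1)=1$ together with $x_0$ with $u(x_0)=1$, giving $\S_\F(u)(f_j(x_0))\geq\rho_j(f_j[u](f_j(x_0)))\geq\rho_j(u(x_0))=1$; upper semicontinuity follows from (\ref{eq:ppooo}), since by (I)(ii) each $[\rho_i\circ u]^\alpha$ is compact and each $f_i$ is continuous, so $[\S_\F(u)]^\alpha$ is a finite union of compact (hence closed) sets; the same argument at $\alpha=0$ gives compactness of the support. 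The main technical obstacle is the case analysis in (I)(i), especially tracking whether the infimum defining $\beta(0)$ is attained by $\rho$; once (i) is in place, the remaining items reduce to routine structural manipulations.
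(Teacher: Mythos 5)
Your proof is correct and takes essentially the same route as the paper: the paper likewise obtains (i) by analysing $\rho^{-1}([\alpha,1])$ (deferring the computation to the metric version in \cite{OS}), proves (iii) using the attainment of the supremum from Remark \ref{rem:2.7}, and gets (II) by the finite-union and normality argument you give. You in fact supply more detail than the printed proof, notably the verification $f\bigl(\overline{A}\bigr)=\overline{f(A)}$ for $A\subseteq[u]^0$ in the $\alpha=0$ case of (iii), which the paper covers only with ``we proceed similarly.''
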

\begin{proof}
Item (i) follows the same lines as its metric version - see \cite[Proposition 2.6, items (b) and (c) (here we use the notation $\beta(0)=r_+$)]{OS}.

Item (ii) follows directly from (i) (cf. \cite[Proposition 2.7]{OS}).

The proof of item (iii) follows from similar lines as its metric version (\cite[ Lemma 2.13]{OS}, \cite[Lemma 2.4.1]{Cabrelli}). We give here just a sketch of the proof. If $\alpha\in(0,1]$, then by Remark \ref{rem:2.7}, we have
$$
[\rho\circ(f[u])]^\alpha=\{x\in X:\rho\circ(f[u])(x)\geq\alpha\}=\{x\in X:\rho(\max\{u(y):y\in f^{-1}(x)\})\geq\alpha\}=$$ $$=\{x\in X:\max\{(\rho\circ u)(y):y\in f^{-1}(x)\}\geq\alpha\}=f([\rho\circ u]^\alpha),
$$
so we get (\ref{eq:ppoo}) for $\alpha\in(0,1]$.
If $\alpha=0$, then we proceed similarly. Note that the reasoning handles also the case when both sets are empty.

Finally we deal with (II). Condition (\ref{eq:ppooo}) follows from (iii), as we have
$$
[\S_\F(u)]^\alpha=\{x\in X:\max\{\rho_i\circ(f_i[u])(x):i=1,...,k\}\geq\alpha\}=\bigcup_{i=1}^k\{x\in X:\rho_i\circ(f_i[u])(x)\geq\alpha\}=\bigcup_{i=1}^k[\rho_i\circ(f_i[u])]^\alpha
$$
Finally, item (i) and condition (\ref{eq:ppooo}) justify that $\S_\F(u)\in\KF(X)$ (for normality, recall that $\vr_j(1)=1$ for some $j=1,...,k$).
\end{proof}

The following result can be viewed as a fuzzy version of the Hutchinson theorem.
\begin{theorem}\label{thm:fuzzymat}(\cite[Theorem 3.14 for $m=1$]{OS})\\
    Assume that $\S_\F=(\S,\vr)$ is a fuzzy IFS on a complete metric space $(X,d)$ such that $\S$ is a Matkowski contracting. 
    Then there exists a unique $u_\S\in\KF(X)$ such that
    \begin{itemize}
        \item[(1)] $u_\S=\S_\F(u_\S)$;
        \item[(2)] for any $u\in\KF(X)$, the sequence of iterations $(\S_\F^{(n)}(u))$ converges to $u_\S$ w.r.t. the metric $d_{HF}$.
    \end{itemize}
\end{theorem}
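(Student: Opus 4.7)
The plan is to apply the Matkowski fixed point theorem to the fuzzy Hutchinson operator $\S_\F$ viewed as a selfmap of the complete metric space $(\KF(X),d_{HF})$. Completeness of this hyperspace under $d_{HF}$ is exactly the content of the result of \cite{OS} cited just after (\ref{dhf}), so the only substantive task is to exhibit a comparison function $\varphi$ for which
$$d_{HF}(\S_\F(u),\S_\F(v))\leq \varphi(d_{HF}(u,v)),\qquad u,v\in\KF(X).$$
Following the idea of Remark \ref{rem:maxvar}, I would first replace the individual Matkowski witnesses of $f_1,\dots,f_k$ by their pointwise maximum $\varphi$, which is again a comparison function and is a common Matkowski witness for every $f_i$.

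To estimate the Hausdorff-type distance, I would use the $\alpha$-cut description from Lemma \ref{ophut}(II):
$$[\S_\F(u)]^\alpha=\bigcup_{i=1}^{k}f_i\bigl([\vr_i\circ u]^\alpha\bigr).$$
Combining the elementary inequality $d_H\bigl(\bigcup_i A_i,\bigcup_i B_i\bigr)\leq \max_i d_H(A_i,B_i)$, the fact that $f_i$ is Matkowski with witness $\varphi$ (which lifts to the Hausdorff metric on $\K(X)$ with the same witness, cf. Remark \ref{rem:maxvar}), and monotonicity of $\varphi$, one obtains
$$d_H\bigl([\S_\F(u)]^\alpha,[\S_\F(v)]^\alpha\bigr)\leq \max_{i=1,\dots,k}\varphi\bigl(d_H([\vr_i\circ u]^\alpha,[\vr_i\circ v]^\alpha)\bigr).$$
Thus the entire argument reduces to showing that for each admissible grey level map $\vr_i$,
$$\sup_{\alpha\in[0,1]}d_H\bigl([\vr_i\circ u]^\alpha,[\vr_i\circ v]^\alpha\bigr)\leq d_{HF}(u,v).$$

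The core technical step is this last inequality, and it is where Lemma \ref{ophut}(I)(i) does all the work: for $\alpha\in(0,\vr_i(1)]$ with $\beta_i(\alpha)>0$, the cuts $[\vr_i\circ u]^\alpha$ and $[\vr_i\circ v]^\alpha$ coincide with $[u]^{\beta_i(\alpha)}$ and $[v]^{\beta_i(\alpha)}$ respectively, so their Hausdorff distance is directly one of the terms in the supremum defining $d_{HF}(u,v)$; for $\alpha>\vr_i(1)$ both cuts are empty; and for $\alpha=0$ the description gives either $[u]^{\beta_i(0)}$ or the closure $\overline{\bigcup_{\eta>\beta_i(0)}[u]^\eta}$, which I would handle by approximating from above through $\eta\downarrow\beta_i(0)$ and using continuity of the Hausdorff metric under monotone closure. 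The case $\alpha=0$ with $\vr_i(\beta_i(0))=0$ is the only mildly delicate point; everything else is a direct substitution.

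Once the inequality $d_{HF}(\S_\F(u),\S_\F(v))\leq \varphi(d_{HF}(u,v))$ is established, the Matkowski fixed point theorem applied on $(\KF(X),d_{HF})$ yields a unique $u_\S\in\KF(X)$ with $\S_\F(u_\S)=u_\S$ and $d_{HF}(\S_\F^{(n)}(u),u_\S)\to 0$ for every starting $u\in\KF(X)$, which is exactly items (1) and (2) of the statement. The main obstacle, as noted, is just the careful bookkeeping for the $\alpha=0$ cut when the grey level map vanishes on an interval; the rest is a mechanical transfer of the classical Hutchinson--Matkowski scheme to the fuzzy level via Lemma \ref{ophut}.
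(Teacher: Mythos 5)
Your proposal is correct and is essentially the argument the paper relies on: the statement itself is quoted from \cite{OS}, but the very same chain of estimates (the common comparison function of Remark \ref{rem:maxvar}, the $\alpha$-cut formula of Lemma \ref{ophut}, the union inequality (\ref{qqaaq}), the lift of Matkowski contractivity to $d_H$ in Lemma \ref{lem:5.15}, and the bound $\sup_\alpha d_H([\vr_i\circ u]^\alpha,[\vr_i\circ v]^\alpha)\leq d_{HF}(u,v)$ of Lemma \ref{lem:3.17}) is carried out in the proof of Theorem \ref{thm:main}, of which your argument is the single-metric special case. Your treatment of the $\alpha=0$ cut via monotone approximation is exactly what inequality (\ref{qqaaq}) formalizes, so no gap remains.
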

\begin{definition}
    \emph{In the above frame, a (necessarily unique) fuzzy set $u_\S$ satisfying (1) and (2) from the above theorem is called as} a fuzzy attractor\emph{ of $\S_\F$.}
\end{definition}
Recently in \cite{Miculescu} there has been obtained a description of fuzzy attractors which involves the projection map $\pi$. We will formulate it in a short way, for a brief explanation we refer the reader to \cite{Miculescu}. Note that the key result \cite[Theorem 3.4]{Miculescu} is given for Banach contracting fuzzy IFSs, but it also works for Matkowski's ones. Below we use the Zadeh's definition of the image (\ref{f[u]}).
\begin{theorem}\label{thm:2.6}(\cite[Theorems 3.2 and 3.4]{Miculescu})\\
    Let $u_\S$ be a fuzzy attractor of a fuzzy IFS $\S_\F=(\S,\vr)$ on a complete metric space, where $\S$ is Matkowski contracting. Then
    $$
        u_\S=\pi[u_\Lambda],
    $$
    where $\pi:\Sigma_k\to X$ is the projection map and the fuzzy set $u_\Lambda:\Sigma_k\to [0,1]$ is given by
    \begin{equation}\label{eq:canat}
        u_\Lambda(\sigma)=\lim_{n\to\infty}\varrho_{\sigma_1}\circ...\circ\varrho_{\sigma_n}(1),\;\;\;\mbox{for}\;\sigma=(\sigma_n)\in\Sigma_k.
    \end{equation}
\end{theorem}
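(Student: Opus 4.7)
The overall strategy is to verify that $v := \pi[u_\Lambda]$ is a fixed point of the fuzzy Hutchinson operator $\S_\F$; the uniqueness assertion in Theorem \ref{thm:fuzzymat} (applicable since $\S$ is Matkowski contracting) then forces $u_\S = v$. First I would check that $u_\Lambda$ is a well-defined element of $\KF(\Sigma_k)$. Set $g_n(\sigma) := \varrho_{\sigma_1}\circ\cdots\circ\varrho_{\sigma_n}(1)$; this function depends only on the first $n$ coordinates, hence is continuous on the compact product space $\Sigma_k$. Since each $\varrho_i$ is nondecreasing and $\varrho_i(1)\le 1$, the sequence $(g_n(\sigma))_n$ is monotonically nonincreasing, so $u_\Lambda = \inf_n g_n$ is well-defined and upper semicontinuous as an infimum of continuous functions. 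Normality comes from choosing $j$ with $\varrho_j(1)=1$ and the constant address $\sigma = (j,j,\ldots)$, for which $g_n(\sigma)=1$ for every $n$; compact support is automatic from compactness of $\Sigma_k$.

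Next I would show that $v = \pi[u_\Lambda] \in \KF(X)$. The projection $\pi:\Sigma_k\to X$ is continuous (a standard consequence of topological contractivity, available here by Remark \ref{rem:proj}). For $\alpha>0$, the cut $[u_\Lambda]^\alpha$ is closed in compact $\Sigma_k$, so $[v]^\alpha = \pi([u_\Lambda]^\alpha)$ is compact and hence closed in Hausdorff $X$; normality follows by applying $\pi$ to the address $(j,j,\ldots)$, and $[v]^0\subset\pi(\Sigma_k)=A_\S$ is compact.

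The heart of the argument is establishing $\S_\F(v)=v$, which rests on two ingredients. The first is a \emph{shift identity} $u_\Lambda(i\sigma) = \varrho_i(u_\Lambda(\sigma))$ for every $i\in\{1,\ldots,k\}$ and $\sigma\in\Sigma_k$, where $i\sigma$ denotes the sequence prefixed by $i$. This follows from monotonicity and right-continuity of $\varrho_i$: since $g_n(\sigma)\downarrow u_\Lambda(\sigma)$ from above, $\varrho_i(g_n(\sigma))\to\varrho_i(u_\Lambda(\sigma))$. The second is the standard IFS identity $f_i(\pi(\sigma))=\pi(i\sigma)$. Combining these and invoking Remark \ref{rem:2.7}, for $x\in X$,
\[
f_i[v](x) = \sup\{v(y):f_i(y)=x\} = \sup\{u_\Lambda(\sigma):\pi(i\sigma)=x\}.
\]
The fiber $\{\sigma:\pi(i\sigma)=x\}$ is a closed subset of the compact $\Sigma_k$, and $u_\Lambda$ is upper semicontinuous, so the supremum is attained at some $\sigma^*$. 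Monotonicity of $\varrho_i$ together with the shift identity give
\[
\varrho_i(f_i[v](x)) = \varrho_i(u_\Lambda(\sigma^*)) = u_\Lambda(i\sigma^*) = \sup\{u_\Lambda(i\sigma):\pi(i\sigma)=x\}.
\]
Taking the maximum over $i$ and substituting $\tau=i\sigma$ yields $\S_\F(v)(x) = \sup\{u_\Lambda(\tau):\pi(\tau)=x\} = v(x)$ for $x\in A_\S$; for $x\notin A_\S$, both sides vanish because $f_i(A_\S)\subset A_\S$ and $v$ is supported in $A_\S$.

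The main obstacle I anticipate is the interchange $\varrho_i(\sup S) = \sup\varrho_i(S)$ with $S = \{u_\Lambda(\sigma):\pi(i\sigma)=x\}$: for a merely nondecreasing $\varrho_i$ this interchange can fail at points of left-discontinuity, but the attainment of $\sup S$, guaranteed by upper semicontinuity of $u_\Lambda$ on the compact fiber, is precisely what converts the one-sided inequality $\varrho_i(\sup S)\ge\sup\varrho_i(S)$ into the equality needed to close the argument.
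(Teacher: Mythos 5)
The paper does not prove this statement at all --- it is imported verbatim by citation of \cite[Theorems 3.2 and 3.4]{Miculescu}, with only the remark that the Banach-contracting hypothesis there can be relaxed to Matkowski contractivity. Your argument is therefore a self-contained reconstruction rather than a parallel to anything in the text, and as far as I can check it is correct. The skeleton (show $u_\Lambda\in\KF(\Sigma_k)$, push forward to get $v=\pi[u_\Lambda]\in\KF(X)$, verify $\S_\F(v)=v$, conclude by uniqueness of the fixed point) is the natural one, and you handle the two genuinely delicate points properly: the shift identity $u_\Lambda(i\sigma)=\varrho_i(u_\Lambda(\sigma))$ really does need right-continuity of $\varrho_i$, and it is available precisely because $g_n(\sigma)\downarrow u_\Lambda(\sigma)$ from above; and the interchange $\varrho_i(\sup S)=\sup\varrho_i(S)$, which can fail for a merely nondecreasing $\varrho_i$, is rescued by the attainment of the supremum on the compact fiber via upper semicontinuity of $u_\Lambda$. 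Two small cosmetic points: the uniqueness you invoke is cleanest if read off from property (2) of Theorem \ref{thm:fuzzymat} (any fixed point $v$ satisfies $\S_\F^{(n)}(v)=v\to u_\S$ in $d_{HF}$, hence $v=u_\S$), since the theorem's ``unique'' formally refers to the pair of conditions (1) and (2); and the continuity of $\pi$ is not actually recorded in Remark \ref{rem:proj}, though in the Matkowski-contracting metric setting it is standard and easy (the diameters of $f_{\sigma_1}\circ\dots\circ f_{\sigma_n}(D)$ tend to $0$ uniformly in $\sigma$). Neither affects the validity of the argument.
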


\section{Fuzzy IFSs and their attractors on multimetric spaces and on Tychonoff spaces}\label{sec:3}

\subsection{The canonical topology on $\KF(X)$ generated by a multimetric}\label{sec:conon}
Let $(X,\D)$ be a multimetric space, and let $\KF(X)$ be the family of compact fuzzy subsets of $X$, when considering the topology $\Tau_\D$ on $X$.
For a pseudometric $d\in\D$, define $d_{HF}$ by
$$\forall_{u,v\in\F(X)}\;d_{HF}(u,v):=\sup\limits_{\alpha\in[0,1]} d_H(\acut{u},\acut{v}),$$
where $d_H$ is defined as in (\ref{eq:fdhh}). 
 Finally, let $$\D_{HF}:=\{d_{HF}:d\in\D\}.$$

\begin{proposition}\label{prop:ppoo}
        In the above frame, the family $\D_{HF}$ is multipseudometric on $\KF(X)$, which is multimetric on $\KF(X)$ provided that $\D$ is directed.
\end{proposition}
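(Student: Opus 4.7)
The plan is to establish the two claims separately: first that each $d_{HF}$ is a pseudometric on $\KF(X)$, and second that directedness of $\D$ upgrades $\D_{HF}$ to a separating family.

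For the pseudometric claim, I would fix $d \in \D$ and verify, for arbitrary $u,v,w \in \KF(X)$, the properties $d_{HF}(u,u)=0$, symmetry, finiteness, and the triangle inequality. The identity and symmetry are immediate from the corresponding properties of $d_H$ applied levelwise: $d_H(K,K)=0$ and $d_H(K,L)=d_H(L,K)$ for each nonempty compact $K,L$. Finiteness needs a word: for each $\alpha\in[0,1]$ the cuts $\acut{u}$ and $\acut{v}$ are nonempty (by normality of $u,v$) and lie in the compact set $\acut[0]{u}\cup \acut[0]{v}$, whose $d$-diameter is finite, so $d_H(\acut{u},\acut{v})$ is uniformly bounded in $\alpha$ and the supremum defining $d_{HF}(u,v)$ is finite. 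The triangle inequality follows by taking the pointwise (in $\alpha$) triangle inequality $d_H(\acut{u},\acut{w})\leq d_H(\acut{u},\acut{v})+d_H(\acut{v},\acut{w})$, bounding each summand by its supremum over $\alpha$, and then taking the supremum on the left.

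For the separation claim under the assumption that $\D$ is directed, I would argue as follows. Suppose $u\neq v$ in $\KF(X)$. Then there exists $x_0\in X$ with $u(x_0)\neq v(x_0)$; without loss of generality $u(x_0)>v(x_0)$, and setting $\alpha:=u(x_0)>0$ we have $x_0\in \acut{u}$ while $x_0\notin \acut{v}$. Since $v$ is upper semicontinuous, $\acut{v}$ is closed in $\Tau_\D$; since $\D$ is directed, the balls $B_d(x_0,\varepsilon)$ form a neighborhood base at $x_0$ in $\Tau_\D$ (this is the observation used in Lemma \ref{lem:mult1}), so there exist $d\in\D$ and $\varepsilon>0$ with $B_d(x_0,\varepsilon)\cap \acut{v}=\emptyset$. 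Consequently $\min_{y\in \acut{v}} d(x_0,y)\geq\varepsilon$, and therefore
\[
d_{HF}(u,v)\geq d_H(\acut{u},\acut{v})\geq \min_{y\in\acut{v}} d(x_0,y)\geq\varepsilon>0,
\]
which gives the required separation.

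I do not foresee a real obstacle; the only point deserving a little care is that the separation argument genuinely needs directedness, because without it one could only get a \emph{finite} family of pseudometrics jointly separating $x_0$ from $\acut{v}$, and no single $d\in\D$ need witness this (compare Remark \ref{rem:directa}). Using Theorem \ref{directed multimetrics}(1) or Lemma \ref{lem:mult1} to convert neighborhoods into single-pseudometric balls is exactly where directedness is used. The finiteness of $d_H$ on $\alpha$-cuts (uniformly in $\alpha$) is the only other subtlety, and it is handled by observing that all cuts sit inside the compact set $\acut[0]{u}\cup\acut[0]{v}$.
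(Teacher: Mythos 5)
Your proof is correct, and the separation step takes a different (more self-contained) route than the paper's. The paper first passes from $u\neq v$ to an $\alpha_0$ with $[u]^{\alpha_0}\neq[v]^{\alpha_0}$ and then invokes Theorem \ref{directed multimetrics}(2) as a black box to produce a single $d\in\D$ with $d_H([u]^{\alpha_0},[v]^{\alpha_0})>0$; you instead work directly with a point $x_0$ where $u(x_0)>v(x_0)$, use upper semicontinuity to see that the cut $[v]^{u(x_0)}$ is a closed set missing $x_0$, and use the fact that for a directed multimetric the single-pseudometric balls form a neighbourhood base (the observation behind Lemma \ref{lem:mult1}) to find one $d$ and one $\varepsilon$ witnessing $d_H([u]^{\alpha},[v]^{\alpha})\geq\varepsilon$. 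Your version re-proves the relevant special case of Theorem \ref{directed multimetrics}(2) from first principles, which makes the role of directedness and of upper semicontinuity explicit (and your closing remark correctly identifies directedness as the crux, in line with Remark \ref{rem:directa}); the paper's version is shorter because it reuses the already-established fact that $\D_H$ is a multimetric on $\K(X)$. You also verify the pseudometric axioms, including finiteness of the supremum via compactness of $[u]^0\cup[v]^0$ and nonemptiness of all cuts via normality, which the paper dismisses as clear; this is harmless extra care and contains no errors.
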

\begin{proof}
    Clearly, each $d_{HF}$ is a pseudometric. Assume additionally that $\D$ is directed and take any distinct $u,v\in\KF(X)$. Then there exists $\alpha_0\in[0,1]$ such that $\acut[\alpha_0]{u}\neq\acut[\alpha_0]{v}$. Since $\acut[\alpha_0]{u}, \acut[\alpha_0]{v}\in\K(X)$ and $\D_H$ is a multimetric (see Theorem \ref{directed multimetrics}, item (2)), there exists $d\in\D$ such that $d_H(\acut[\alpha_0]{u},\acut[\alpha_0]{v})>0$. Hence
    $$
        d_{HF}(u,v)=\sup\limits_{\alpha\in[0,1]} d_H\left(\acut[\alpha]{u},\acut[\alpha]{v}\right)\geqslant d_H\left(\acut[\alpha_0]{u},\acut[\alpha_0]{v}\right)>0.
    $$
\end{proof}


\begin{remark}
    \emph{
    In the case when $\D=\{d\}$ for some metric $d$, i.e., when $(X,d)$ is a metric space, then $\D_{HF}=\{d_{HF}\}$, where $d_{HF}$ is defined as in (\ref{dhf}), and the topology $\Tau_{\D_{HF}}$ is generated by the metric $d_{HF}$. This shows that our approach extends the metric case to a multimetric one.
    }
\end{remark}

Similarly as in the case of the hyperspace $\K(X)$, (see Theorem \ref{directed multimetrics}, item (2)), the topology $\Tau_{\D_{HF}}$ does not depend on the choice of a directed multimetric $\D$.
    \begin{theorem}\label{th:rownosc}
        Let $\D$ and $\G$ be directed multimetrics on the same set $X$. If $\Tau_\D=\Tau_\G$, then also $\Tau_{\D_{HF}}=\Tau_{\G_{HF}}$.
    \end{theorem}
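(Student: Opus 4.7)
The plan is to reduce the statement to an application of Lemma \ref{lem:mult1} on the hyperspace $\KF(X)$, using the uniform control on compact sets provided by Lemma \ref{lem: unversal pseudometric for compact set}. First I would verify that $\D_{HF}$ and $\G_{HF}$ are themselves directed: given $d_1, d_2 \in \D$, directedness of $\D$ yields $d \in \D$ with $d \geq \max(d_1, d_2)$ pointwise, and since the Hausdorff formula (\ref{eq:fdhh}) and the supremum over $\alpha$-cuts are monotonic in the underlying pseudometric, one obtains $d_{HF} \geq \max((d_1)_{HF}, (d_2)_{HF})$.

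By symmetry it suffices to show $\Tau_{\D_{HF}} \subset \Tau_{\G_{HF}}$. Thanks to Lemma \ref{lem:mult1}, this reduces to showing that for every $u \in \KF(X)$, every $d \in \D$, and every $\varepsilon > 0$, there exist $g \in \G$ and $\delta > 0$ such that $B_{g_{HF}}(u, \delta) \subset B_{d_{HF}}(u, \varepsilon)$. The crucial observation is that all $\alpha$-cuts $[u]^\alpha$ lie inside the compact support $[u]^0$. I would therefore apply Lemma \ref{lem: unversal pseudometric for compact set} to the compact set $K := [u]^0$ with $\varepsilon/2$ in place of $\varepsilon$, obtaining $g \in \G$ and $\delta > 0$ such that
$$\forall\, x \in [u]^0\quad B_g(x, \delta) \subset B_d\bigl(x, \tfrac{\varepsilon}{2}\bigr).$$

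Next I would fix $v \in \KF(X)$ with $g_{HF}(u, v) < \delta$ and any $\alpha \in [0,1]$. Since $[u]^\alpha$ and $[v]^\alpha$ are compact and $g$ is continuous on $(X, \Tau_\D) = (X, \Tau_\G)$, the Hausdorff value $g_H([u]^\alpha, [v]^\alpha) \leq g_{HF}(u,v) < \delta$ is attained, so for every $x \in [u]^\alpha \subset [u]^0$ there exists $y \in [v]^\alpha$ with $g(x,y) < \delta$, whence $d(x,y) < \varepsilon/2$; symmetrically, for every $y \in [v]^\alpha$ there exists $x \in [u]^\alpha \subset [u]^0$ with $g(x,y) < \delta$, and again $d(x,y) < \varepsilon/2$ by the universal bound and symmetry of $d$. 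Consequently $d_H([u]^\alpha, [v]^\alpha) \leq \varepsilon/2$, and taking supremum over $\alpha$ yields $d_{HF}(u, v) \leq \varepsilon/2 < \varepsilon$, as required.

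The main obstacle — which is precisely what Lemma \ref{lem: unversal pseudometric for compact set} was engineered to overcome — is that a priori the control $B_g(x,\delta) \subset B_d(x,\varepsilon)$ depends on the point $x$, whereas one needs a single $(g, \delta)$ that works simultaneously for every $\alpha$-cut (there are uncountably many of them, and the choice of $y$ corresponding to a given $x$ may jump with $\alpha$). Compactness of the support $[u]^0$, together with directedness of $\G$, allows one to pass from pointwise to uniform control and thus handle all cuts at once; once this is in place, the rest is bookkeeping in the definitions of $d_{HF}$ and $g_{HF}$.
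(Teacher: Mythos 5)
Your proposal is correct and follows essentially the same route as the paper: reduce via Lemma \ref{lem:mult1} to a ball-inclusion statement, apply Lemma \ref{lem: unversal pseudometric for compact set} to the compact support $[u]^0$ with $\varepsilon/2$, and then transfer the uniform bound to every $\alpha$-cut. Your explicit check that $\D_{HF}$ is directed (needed to invoke Lemma \ref{lem:mult1}) is a small point the paper leaves implicit, but otherwise the arguments coincide.
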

    \begin{proof}
        By symmetry of the problem and Lemma \ref{lem:mult1}, it suffices to show that
        $$
        \forall_{v_0\in \KF(X)} \;\forall_{d\in\D} \;\forall_{\varepsilon>0} \;\exists_{g\in\G}\; \exists_{\delta>0}\;  B_{g_{HF}}(v_0, \delta)\subset B_{d_{HF}}(v_0,\varepsilon).
        $$
        Fix $v_0\in\KF(X)$, $d\in\D$ and $\varepsilon>0$. Using Lemma \ref{lem: unversal pseudometric for compact set} for $K=\acut[0]{v_0}$ and $\frac{\varepsilon}{2}$, there exists $g\in\G$ and $\delta>0$, such that for any $x\in \acut[0]{v_0}$, we have
        \begin{equation}\label{eq: thm -> zawieranie kul}
            B_g(x,\delta)\subset B_d\big(x,\tfrac{\varepsilon}{2}\big).
        \end{equation}
        We will show that 
        \[
        B_{g_{HF}}(v_0,\delta)\subset B_{d_{HF}}(v_0, \varepsilon).
        \]
        Let $v\in B_{g_{HF}}(v_0,\delta)$, i.e.,
        \begin{equation}\label{eq: thm -> v jest w kuli}
            \sup\limits_{\alpha\in[0,1]} g_H\left(\acut{v_0},\acut{v}\right)<\delta
        \end{equation}
        and fix any $\alpha\in[0,1]$. For any $x\in \acut{v_0}$, by (\ref{eq: thm -> v jest w kuli}) there exists $y\in \acut{v}$ such that $g(x,y)<\delta$, so $y\in B_g(x,\delta)$. By (\ref{eq: thm -> zawieranie kul}), $y\in B_d(x, \frac{\varepsilon}{2})$. Therefore $\min\limits_{y\in\acut{v}} d(x,y) < \frac{\varepsilon}{2}$ and hence
        \[
        \max\limits_{x\in\acut{v_0}}\min\limits_{y\in\acut{v}} d(x,y) < \frac{\varepsilon}{2}.
        \]
        Now let $x\in \acut{v}$. By (\ref{eq: thm -> v jest w kuli}), there exists $y\in \acut{v_0}$ such that $g(x,y)<\delta$, hence $x\in B_g(y, \delta)$. Now using (\ref{eq: thm -> zawieranie kul}) we have that $x\in B_d(y, \frac{\varepsilon}{2})$. Therefore $\min\limits_{y\in\acut{v_0}} d(x,y) < \frac{\varepsilon}{2}$ and
        \[
        \max\limits_{x\in\acut{v}}\min\limits_{y\in\acut{v_0}} d(x,y) < \frac{\varepsilon}{2}.
        \]
        We conclude that $d_H(\acut{v_0}, \acut{v})<\frac{\varepsilon}{2}$ and, by arbitrariness of $\alpha$, we obtain
        $
        d_{HF}(v_0, v)\leqslant\frac{\varepsilon}{2}<\varepsilon,
    $
        so $v\in B_{d_{HF}}(v_0, \varepsilon)$.
    \end{proof}

\subsection{Fuzzy attractors of contractive fuzzy IFSs on multimetric spaces}\label{sec:2.3}


Similarly as in the case of IFSs, we can consider fuzzy IFSs on multimetric spaces $(X,\D)$ as pairs $\S_\F=(\S,\vr)$, where $\S$ is an IFS on $(X,\D)$.\\
Theorems \ref{directed multimetrics} and \ref{th:rownosc} allow to consider the canonical topology on $\KF(X)$ for multimetric space $(X,\D)$.
\begin{definition}
    \emph{Let $(X,\D)$ be a multimetric space. By } a canonical topology on $\KF(X)$ induced by $\D$\emph{ we will mean the topology $\Tau_{\D'_{HF}}$, where $\D'$ is any directed multimetric on $X$ with $\Tau_\D=\Tau_{\D'}$; we will denote this topology by $\Tau^{\F}_\D$.}
\end{definition}
\begin{remark}
    \emph{Let $(X,\D)$ be a multimetric space.
    \begin{itemize}
        \item[$\bullet$] By Theorem \ref{th:rownosc}, the canonical topology $\Tau_\D^\F$ is well defined, as it does not depend on the choice of directed multimetric $\D'$.
        \item[$\bullet$] By Remark \ref{rem:multimetric} and Proposition \ref{prop:ppoo}, the topology $\Tau_\D^\F$ is Tychonoff (in particular, Hausdorff).
        \item[$\bullet$] The topology $\Tau_\D^\F$ is stronger than the original topology $\Tau_{\D_{HF}}$ (i.e., $\Tau_{\D_{HF}}\subset \Tau_\D^\F$) generated by the multipseudometric $\D_{HF}$, since we can take, as a directed multimetric $\D'$, the directed multimetric $\tilde{\D}$ from Theorem \ref{directed multimetrics}.
        \item[$\bullet$] If $(X,d)$ is a metric space, then  $\Tau_{\{d\}}^\F$ is exactly the topology induced by the metric $d_{HF}$ defined by (\ref{dhf}).
    \end{itemize}
    }
\end{remark}

\begin{definition}\emph{
Let $(X,\D)$ be a multimetric space and $\S_\F$ be a fuzzy IFS.
    A (necessarily unique) compact fuzzy set $u_\S$ is a} fuzzy attractor \emph{of a fuzzy IFS $\S_\F$, if
    \begin{itemize}
        \item[(i)] $u_\S=\S_\F(u_\S)$;
        \item[(ii)] for every $u\in\KF(X)$, the sequence of iterations $(\S_\F^{(n)}(u))$ of the fuzzy Hutchinson operator converges to $u_\S$ w.r.t. the canonical topology $\Tau_{\D}^\F$.
    \end{itemize}
    } 
\end{definition}

Our main result of this section is the following version of the Hutchinson-Barnsley theorem, which is a direct extension of Theorem \ref{thm:fuzzymat}. Recall that the Matkowski contractivity implies the Edelstein contractivity.
\begin{theorem}\label{thm:main}
Let $(X,\D)$ be a multimetric space, and let $\S_\F=(\S,\vr)$ be a fuzzy IFS such that the IFS $\S$ is compactly dominating and Edelstein contracting. Then $\S_\F$ generates a unique fuzzy attractor.
\end{theorem}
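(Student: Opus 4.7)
The strategy is to lift the hypotheses to topological contractivity, construct the candidate fuzzy attractor $u_\S$ on the (metrizable) attractor $A_\S$ of $\S$ using the known metric result, and then drive convergence from arbitrary starting fuzzy sets by restricting to a suitable compact $\S$-subinvariant set. Concretely, since $\S$ is compactly dominating and Edelstein contracting, Theorem \ref{aaa}(1) upgrades it to topologically contracting, whence Theorem \ref{TIFS} produces the compact attractor $A_\S$. Theorem \ref{aaa}(4) then supplies an admissible metric $d$ on $A_\S$ under which $\S{\restriction}_{A_\S}$ is Matkowski contracting; $(A_\S,d)$ is complete as $A_\S$ is compact, and Theorem \ref{thm:fuzzymat} applied to the restricted fuzzy IFS yields a unique $u_\S\in\KF(A_\S)$, which we view in $\KF(X)$ by extending by zero. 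The fixed-point identity $\S_\F(u_\S)=u_\S$ on $\KF(X)$ then follows from Lemma \ref{ophut}(II), the inclusion $[u_\S]^0\subseteq A_\S$, and the invariance $\S(A_\S)=A_\S$: both sides are supported in $A_\S$, where they agree with the restricted fuzzy Hutchinson operator.

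For convergence from an arbitrary $u\in\KF(X)$, I would apply compact dominancy to $[u]^0\cup A_\S$ to obtain a compact $D\supseteq [u]^0\cup A_\S$ with $\S(D)\subseteq D$. Then all iterates $\S_\F^{(n)}(u)$ and the fixed point $u_\S$ lie in $\KF(D)$. The restriction $\hat{\S}$ is still topologically contracting on $D$, so Theorem \ref{aaa}(3) together with Lemma \ref{rem:2.6} yields a directed admissible multimetric $\D_D$ on $D$ making $\hat{\S}$ Matkowski contracting, with per-map witnesses $\varphi_i^d$ for each $d\in\D_D$.

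The main technical step is to show that $\hat{\S}_\F$ is Matkowski contracting with respect to every $d_{HF}$ in $(\D_D)_{HF}$, with joint witness $\varphi^d:=\max_i\varphi_i^d$ (a comparison function, by Remark \ref{rem:maxvar}). This follows by mimicking the metric proof of Theorem \ref{thm:fuzzymat} pseudometric-by-pseudometric: Lemma \ref{ophut}(II) writes $[\hat{\S}_\F(u)]^\alpha=\bigcup_i f_i([\vr_i\circ u]^\alpha)$, Matkowski contractivity of each $f_i$ pulls $\varphi_i^d$ out of $d_H$, and Lemma \ref{ophut}(i) identifies $[\vr_i\circ u]^\alpha$ with a single $\alpha$-cut $[u]^{\beta_i(\alpha)}$ of $u$ (or the closure-of-union variant at $\alpha=0$), yielding $\sup_{\alpha,i}d_H([\vr_i\circ u]^\alpha,[\vr_i\circ v]^\alpha)\leq d_{HF}(u,v)$. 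Specialising the resulting Matkowski estimate to $v=u_\S$ and iterating gives
\[
   d_{HF}\bigl(\S_\F^{(n)}(u),\,u_\S\bigr)=d_{HF}\bigl(\S_\F^{(n)}(u),\,\S_\F^{(n)}(u_\S)\bigr)\leq (\varphi^d)^n\bigl(d_{HF}(u,u_\S)\bigr)\to 0,
\]
so $\S_\F^{(n)}(u)\to u_\S$ in $\Tau_{\D_D}^\F$.

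To transfer this back to the ambient canonical topology, note that $\{d{\restriction}_{D\times D}:d\in\tilde\D\}$ is another admissible directed multimetric on $D$ (where $\tilde\D$ is the directed hull of $\D$), so Theorem \ref{th:rownosc} applied on $D$ shows it induces the same canonical fuzzy topology on $\KF(D)$ as $\D_D$; since each value $d_{HF}(u_n,u_\S)$ depends only on cuts contained in $D$, the convergence just obtained coincides with convergence in $\Tau_\D^\F$. Uniqueness of $u_\S$ is the standard orbit-limit argument: any other fixed point would be the limit of its own orbit and hence equal $u_\S$. The expected main obstacle is the contractivity estimate in the previous paragraph, because Lemma \ref{ophut}(i) forces a case split (the empty cut for $\alpha>\vr_i(1)$, the standard cut $[u]^{\beta_i(\alpha)}$, and the closure-of-union at $\alpha=0$ when $\vr_i(\beta_i(0))=0$), and the bound $d_H([\vr_i\circ u]^\alpha,[\vr_i\circ v]^\alpha)\leq d_{HF}(u,v)$ must be checked branch by branch; once that is settled, the iteration and the topology-matching step are routine.
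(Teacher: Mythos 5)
Your proposal is correct and follows essentially the same route as the paper's proof: lift to topological contractivity via Theorem \ref{aaa}(1), build $u_\S$ by applying Theorem \ref{thm:fuzzymat} on the metrizable attractor $A_\S$ and extending by zero, and then prove convergence by trapping the orbit of $u$ in a compact $\S$-subinvariant set and running a Matkowski estimate for the fuzzy Hutchinson operator pseudometric-by-pseudometric, exactly as the paper does with Lemmas \ref{ophut}, \ref{lem:5.15} and \ref{lem:3.17}. The one small divergence is in how Matkowski contractivity on the subinvariant compact set $D$ is obtained: you invoke Theorem \ref{aaa}(3) to produce a \emph{new} multimetric on $D$ and must then appeal to Theorem \ref{th:rownosc} to match its canonical fuzzy topology with that of the restricted $\D$, whereas the paper uses Theorem \ref{aaa}(2) to get Matkowski contractivity directly with respect to each restricted pseudometric $d{\restriction}_{D\times D}$ of the original (directed hull of) $\D$, which makes the final topology-identification step unnecessary; both variants are sound.
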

We precede the proof with three lemmas. The first one can be found in \cite{BKNNS} (cf. \cite[Proposition 5.4 and Corollary 5.5]{BKNNS} (in fact, we formulate here a particular version that will used later), and we skip its simple proof. The second one was formulated in \cite{OS} only for metric spaces, and we give its short proof. The last one focuses on natural extensions of fuzzy sets on subspaces and we skip its simple proof.
\begin{lemma}\label{lem:5.15}
Let $X$ be a continuous pseudometric on a topological space $X$ and $f:X\to X$. Assume that $K,D\in\K(X)$ and there is a comparison function $\varphi$ such that for every $x,y\in K\cup D$, we have
$
d(f(x),f(y))\leq\varphi(d(x,y))
$. Then
$$
    d_H(f(K),f(D))\leq\varphi(d_H(K,D)).
$$
\end{lemma}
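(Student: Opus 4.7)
The statement to prove asserts that if a map $f$ contracts distances on $K\cup D$ with a common nondecreasing witness $\varphi$ (i.e.\ $d(f(x),f(y))\le \varphi(d(x,y))$ for all $x,y\in K\cup D$), then the same estimate is inherited by the Hausdorff pseudometric of the images: $d_H(f(K),f(D))\le \varphi(d_H(K,D))$. The plan is to work directly from the formula \eqref{eq:fdhh} for $d_H$ and use the nondecreasing character of $\varphi$, relying on compactness of $K$ and $D$ and continuity of $d$ to ensure all extrema are attained.

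First I would observe that since $d$ is continuous on $X$ (hence continuous on the compact set $(K\cup D)^2$) and $f$ is continuous (as it is nonexpansive-like w.r.t.\ $d$, or more precisely, continuous on $K\cup D$ by the estimate), the sets $f(K), f(D)$ are compact and every $\min/\max$ appearing in the definition of $d_H$ is indeed attained. By symmetry, it suffices to prove
\[
\max_{x\in K}\min_{y\in D} d(f(x),f(y)) \le \varphi\bigl(d_H(K,D)\bigr),
\]
since the analogous bound with the roles of $K$ and $D$ swapped will follow by an identical argument, and $d_H(f(K),f(D))$ is the maximum of the two.

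For the key step, fix $x\in K$. By compactness of $D$ pick $y_x\in D$ with $d(x,y_x)=\min_{y\in D} d(x,y)$. Using the hypothesis (which applies because $x,y_x\in K\cup D$) and monotonicity of $\varphi$, we have
\[
\min_{y\in D} d(f(x),f(y)) \le d(f(x),f(y_x)) \le \varphi\bigl(d(x,y_x)\bigr) = \varphi\Bigl(\min_{y\in D} d(x,y)\Bigr) \le \varphi\Bigl(\max_{x'\in K}\min_{y\in D} d(x',y)\Bigr) \le \varphi\bigl(d_H(K,D)\bigr),
\]
where the last inequality uses again the monotonicity of $\varphi$ together with the defining formula \eqref{eq:fdhh} for $d_H(K,D)$. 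Taking the maximum over $x\in K$ yields the desired one-sided bound.

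The symmetric bound $\max_{y\in D}\min_{x\in K} d(f(x),f(y))\le \varphi(d_H(K,D))$ is obtained by the same reasoning with the roles of $K$ and $D$ interchanged; combining the two gives $d_H(f(K),f(D))\le \varphi(d_H(K,D))$. There is no genuine obstacle here beyond careful bookkeeping: the only subtle point is to make sure that the pair $(x,y_x)$ used in the inner application of the hypothesis really lies in $(K\cup D)\times (K\cup D)$, which is built into the statement since the inequality is assumed on all of $K\cup D$ rather than merely on $K$ or on $D$ separately.
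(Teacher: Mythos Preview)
Your argument is correct and is precisely the standard direct computation that the paper has in mind: the paper actually omits the proof entirely, calling it simple and referring to \cite[Proposition~5.4 and Corollary~5.5]{BKNNS}. One small caveat: your aside that ``$f$ is continuous \ldots\ by the estimate'' is not quite justified, since $d$ is only a continuous pseudometric and need not generate the topology of $X$; however, this does not affect the core inequality (your chain of estimates works verbatim with $\sup/\inf$ on the image side), and in every application of the lemma in the paper the map $f$ is one of the IFS maps $f_i$, hence continuous by assumption.
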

\begin{lemma}\label{lem:3.17}
Let $X$ be a continuous pseudometric on a topological space $X$, and let $\rho:[0,1]\to[0,1]$ be non decreasing and right continuous with and $\rho(0)=0$. Then for every $u,v\in\K_\F(X)$ and $\alpha\in[0,1]$, we have
$$
    d_H([\rho\circ u]^\alpha,[\rho\circ v]^\alpha)\leq d_{HF}(u,v),
$$
where we additionally assume that $d_H(\emptyset,\emptyset)=0$.
\end{lemma}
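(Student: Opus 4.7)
The plan is to fix $u,v\in\KF(X)$, $\alpha\in[0,1]$, set $r:=d_{HF}(u,v)$, and invoke the explicit description of $[\rho\circ u]^\alpha$ from Lemma \ref{ophut}(i). The key observation is that which of the four listed cases applies depends only on $\rho$ and $\alpha$, not on the underlying fuzzy set, so $[\rho\circ u]^\alpha$ and $[\rho\circ v]^\alpha$ always fall into the same case.

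Three of the four cases are immediate. If $\alpha>\rho(1)$, both cuts are empty and the inequality holds by the convention $d_H(\emptyset,\emptyset)=0$. If instead $[\rho\circ u]^\alpha=[u]^{\beta(\alpha)}$ (either of the two intermediate subcases), then also $[\rho\circ v]^\alpha=[v]^{\beta(\alpha)}$, and $d_H([u]^{\beta(\alpha)},[v]^{\beta(\alpha)})\le r$ follows at once from the definition of $d_{HF}$ as the supremum of the level-wise Hausdorff pseudodistances over $[0,1]$.

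The substantive case is $\alpha=0$ with $\rho(\beta(0))=0$, in which $A:=[\rho\circ u]^0=\overline{\bigcup_{\eta>\beta(0)}[u]^\eta}$, and $B:=[\rho\circ v]^0$ is defined analogously. Both sets are compact, being closed subsets of the compact supports $[u]^0$ and $[v]^0$, and both are empty when $\beta(0)=1$. Otherwise, I would fix $x\in A$ and pick a net $(x_\lambda)$ with $x_\lambda\in[u]^{\eta_\lambda}$, $\eta_\lambda>\beta(0)$, converging to $x$. For each $\lambda$, compactness of $[v]^{\eta_\lambda}$ provides $y_\lambda\in[v]^{\eta_\lambda}\subseteq B$ realising the minimum, so in particular $d(x_\lambda,y_\lambda)\le d_H([u]^{\eta_\lambda},[v]^{\eta_\lambda})\le r$. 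Compactness of $B$ then yields a subnet $y_{\lambda'}\to y\in B$, and joint continuity of $d:X\times X\to\R$ gives $d(x,y)=\lim d(x_{\lambda'},y_{\lambda'})\le r$, whence $\min_{y\in B}d(x,y)\le r$. A symmetric argument starting from a point of $B$ closes the estimate $d_H(A,B)\le r$.

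The main obstacle is precisely this closure case: the unions $\bigcup_{\eta>\beta(0)}[u]^\eta$ are in general not closed, so a pointwise level-comparison does not directly suffice. The remedy is to work with nets, since $X$ need not be first countable; to use the compactness of the $\alpha$-cuts both to realise minimal distances and to extract limits; and to rely on the continuity hypothesis on $d$ to transport the level-wise Hausdorff estimate through the closure operation.
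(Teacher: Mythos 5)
Your proof is correct and follows essentially the same route as the paper: both reduce the claim to the case description of $[\rho\circ u]^\alpha$ from Lemma \ref{ophut}(i), observe that the case selection depends only on $\rho$ and $\alpha$, and identify the closure-of-union case as the only substantive one. Where the paper disposes of that case by invoking the general inequality (\ref{qqaaq}), namely $d_H\big(\overline{\bigcup_{i\in I}A_i},\overline{\bigcup_{i\in I}B_i}\big)\leq\sup_{i\in I}d_H(A_i,B_i)$, without proof, your net-and-compactness argument amounts to a correct proof of exactly the instance of that inequality which is needed.
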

\begin{proof}
 Similarly as in the metric case, we can prove that for every family $A_i,B_i,\;i\in I$ of compact subsets of $X$, with $\overline{\bigcup_{i\in I}A_i},\overline{\bigcup_{i\in I}B_i}$ compact, we have
    \begin{equation}\label{qqaaq}
    d_{H}\big(\overline{\bigcup_{i\in I}A_i},\overline{\bigcup_{i\in I}B_i}\big)\leq\sup_{i\in I}d_H(A_i,B_i).
    \end{equation}
    Hence the assertion follows from Lemma \ref{ophut}, item (i).
\end{proof}
Now if $X$ is a set, $Y\subset X$ its nonempty subset, $u:Y\to[0,1]$ a fuzzy set on $Y$, then let $e(u):X\to [0,1]$ be defined by
\begin{equation}\label{eq:qqq}
    e(u)(x):=\left\{\begin{array}{ccc}u(x)&\mbox{if}&x\in Y\\0&\mbox{if}&x\notin Y\end{array}\right.
\end{equation}
\begin{lemma}\label{lem:ext}
In the above frame, we have
\begin{itemize}
\item[(i)] If $f:X\to X$ is such that $f{\restriction}_{Y}:Y\to Y$ and $u:Y\to [0,1]$, then $f[e(u)]=e(f{\restriction}_{ Y}[u])$.
\item[(ii)] If $\rho:[0,1]\to[0,1]$ is such that $\rho(0)=0$ and $u:Y\to[0,1]$, then $\rho\circ e(u)=e(\rho\circ u)$.
\item[(iii)] If $u_1,...,u_k:Y\to[0,1]$, then $\max\{e(u_1),...,e(u_k)\}=e(\max\{u_1,...,u_k\})$.
\end{itemize}
\end{lemma}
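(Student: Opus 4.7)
The plan is to verify all three identities by direct pointwise computation, using the Zadeh image formula (\ref{f[u]}) together with the defining dichotomy of $e$. The lemma is essentially bookkeeping about how the zero-extension from $Y$ to $X$ interacts with the three operations, and there is no substantive obstacle — the only point that requires a bit of care is handling the empty-preimage branch in part (i).

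For (iii), I would observe that on $Y$ all values $e(u_i)(x)$ coincide with $u_i(x)$, while on $X\setminus Y$ they all equal $0$; hence the pointwise maximum of the $e(u_i)$ matches $e(\max\{u_1,\ldots,u_k\})$ in each case. For (ii), the same dichotomy applies: on $Y$ both $\rho\circ e(u)$ and $e(\rho\circ u)$ evaluate to $\rho(u(x))$, while on $X\setminus Y$ both evaluate to $\rho(0)=0$ by the hypothesis $\rho(0)=0$.

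The one clause that actually exploits the assumption $f{\restriction}_Y:Y\to Y$ is (i). Fix $y\in X$ and split on whether $y\in Y$. If $y\notin Y$, then $e(f{\restriction}_Y[u])(y)=0$ by definition, and any $x\in f^{-1}(y)$ must lie outside $Y$ (otherwise $f(x)=y$ would have to belong to $Y$), so $e(u)$ vanishes on $f^{-1}(y)$ and $f[e(u)](y)=0$ whether or not $f^{-1}(y)$ is empty. If $y\in Y$, I would decompose $f^{-1}(y)=(f^{-1}(y)\cap Y)\cup(f^{-1}(y)\setminus Y)$ and note that $e(u)$ is zero on the second piece and coincides with $u$ on the first; since also $(f{\restriction}_Y)^{-1}(y)=f^{-1}(y)\cap Y$, the nonempty subcase gives
$$f[e(u)](y)=\sup\{e(u)(x):x\in f^{-1}(y)\}=\sup\{u(x):x\in f^{-1}(y)\cap Y\}=f{\restriction}_Y[u](y)=e(f{\restriction}_Y[u])(y),$$
while in the empty subcase $f^{-1}(y)\cap Y=\emptyset$ both sides are $0$ by the respective branches of (\ref{f[u]}) (using the argument of the previous case for $f[e(u)](y)$). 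This exhausts all cases and completes (i), hence the lemma.
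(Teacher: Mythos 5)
Your proof is correct: the paper itself omits the argument as ``simple,'' and your direct pointwise case analysis (splitting on $x\in Y$ versus $x\notin Y$, and in part (i) using $f(Y)\subset Y$ to see that $f^{-1}(y)\subset X\setminus Y$ when $y\notin Y$, and that $(f{\restriction}_Y)^{-1}(y)=f^{-1}(y)\cap Y$ when $y\in Y$) is exactly the intended routine verification. The empty-preimage subcases are handled correctly, so nothing is missing.
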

\begin{proof}(of Theorem \ref{thm:main}).\\
By Theorem \ref{directed multimetrics} and Lemma \ref{rem:2.6}, $\S$ is Edelstein contracting w.r.t. the admissible directed multimetric $\tilde{\D}$. Thus  we can assume that $\D$ is directed and then the topology $\Tau_\D^\F=\Tau_{\D_{HF}}$.\\
Observe first that for every nonempty compact set $K\subset X$ such that $\S(K)\subset K$ and every $u\in\KF(K)$, the natural extension $e(u)\in\KF(X)$, and
\begin{equation}\label{eq:fixext}
    \S_\F(e(u))=e(\tilde{\S}_\F(u)),
\end{equation}
where $\tilde{\S}_\F=(\tilde{\S},\varrho)$ and $\tilde{\S}=(f_i{\restriction}_{ K})_{i=1}^k$ is the restricted IFS. Indeed,
by Lemma \ref{lem:ext}, we have
$$
\S_\F(e(u))=\max\{\rho_i\circ (f_i[e(u)]):i=1,...,k\}=\max\{\rho_i\circ e(f_i{\restriction}_{K}[u]):i=1,...,k\}=
$$
$$
=\max\{e(\rho_i\circ (f_i{\restriction}_{{K}}[u])):i=1,...,k\}=e(\max\{\rho_i\circ (f_i{\restriction}_{{K}}[u]):i=1,...,k\})=e(\tilde{S}_\F(u)).
$$
We are ready to prove the result.
By Theorem \ref{aaa} item (1), the IFS $\S$ is topologically contracting. Hence, by Theorem \ref{aaa}, item (4), we infer that its attractor $A_\S$ is a compact metrizable subset of $X$, and there is an admissible metric $d$ on $A_\S$ making all maps $f_i$ Matkowski contractions.\\
In particular, by Theorem \ref{thm:fuzzymat}, the restricted fuzzy IFS $\hat{\S}_\F=(\hat{\S},\vr)$, where $\hat{S}=(f_i{\restriction}_{ A_\S})_{i=1}^k$, generates a unique fuzzy attractor $u_{\hat{\S}}$.\\
Let $u_*$ be the natural extension of $u_{\hat{\S}}$ to the whole $X$, that is,
$$
    u_*(x)=e(u_{\hat{\S}})(x)=\left\{\begin{array}{ccc}
    u_{\hat{\S}}(x)&\textit{if}&x\in A_\S\\
    0&\textit{if}&x\notin A_\S\end{array}\right..
$$
By (\ref{eq:fixext}), we have
$$
\S_\F(u_*)=\S_\F(e(u_{\hat{S}}))=e(\hat{S}_\F(u_{\hat{S}}))=e(u_{\hat{\S}})=u_*.
$$
Hence it remains to show that for any $u\in\K_\F(X)$, the sequence of iterations $(\S_\F^{(n)}(u))$ converges to $u_*$ w.r.t. the topology $\Tau_{\D_{HF}}$.\\
Take any $u\in \KF(X)$. As $\S$ is topologically contracting, we can find a compact set $K$ such  that $[u]^0\subset K$ and $\S(K)\subset K$. Then automatically $A_\S\subset K$ by uniqueness of an attractor of a topologically contracting IFS.
 Let $\tilde{\S}_\F=(\tilde{\S},\vr)$ for the restricted IFS $\tilde{\S}=(f_i{\restriction}_K)_{i=1}^k$. By (\ref{eq:fixext}) and the fact that $e(u{\restriction}_K)=u$, we have $
e(\tilde{\S}_\F(u{\restriction}_K))=\S_\F(u).$ Then, by an inductive argument, we see that for any $n\in\N$, \begin{equation}\label{eq:ooo}
e(\tilde{\S}_\F^{(n)}(u{\restriction}_K))=\S_\F^{(n)}(u).\end{equation}
In particular, $[\S_\F^{(n)}(u)]^0\subset K$ for every $n\in\N$.

Now fix any $d\in\D$.
By Theorem \ref{aaa} item (2) and the last part or Remark \ref{rem:multimetric}, we have that the restricted IFS $\tilde{\S}=(f_i{\restriction}_{ K})_{i=1}^k$ is Matkowski contracting w.r.t. $d_{K\times K}$ (alternatively, we can use  \cite[Proposition 5.3]{BKNNS}, item (e)). More precisely, there is a comparison function $\varphi_d$ such that for every $i=1,...,k$ and $x,y\in K$, we have
$$
d(f_i(x),f_i(y))\leq \varphi_d(d(x,y)).
$$
Take any $v\in\K_\F(X)$ with $[v]^0\subset K$. Then also $[\S_\F^{(n)}(v)]^0\subset K$ for every $n\in\N$. By Lemmas \ref{ophut}, \ref{lem:5.15} and \ref{lem:3.17}, using condition (\ref{qqaaq}) and again the convention that $d_H(\emptyset,\emptyset)=0$, we have
$$
d_{HF}(\S_\F(u),\S_\F(v))=
\sup_{\alpha\in[0,1]}d_H\big(\bigcup_{i=1}^kf_{i}([\vr_i\circ u]^\alpha),
\bigcup_{i=1}^kf_{i}([\vr_i\circ v]^\alpha\big)\leq
$$
$$
\leq\sup_{\alpha\in[0,1]}\max_{i=1,...,k}
d_H\big(f_{i}([\vr_i\circ u]^\alpha),
f_{i}([\vr_i\circ v]^\alpha)\big)\leq
\sup_{\alpha\in[0,1]}\max_{i=1,...,k}
\varphi_d\big(d_H([\vr_i\circ u]^\alpha,[\vr_i\circ v]^\alpha)\big)\leq
$$
$$
\leq\sup_{\alpha\in[0,1]}\max_{i=1,...,k}\varphi_d\big(d_{HF}(u,v)\big)=\varphi_d\big(d_{HF}(u,v)\big).
$$
Using inductive argument,  inclusions $[\S_\F^{(n)}(u)]^0,[\S_\F^{(n)}(v)]^0\subset K$, and setting $v=u_*$, we have
$$
d_{HF}\big(\S_\F^{(n)}(u),u_*)=d_{HF}\big(\S_\F^{(n)}(u),\S_\F^{(n)}(u_*))\leq \varphi_d^{(n)}\big(d_{HF}(u,u_*)\big)\to 0.
$$
As $d\in\D$ was taken arbitrarily, this implies that $\S_\F^{(n)}(u)\to u_*$ in $\Tau_{\D_{HF}}=\Tau_\D^\F$.
\end{proof}

\subsection{Fuzzy attractors of IFSs on Tychonoff spaces}\label{sec:Tych}
In this section we will reformulate the multimetric setting to the framework of Tychonoff spaces. 

Assume that $X$ is a Tychonoff space. By Remark \ref{rem:multimetric}, its topology $\tau$ is generated by some multimetric $\D$, so we can consider the canonical topology $\Tau_\D^\F$ on $\KF(X)$. This topology does not depend on the choice of admissible multimetric $\D$. Indeed, if $\G$ is another multimetric with $\Tau_\D=\Tau_\G$ and we find directed multimetrics ${\D'}$, ${\G'}$ with $\Tau_\D=\Tau_{{\D'}}$ and $\Tau_\G=\Tau_{{\G'}}$, then $\Tau_{\D'}=\Tau_{\G'}$, so $\Tau_{\D}^\F=\Tau_{\D'_{HF}}=\Tau_{\G'_{HF}}=\Tau_{\G}^\F$. Denote this topology on $\KF(X)$ by $\Tau_M$ (to underline that it comes from multimetrics).

\begin{definition}\emph{
    If $X$ is a Tychonoff space, then the topology $\Tau_M$ on $\KF(X)$, the canonical topology generated by any admissible directed multimetric $\D$ on $X$, will be called as the }M-topology on $\KF(X)$.
\end{definition}

\begin{remark}\emph{
In fact, the $M$-topology is generated by the natural uniformity on $\KF(X)$ which is induced by the universal uniformity on $X$, i.e., the maximal uniformity generating the topology of $X$. This universal uniformity on $X$ is generated by the family of all continuous pseudometrics on $X$.
 Thus we can also call the M-topology also as} strong uniform topology\emph{ on $\KF(X)$. We plan to study this approach to topologization of $\KF(X)$ in a forthcoming paper.}
\end{remark}

\begin{definition}\emph{
Let $X$ be a Tychonoff space and $\S_\F$ be a fuzzy IFS on $X$.
    A (necessarily unique) compact fuzzy set $u_\S$ is a } fuzzy attractor \emph{of a fuzzy IFS $\S_\F$, if
    \begin{itemize}
        \item[(i)] $u_\S=\S_\F(u_\S)$;
        \item[(ii)] for every $u\in\KF(X)$, the sequence of iterations $(\S_\F^{(n)}(u))$ of the fuzzy Hutchinson operator converges to $u_\S$ w.r.t. the M-topology $\Tau_{M}$.
    \end{itemize}
    } 
\end{definition}
\begin{remark}
    \emph{
Directly by definition, we see that a fuzzy set $u_\S$ is a fuzzy attractor of a fuzzy IFS $\S_\F$ on a Tychonoff space $X$ if and only if it is a fuzzy attractor w.r.t. some (equivalently, any) admissible multimetric $\D$ on $X$.}
\end{remark}
Now, Theorem \ref{thm:main} can be reformulated in the following way:
\begin{theorem}\label{thm:main3}
    Let $X$ be a Tychonoff space and $\S_\F=(\S,\vr)$ be a fuzzy IFS on $X$. If $\S$ is Edelstein contracting and compactly dominating w.r.t. some admissible multimetric $\D$ on $X$, then $\S$ admits a fuzzy attractor.
\end{theorem}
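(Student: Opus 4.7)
The proof is essentially an invocation of Theorem \ref{thm:main} combined with the observation that the $M$-topology on $\KF(X)$ coincides with the canonical topology $\Tau_\D^\F$ generated by any admissible (directed) multimetric $\D$ on $X$. So my plan is to reduce the Tychonoff-space statement directly to its multimetric counterpart without redoing any of the analytic work.

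First, I would fix an admissible multimetric $\D$ on $X$ witnessing the hypotheses, i.e., such that $\S$ is compactly dominating and Edelstein contracting w.r.t.\ $\D$. Such a $\D$ exists by assumption, and the topology on $X$ induced by $\D$ is exactly the original Tychonoff topology. I can then apply Theorem \ref{thm:main} to the multimetric space $(X,\D)$ and the fuzzy IFS $\S_\F=(\S,\vr)$, obtaining a unique fuzzy set $u_\S\in\KF(X)$ with $\S_\F(u_\S)=u_\S$ such that for every $u\in\KF(X)$, the iterations $\S_\F^{(n)}(u)$ converge to $u_\S$ with respect to the canonical topology $\Tau_\D^\F$.

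Next, I would argue that this $u_\S$ satisfies the Tychonoff-space definition of a fuzzy attractor. The only thing to check is condition (ii) of the new definition, namely that convergence $\S_\F^{(n)}(u)\to u_\S$ holds w.r.t.\ the $M$-topology $\Tau_M$. But by the paragraph preceding the definition of $\Tau_M$, and by Theorem \ref{th:rownosc} together with Theorem \ref{directed multimetrics}, the $M$-topology is defined to be $\Tau_{\D'}^\F$ for any directed multimetric $\D'$ inducing the topology of $X$, and this definition is independent of the choice of $\D'$. In particular, taking $\D'=\tilde{\D}$ (the directed closure from Theorem \ref{directed multimetrics}), which induces the same topology as $\D$, we get $\Tau_M=\Tau_{\tilde{\D}}^\F=\Tau_\D^\F$. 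Hence the convergence secured by Theorem \ref{thm:main} is exactly convergence in $\Tau_M$.

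Finally, uniqueness of $u_\S$ in the Tychonoff-space sense follows immediately from uniqueness in the multimetric sense together with the fact (also from the preceding remark) that the notions of fuzzy attractor w.r.t.\ any admissible multimetric $\D$ and w.r.t.\ $\Tau_M$ coincide. There is no real obstacle here; the only thing one has to be careful about is that Theorem \ref{thm:main} is stated for the canonical topology $\Tau_\D^\F$ (obtained after passing to a directed enlargement), not for the possibly coarser topology $\Tau_{\D_{HF}}$ generated by $\D_{HF}$ itself, so the invocation must be made via $\tilde{\D}$; this is harmless because, by Lemma \ref{rem:2.6}, Edelstein contractivity w.r.t.\ $\D$ transfers to Edelstein contractivity w.r.t.\ $\tilde{\D}$, and compact dominancy is a purely topological property and hence is preserved.
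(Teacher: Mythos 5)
Your proposal is correct and matches the paper's (essentially implicit) argument: Theorem \ref{thm:main3} is stated there as a direct reformulation of Theorem \ref{thm:main}, justified by the fact that the $M$-topology coincides with the canonical topology $\Tau_\D^\F$ for any admissible multimetric $\D$, which is exactly the reduction you carry out. Your extra remark about passing to $\tilde{\D}$ via Lemma \ref{rem:2.6} is harmless but redundant, since that step is already performed inside the proof of Theorem \ref{thm:main}.
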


It turns out that for so-called $k$-spaces (see \cite[Section 3.3]{En} or \cite[p. 1035]{BKNNS}), we can strengthen the above theorem. Recall that a Hausdorff topological space $X$ is called a \emph{$k$-space}, if its topology is determined by compact sets, in the sense that a subset $U\subset X$ is open if and only if $K\cap U$ is open in $K$ for every compact subset $K\subset X$ (see \cite[3.3.19]{En}). All locally compact spaces and all first countable spaces are $k$-spaces (see \cite[p. 152 and 3.3.20]{En}).
\begin{theorem}\label{thm:main4}
    Let $X$ be a Tychonoff $k$-space and $\S_\F=(\S,\vr)$ be a fuzzy IFS on $X$. If $\S$ is topologically contracting, then $\S_\F$ admits a fuzzy attractor.
\end{theorem}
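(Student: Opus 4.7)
The plan is to reduce Theorem \ref{thm:main4} to the already established Theorem \ref{thm:main3} by producing an admissible multimetric $\D$ on $X$ with respect to which $\S$ is both compactly dominating and Edelstein contracting. Compact dominancy is free, since topological contractivity is defined to include it, so the real task is to exhibit an admissible $\D$ witnessing Edelstein contractivity of all $f_i$ on $X$.

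The natural approach is to glue local multimetrics into a global one. By Theorem \ref{aaa}(3), for every compact invariant set $K\in\K(X)$ (those with $\S(K)\subset K$) there is an admissible multimetric $\D_K$ on $K$ making the restricted IFS $(f_i{\restriction}_{K})_{i=1}^k$ Matkowski (in particular, Edelstein) contracting. For each $d\in\D_K$ one would construct a continuous pseudometric $\tilde d$ on all of $X$ whose restriction to $K$ still witnesses the Edelstein inequality $d(f_i(x),f_i(y))<d(x,y)$. The $k$-space hypothesis is decisive here: on a $k$-space a real-valued function (hence a pseudometric) is continuous iff its restriction to every compact subset is continuous, so pseudometrics built compact-piece by compact-piece combine into genuine continuous pseudometrics on $X$. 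Collecting such $\tilde d$ as $K$ ranges over compact invariant sets, and augmenting with any admissible multimetric generating the Tychonoff topology of $X$, produces an admissible multimetric $\D$ on $X$. For any distinct $x,y\in X$, compact dominancy places $\{x,y\}$ into some invariant $K$, and the corresponding extension $\tilde d$ then witnesses Edelstein strict decrease for each $f_i$, so $\S$ is globally Edelstein contracting w.r.t. $\D$. Theorem \ref{thm:main3} then supplies the fuzzy attractor.

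The main obstacle is precisely the gluing/extension step: producing $\tilde d$ so that it is a bona fide continuous pseudometric on $X$ (naive extensions such as setting it to $0$ off $K$ violate the triangle inequality) while also preserving the strict Edelstein decrease for the $f_i$. This is not a routine calculation but precisely the sort of remetrization argument developed in \cite{BKNNS} and \cite{MM} that underlies Theorem \ref{aaa}. In effect, the claim "on a Tychonoff $k$-space, topological contractivity implies compact dominancy together with Edelstein contractivity w.r.t. some admissible multimetric" — the converse of Theorem \ref{aaa}(1) in the $k$-space setting — is the converse direction of \cite[Theorem 5.8]{BKNNS}, and once that is invoked, Theorem \ref{thm:main4} follows immediately from Theorem \ref{thm:main3}.
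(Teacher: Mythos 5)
Your proposal is correct and follows essentially the same route as the paper: the paper's proof simply invokes the equivalence, valid for Tychonoff $k$-spaces, between topological contractivity and Edelstein contractivity with respect to some admissible multimetric (citing \cite[Theorem~6.3]{BKNNS}, equivalence $(b)\Leftrightarrow(d)$) and then applies Theorem~\ref{thm:main3}, which is exactly the reduction you arrive at in your final paragraph. Your intermediate sketch of gluing local pseudometrics via the $k$-space property is an attempt to reprove that cited remetrization result rather than a new argument, and, as you acknowledge, the substance lies in \cite{BKNNS}.
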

    
\begin{proof}
By \cite[Theorem 6.3]{BKNNS}, equivalence $(b)\Leftrightarrow(d)$, the IFS $\S$ is Edelstein contracting w.r.t. some admissible multimetric $\D$ on $X$. Hence the result follows from Theorem \ref{thm:main3}.
\end{proof}

\section{Fuzzy IFSs and their attractors on Hausdorff spaces}\label{sec:other}

In the previous discussion, we restricted our framework to Tychonoff spaces $X$ and to M-topologies on $\KF(X)$ generated by directed admissible multimetrics on $\D$. 
In this section, we will extend the setting to fuzzy IFSs on Hausdorff topological spaces. 

\subsection{Topology induced by hypographs}\label{sec:hypo}
Let $X$ be a Hausdorff topological space.
We can define the topology on $\KF(X)$ by identifying fuzzy sets $u\in\K(X)$ with their hypographs
    \begin{equation}\label{eq:hypo}
         \on{hypo}(u):=\{(x,\alpha):x\in[u]^0,\;0\leq\alpha\leq u(x)\}
    \end{equation}
as compact subsets of $X\times[0,1]$.  Thus, we can consider the topology $\Tau_{h}$ on $\KF(X)$ that consists of sets of the form
$$
    \{u\in\KF(X):\on{hypo}(u)\in U\},
$$
where $U$ runs through all open subsets of $\K(X\times [0,1])$; we will call this topology as \emph{$h$-topology}.

Similar approach is discussed for example in \cite{Beer} or \cite{Yang} for a bit different way of defining a hypograph, namely, by
$$
\on{hypo}_0(u):=\{(x,\alpha):x\in X,\;0\leq\alpha\leq u(x)\}
$$
(we will discuss this approach later, in Section \ref{sec:5}). Let us just remark that the $h$-topology $\Tau_h$ is well defined as for any $u\in\KF(X)$, the set $\on{hypo}(u)$ is a compact as intersection of a closed set $\on{hypo}_0(u)$ (see \cite[Proposition 8.3]{Choc} and switch from lsc to usc maps) with a compact set $[u]^0\times [0,1]$. Moreover, $\Tau_h$ is Hausdorff since the Vietoris topology on $\K(X\times[0,1])$ is Hausdorff. Finally, observe that the topology $\Tau_h$ is metrizable if $X$ is metrizable, by a metric $d_h(u,v):=d_H(\on{hypo}(u),\on{hypo}(v))$, where $d$ is any admissible metric on $X\times [0,1]$.

Below we show that for Tychonoff spaces, the  M-topology $\Tau_M$ is always stronger than the $h$-topology $\Tau_h$.

\begin{theorem}\label{thm:zawieranie2}
    If $X$ is a Tychonoff topological space, then $\Tau_h\subset \Tau_M$.
\end{theorem}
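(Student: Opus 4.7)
The plan is to show that the hypograph map $\on{hypo}\colon(\KF(X),\Tau_M)\to(\K(X\times[0,1]),\Tau_V)$ is continuous; since $\Tau_h$ is by definition the initial topology for this map, continuity immediately yields $\Tau_h\subset\Tau_M$.

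First I would fix a directed admissible multimetric $\D$ on $X$ (exists by Remark \ref{rem:multimetric} combined with Theorem \ref{directed multimetrics}(1)), giving $\Tau_M=\Tau_{\D_{HF}}$. Then I form the natural product multimetric $\D':=\{d':d\in\D\}$ on $X\times[0,1]$, where $d'((x,\alpha),(y,\beta)):=\max\{d(x,y),|\alpha-\beta|\}$. This $\D'$ is clearly directed and generates the product topology on $X\times[0,1]$, so by Theorem \ref{directed multimetrics}(2) its Hausdorff multimetric $\D'_H$ generates the Vietoris topology on $\K(X\times[0,1])$.

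The heart of the argument is the Lipschitz-type estimate
$$d'_H(\on{hypo}(u),\on{hypo}(v))\leq d_{HF}(u,v)\quad\text{for every } u,v\in\KF(X) \text{ and } d\in\D.$$
To prove it, I take an arbitrary $(x,\alpha)\in\on{hypo}(u)$ and split into two cases. If $\alpha>0$, then $x\in[u]^\alpha$; since $v$ is normal, $[v]^\alpha\neq\emptyset$ is compact, so I can pick $y\in[v]^\alpha$ with $d(x,y)\leq d_H([u]^\alpha,[v]^\alpha)\leq d_{HF}(u,v)$, and then $(y,\alpha)\in\on{hypo}(v)$ with $d'((x,\alpha),(y,\alpha))=d(x,y)\leq d_{HF}(u,v)$. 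If $\alpha=0$, then $x\in[u]^0$ and I choose $y\in[v]^0$ with $d(x,y)\leq d_H([u]^0,[v]^0)\leq d_{HF}(u,v)$, observing that $(y,0)\in\on{hypo}(v)$. Swapping the roles of $u$ and $v$ handles the other half of the Hausdorff distance, and taking suprema finishes the estimate.

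Given the estimate, for every $u_0\in\KF(X)$ and every basic Vietoris neighborhood $B_{d'_H}(\on{hypo}(u_0),\varepsilon)$ the $\Tau_M$-ball $B_{d_{HF}}(u_0,\varepsilon)$ is mapped by $\on{hypo}$ into it; since such Vietoris balls form a neighborhood base for $\Tau_V$ at $\on{hypo}(u_0)$, continuity of $\on{hypo}$ at $u_0$ follows, whence $\Tau_h\subset\Tau_M$. The main obstacle is the Lipschitz estimate itself, and within it the bookkeeping at $\alpha=0$, where one must pass from a point of the support $[u]^0$ to a point of $[v]^0$ rather than of a strict $\alpha$-cut; the convention that $\alpha$ runs over the closed interval $[0,1]$ in the definition of $d_{HF}$ is exactly what makes this step work.
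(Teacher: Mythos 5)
Your proof is correct, but it takes a genuinely different route from the paper's. The paper argues directly with the subbasic Vietoris sets $\langle U\rangle^{-}$ and $\langle U\rangle^{+}$ on $\K(X\times[0,1])$: for the first it locates a single basic box $B_d(y_0,\varepsilon)\times((a,b)\cap[0,1])$ inside $U$, and for the second it runs a finite-subcover (Lebesgue-number type) argument over the compact set $\on{hypo}(u_0)$, using directedness of $\D$ to extract one pseudometric $d$ and one radius $\varepsilon$ that work. You instead prove the global $1$-Lipschitz estimate $d'_H(\on{hypo}(u),\on{hypo}(v))\le d_{HF}(u,v)$ for the product pseudometrics $d'=\max\{d,|\cdot-\cdot|\}$, and then reduce the identification of the Vietoris topology on $\K(X\times[0,1])$ with $\Tau_{\D'_H}$ to Theorem \ref{directed multimetrics}(2). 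This is cleaner and more quantitative: it packages the covering argument into the already-proved hyperspace theorem and exhibits $\on{hypo}$ as a nonexpansive map pseudometric-by-pseudometric, which is potentially reusable elsewhere. The small extra obligations you incur --- that $\D'$ is a directed multimetric generating the product topology on $X\times[0,1]$ (so that balls $B_{d'_H}(\cdot,\varepsilon)$ form a neighborhood base for $\Tau_V$), and the case split at $\alpha=0$ where one works with supports rather than strict $\alpha$-cuts --- are all handled correctly; the only cosmetic caveat is the degenerate situation where $\D$ is empty (a one-point $X$), which is fixed by adjoining the zero pseudometric so that $\D'$ still separates the $[0,1]$-coordinate.
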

    \begin{proof}
    Fix an admissible directed multimetric $\D$ on $X$. Then $\Tau_M=\Tau_{\D_{HF}}$.
        Fix any fuzzy set $u_0\in\KF(X)$ and its subbasic neighbourhood  of one of the forms $$\langle U\rangle^+=\{u\in\KF(X):\on{hypo}(u)\subset U \}\;\;\mbox{or}\;\;\langle U\rangle ^-=\{u\in\KF(X):\on{hypo}(u)\cap U\neq \emptyset\}$$ for some non-empty open set $U$ in $X\times [0,1]$. 
        We will show that in each case, there is a pseudometric $d\in\D$ and $\varepsilon>0$ so that $B_{d_{HF}}(u_0,\varepsilon)$ is contained in this neighborhood.\\$\;$\\
       First we deal with the case $\langle U\rangle^-$.\\
       Since $u_0\in \langle U\rangle^-$, there exists $(y_0,\alpha_0)\in U$ so that $y_0\in[u]^0$ and $0\leq \alpha_0\leq u_0(\alpha_0)$, and thus $y_0\in [u_0]^{\alpha_0}$. As $U$ is open, by the fact that $\D$ is directed, there exists $\varepsilon>0$, $d\in\D$ and $a<b$, such that 
       $$
       (y_0,\alpha_0)\in B_d(y_0,\varepsilon)\times((a,b)\cap[0,1])\subset U.
       $$
       We will show that
       $$
       B_{d_{HF}}(u_0,\varepsilon)\subset \langle U\rangle^-.
       $$
       Take any $u\in B_{d_{HF}}(u_0,\varepsilon)$. As $d_{HF}(u,u_0)<\varepsilon$, we have $d_H([u]^{\alpha_0},[u_0]^{\alpha_0})<\varepsilon$. As $y_0\in [u_0]^{\alpha_0}$, we find $y\in [u]^{\alpha_0}$ with $d(y,y_0)<\varepsilon$. Since $\alpha_0\leq u(y)$ and $y\in[u]^0$, it follows $(y,\alpha_0)\in\on{hypo}(u)\cap U$ and hence $u\in \langle U\rangle^-$.\\$\;$\\
       Now consider the case $\langle U\rangle^+$.\\
       Since $u_0\in \langle U\rangle^+$ and $U$ is open, for every $(x,\alpha)\in\on{hypo}(u_0)$, there exists $d_{x,\alpha}\in\D$, $\varepsilon_{x,\alpha}>0$ and $a_{x,\alpha}<b_{x,\alpha}$ so that
       \begin{equation}\label{aa11}
       (x,\alpha)\in B_{d_{x,\alpha}}(x,\varepsilon_{x,\alpha})\times((a_{x,\alpha},b_{x,\alpha})\cap[0,1])\subset U.
       \end{equation}
       Set $I_{x,\alpha}:=(a_{x,\alpha},b_{x,\alpha})\cap[0,1]$ for $(x,\alpha)\in\on{hypo}(u_0)$. The family
       $$
       \big\{B_{d_{x,\alpha}}\big(x,\frac{\varepsilon_{x,\alpha}}{2}\big)\times I_{x,\alpha}:(x,\alpha)\in\on{hypo}(u_0)\big\}
       $$
       is an open cover of a compact set $\on{hypo}(u_0)$. Hence, we can find its finite subcover 
       $$B_{d_{x_i,\alpha_i}}\big(x_i,\frac{\varepsilon_{x_i,\alpha_i}}{2}\big)\times I_{x_i,\alpha_i},\;\;i=1,...,k.$$
       Since $\D$ is directed, we can find $d\in\D$ with $d\geq\max\{d_{x_i,\alpha_i}:i=1,...,k\}$. Then set $\varepsilon:=\frac{1}{2}\min\{\varepsilon_{x_i,\alpha_i}:i=1,...,k\}$. We will show that
       $$
       B_{d_{HF}}(u_0,\varepsilon)\subset \langle U\rangle^+.
       $$
       Take any $u\in B_{d_{HF}}(u_0,\varepsilon)$ and choose $(y,\alpha)\in\on{hypo}(u)$. Then $y\in[u]^\alpha$. As $d_{HF}(u,u_0)<\varepsilon$, we have $d_H([u]^\alpha,[u_0]^\alpha)<\varepsilon$. In particular, there exists $x\in[u_0]^\alpha$ with $d(x,y)<\varepsilon$.\\
       Since $(x,\alpha)\in \on{hypo}(u_0)$, we find $i=1,...,k$ so that 
       $$
       (x,\alpha)\in B_{d_{x_i,\alpha_i}}\big(x_i,\frac{\varepsilon_{x_i,\alpha_i}}{2}\big)\times I_{x_i,\alpha_i}.
       $$
       Then $\alpha\in I_{x_i,\alpha_i}$ and
       $$
       d_{x_i,\alpha_i}(y,x_i)\leq d_{x_i,\alpha_i}(y,x)+d_{x_i,\alpha_i}(x,x_i)\leq d(y,x)+d_{x_i,\alpha_i}(x,x_i)
       <\varepsilon+\frac{\varepsilon_{x_i,\alpha_i}}{2}\leq \frac{\varepsilon_{x_i,\alpha_i}}{2}+\frac{\varepsilon_{x_i,\alpha_i}}{2}=\varepsilon_{x_i,\alpha_i}.
       $$
       Hence, $y\in B_{d_{x_i,\alpha_i}}\big(x_i,\varepsilon_{x_i,\alpha_i}\big)$ and thus by (\ref{aa11})
       $$
       (y,\alpha)\in B_{d_{x_i,\alpha_i}}\big(x_i,\varepsilon_{x_i,\alpha_i}\big)\times I_{x_i,\alpha_i}\subset U.
       $$
       All in all, $\on{hypo}(u)\subset U$, meaning that $u\in \langle U\rangle^+$.
    \end{proof}

        The inclusion in Theorem \ref{thm:zawieranie2} cannot be strengthened to equality as the following example shows.
        
        \begin{exmp}\label{ex:Tau u neq Tau DHF 1}
        Let $X=[0,1]$ be endowed with the Euclidean metric $d$. Then the $d_{HF}$ induces the $M$-topology on $\KF(X)$. Now consider the following fuzzy sets
        \[
        u_n(x):=\begin{cases}
            1-x, & x\in[0,\frac{1}{n}]\\
            1-\frac{1}{n}, & x\in (\frac{1}{n},1]
        \end{cases}\;\;\;\mbox{for }n\in\N,\;\;\mbox{and}\;\;u_0(x):=1.
        \]
        Then $u_n\to u_0$ in the $h$-topology, as obviously $\on{hypo}(u_n)\to\on{hypo}(u_0)$ in the Vietoris topology on the square $[0,1]^2$ (equivalently, $d_h(\on{hypo}(u_n),\on{hypo}(u_0))\to 0$). On the other hand, $u_n{\nrightarrow} u_0$ in the M-topology as
        \[
        \acut[1]{u_n}=\{0\}\text{ and } \acut[1]{u_0}=[0,1],
        \]
        and hence $d_{HF}(u_n,u_0)\geqslant 1 \nrightarrow 0$.
    \end{exmp}

\subsection{Fuzzy attractors of fuzzy IFSs on Hausdorff topological spaces}\label{sec:TFIFS}
Let $X$ be a Tychonoff space and $\S_\F=(\S,\vr)$ be a fuzzy IFS on $X$. If $u_\S$ is its fuzzy attractor, then by Theorem \ref{thm:zawieranie2}, for any $u\in\KF(X)$, the sequence $\S_\F^{(n)}(u)$ converges to $u_\S$ w.r.t. the $h$-topology $\Tau_h$. Hence and by Example \ref{ex:Tau u neq Tau DHF 1}, the M-topology $\Tau_M$ is better than the $h$-topology $\Tau_h$ from the perspective of properties of fuzzy invariant sets. However, we can use topology $\Tau_h$ to consider fuzzy attractors of IFSs on Hausdorff spaces.

\begin{definition}\emph{
Let $X$ be a Hausdorff topological space and $\S_\F$ be a fuzzy IFS on $X$.
    A (necessarily unique) compact fuzzy set $u_\S$ is a }weak fuzzy attractor \emph{of a fuzzy IFS $\S_\F$, if
    \begin{itemize}
        \item[(i)] $u_\S=\S_\F(u_\S)$;
        \item[(ii)] for every $u\in\KF(X)$, the sequence of iterations $(\S_\F^{(n)}(u))$ converges to $u_\S$ w.r.t. the $h$-topology $\Tau_{h}$.
    \end{itemize}
    } 
\end{definition}
Clearly, if $X$ is Tychonoff space, then each fuzzy attractor is also a weak fuzzy attractor, as $\Tau_h\subset\Tau_M$.
\begin{theorem}\label{thm:main10}
    Assume that $\S_\F=(\S,\vr)$ is a fuzzy IFS on a Hausdorff topological space $X$ so that $\S$ is topologically contracting. Then $\S_\F$ admits a weak fuzzy attractor.
\end{theorem}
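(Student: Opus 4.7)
The plan is to produce the candidate weak fuzzy attractor $u_*$ by zero-extending the (metric) fuzzy attractor that lives on $A_\S$, and then to transport convergence from a compact invariant subspace $K$---where Theorem \ref{thm:main} already applies---up to the $h$-topology on $\KF(X)$ via Theorem \ref{thm:zawieranie2} and Lemma \ref{lem:subViet}.

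First I would construct the fixed point. Since $\S$ is topologically contracting, Theorem \ref{TIFS} yields the (metrizable) attractor $A_\S$, and Theorem \ref{aaa}(4) produces an admissible metric $d$ on $A_\S$ for which the restricted IFS $\hat{\S}=(f_i{\restriction}_{A_\S})_{i=1}^k$ is Matkowski contracting. Applying Theorem \ref{thm:fuzzymat} to $\hat{\S}_\F=(\hat{\S},\vr)$ gives a (unique) fuzzy attractor $u_{\hat{\S}}\in\KF(A_\S)$. I would then set $u_*:=e(u_{\hat{\S}})$, its extension by zero to the whole of $X$; one checks directly that $u_*\in\KF(X)$ (using that $A_\S$ is closed in $X$ and that $[u_*]^\alpha=[u_{\hat{\S}}]^\alpha$ for $\alpha>0$), and the identity $\S_\F(u_*)=u_*$ is obtained exactly as in the derivation of (\ref{eq:fixext}) in the proof of Theorem \ref{thm:main}, via Lemma \ref{lem:ext}.

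Next I would verify convergence. Fix an arbitrary $u\in\KF(X)$. By compact dominance, pick a compact $K\subset X$ with $[u]^0\cup A_\S\subset K$ and $\S(K)\subset K$; then $[\S_\F^{(n)}(u)]^0\subset K$ for every $n\in\N$, and the restriction $\tilde{\S}=(f_i{\restriction}_K)_{i=1}^k$ is a topologically contracting IFS on the compact Hausdorff (hence Tychonoff) space $K$. By Theorem \ref{aaa}(3) choose an admissible directed multimetric $\D$ on $K$ that makes $\tilde{\S}$ Matkowski contracting, and apply Theorem \ref{thm:main} to $(K,\D)$ to obtain a unique fuzzy attractor of $\tilde{\S}_\F$. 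Using (\ref{eq:fixext}) for the pair $K\subset X$, together with the injectivity of the zero-extension, the relation $\S_\F(u_*)=u_*$ forces $\tilde{\S}_\F(u_*{\restriction}_K)=u_*{\restriction}_K$, so by uniqueness this fuzzy attractor on $K$ must equal $u_*{\restriction}_K$. Combined with (\ref{eq:ooo}), we conclude $\tilde{\S}_\F^{(n)}(u{\restriction}_K)\to u_*{\restriction}_K$ in the canonical topology $\Tau_\D^\F$ on $\KF(K)$.

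Finally I would transfer this convergence to $\Tau_h$ on $\KF(X)$. By Theorem \ref{thm:zawieranie2} applied to the Tychonoff space $K$, the inclusion $\Tau_h\subset\Tau_\D^\F$ holds on $\KF(K)$, so the convergence above also holds in $\Tau_h$ on $\KF(K)$; equivalently, $\on{hypo}(\tilde{\S}_\F^{(n)}(u{\restriction}_K))\to\on{hypo}(u_*{\restriction}_K)$ in the Vietoris topology on $\K(K\times[0,1])$. The key observation is that for every $v\in\KF(K)$ one has $\on{hypo}(e(v))=\on{hypo}(v)$ as subsets of $X\times[0,1]$, because $[e(v)]^0=[v]^0\subset K$. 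Hence the same convergence reads $\on{hypo}(\S_\F^{(n)}(u))\to\on{hypo}(u_*)$ inside $\K(K\times[0,1])$, and Lemma \ref{lem:subViet} lifts it to $\K(X\times[0,1])$, giving $\S_\F^{(n)}(u)\to u_*$ in $\Tau_h$ on $\KF(X)$. Uniqueness of the weak fuzzy attractor then follows from $\Tau_h$ being Hausdorff. The main delicate point in executing this plan is the identification of the fuzzy attractor of $\tilde{\S}_\F$ on $K$ with $u_*{\restriction}_K$ and the hypograph equality under zero-extension; everything else amounts to a careful assembly of results already established.
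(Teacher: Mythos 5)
Your proposal is correct and follows essentially the same route as the paper: zero-extend the metric fuzzy attractor from $A_\S$ to get the fixed point $u_*$, pass to a compact invariant $K\supset[u]^0$ where the multimetric result yields convergence of $\tilde{\S}_\F^{(n)}(u{\restriction}_K)$ to $u_*{\restriction}_K$ in the M-topology, and transfer to $\Tau_h$ on $\KF(X)$ via Theorem \ref{thm:zawieranie2}, the hypograph identity $\on{hypo}(e(v))=\on{hypo}(v)$, and Lemma \ref{lem:subViet}. The only cosmetic difference is that you invoke Theorem \ref{aaa}(3) together with Theorem \ref{thm:main} directly on $K$, whereas the paper cites Theorem \ref{thm:main4} (which is itself a corollary of the same result), so the two arguments coincide in substance.
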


\begin{proof}
Let $\hat{\S}_\F=(\hat{\S},\vr)$ be the restricted fuzzy IFS, where $\hat{\S}=(f_i{\restriction}_{A_\S})_{i=1}^k$ and $A_\S$ is an attractor of $\S$, and let $u_{\hat{\S}}$ be a fuzzy attractor of $\hat{\S}_\F$. In the same way as in the proof of Theorem \ref{thm:main} we can show that 
$$
\S_\F(u_*)=u_*,
$$
where $u_*$ is the canonical extension $u_*=e(u_{\hat{\S}})$ (see condition (\ref{eq:qqq}). Indeed, it is easy to see that condition (\ref{eq:fixext}) and further reasoning hold true in a current setting.

Take any $u\in\KF(X)$. By compact dominancy, there exists $K\in\K(X)$ such that $[u]^0\subset K$ and $\S(K)\subset K$. Again, we automatically have that $A_\S\subset K$. By Theorem \ref{thm:main4} and a fact that compact spaces are $k$-spaces, the restricted fuzzy IFS $\tilde{\S}_{\F}=(\tilde{\S},\vr)$, where $\tilde{\S}=(f_i{\restriction}_{K})_{i=1}^k$, admits a fuzzy attractor which necessarily is the canonical extension of $u_{\hat{\S}}$ to the set $K$, or, equivalently, is the restriction $u_*{\restriction}_K$. Indeed, we can follow the same lines as in the beginning of the proof, but starting with the fuzzy IFS $\tilde{\S}_\F$ instead of $\S_\F$, and get that the canonical extension of $u_{\hat{\S}}$ to the set $K$ is $\tilde{\S}_\F$ invariant (it is also worth to note that $A_\S$ is also the attractor of $\tilde{\S}$, so all constructions are the same). 

In particular, the sequence of iterations $\tilde{\S}_{\F}^{(n)}(u{\restriction}_{ K})$ converges to $u_*{\restriction}_K$ w.r.t. M-topology on $\KF(K)$ and, by Theorem \ref{thm:zawieranie2}, w.r.t. the $h$-topology $\Tau^K_h$ on $\KF(K)$ (we added upper index $K$ to clarify that we deal with the space $\KF(K)$).\\
Take any $W\in\Tau_h$ on $\KF(X)$ that contains $u_*$. Then for some open (in the Vietoris topology) set $W'\in\K(X\times [0,1])$, we have
$$
W=\{v\in\KF(X):\on{hypo}(v)\in W'\}
$$
and also $\on{hypo}(u_*)\in W'$. Then $V':=W'\cap\K(K\times[0,1])$ is open in $\K(K\times [0,1])$ w.r.t. the Vietoris topology (see Lemma \ref{lem:subViet}) and since $\on{hypo}(u_*)=\on{hypo}(u_*{\restriction}_ K)$  (as $[u_*]^0\subset A_\S\subset K$), we have
$$
u_*{\restriction}_K\in \{v\in\KF(K):\on{hypo}(v)\in V'\}.
$$
Hence there exists $n_0\in\N$ such that for all $n\geq n_0$, we have $$\on{hypo}\big(\tilde{\S}_{\F}^{(n)}(u{\restriction}_{ K})\big)\in V'.$$
Following the same lines as in the proof of Theorem \ref{thm:main} (involving condition (\ref{eq:fixext})) we can show that condition (\ref{eq:ooo}) also holds in this setting, that is, for any $n\in\N$, $$
e(\tilde{\S}_\F^{(n)}(u{\restriction}_K))=\S_\F^{(n)}(u).
$$
Hence, by compactness of $K$, we have $[e(\tilde{\S}_\F^{(n)}(u{\restriction}_K))]^0=[\S_\F^{(n)}(u)]^0$ and in consequence $$\on{hypo}(\tilde{\S}_{\F}^{(n)}(u{\restriction}_K))=\on{hypo}({\S}_{\F}^{(n)}(u)).$$
Hence $$\on{hypo}({\S}_{\F}^{(n)}(u))\in V'\subset W',$$
which means that ${\S}_{\F}^{(n)}(u)\in W$ for $n\geq n_0$. All in all, the sequence $({\S}_{\F}^{(n)}(u))$ converges to $u_*$ w.r.t. the topology $\Tau_h$ and $u_*$ is a weak fuzzy attractor of $\S_\F$.
\end{proof}
\subsection{A description of a (weak) fuzzy attractor}\label{sec:descr}
In this part we give a description of a (weak) fuzzy attractor. We have already observed (in the proofs of Theorems \ref{thm:main} and \ref{thm:main10}) that a (weak) fuzzy attractor is the canonical extension of a fuzzy attractor of a certain restricted fuzzy IFS, and (in the proof of Theorem \ref{thm:main10}), that its restrictions to subinvariant compact sets are fuzzy attractors of restricted fuzzy IFSs. Let us formulate these facts as separate observation.

\begin{proposition}\label{prop:main1}
    Let $\S_\F=(\S,\vr)$ is a fuzzy IFS so that $\S=(f_i)_{i=1}^k$ is topologically contracting  and let $u_\S$ be its weak fuzzy attractor.
    \begin{itemize}
        \item[(i)] A fuzzy attractor $u_\S$ is the canonical extension
    $$
    u_\S=e(u_{\hat{\S}}),
    $$
    where $u_{\hat{\S}}$ is a fuzzy attractor of 
    the fuzzy IFS $\hat{\S}_\F=(\hat{\S},\vr)$, where $\hat{\S}=(f_i{\restriction}_{A_{{\S}}})_{i=1}^k$ and $A_{\S}$ is an attractor of  ${{\S}}$.
    \item[(ii)] If $K\in\K(X)$ is such that $\S(K)\subset K$ and $\tilde{\S}_\F=(\tilde{\S},\vr)$ for $\tilde{\S}=(f_{i}{\restriction}_K)_{i=1}^k$, then the restriction $u_\S{\restriction}_K$ is a fuzzy attractor of $\tilde{\S}_\F$.
    \end{itemize}
    In particular, these facts hold true for a fuzzy attractor of a fuzzy $IFS$ on a multimetric space $(X,\D)$ which is compact dominating and Edelstein contracting. 
\end{proposition}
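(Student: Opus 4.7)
The plan is to derive both items by distilling observations already established inside the proofs of Theorems \ref{thm:main} and \ref{thm:main10}, and then invoking uniqueness of (weak) fuzzy attractors.

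For item (i), the commutation identity (\ref{eq:fixext}), namely $\S_\F\circ e = e\circ \hat{\S}_\F$, was verified in the proof of Theorem \ref{thm:main} and depends only on Lemma \ref{lem:ext}, not on any contractivity assumption. By Theorem \ref{aaa}(4), $\hat{\S}$ is Matkowski contracting w.r.t.\ some admissible metric on the compact metrizable attractor $A_\S$, hence by Theorem \ref{thm:fuzzymat} the fuzzy attractor $u_{\hat{\S}}$ of $\hat{\S}_\F$ exists. Applying (\ref{eq:fixext}) to $u_{\hat{\S}}$ yields $\S_\F(e(u_{\hat{\S}})) = e(u_{\hat{\S}})$. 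Furthermore, the proof of Theorem \ref{thm:main10} already shows that $\S_\F^{(n)}(u) \to e(u_{\hat{\S}})$ in $\Tau_h$ for every $u\in\KF(X)$. Hence $e(u_{\hat{\S}})$ satisfies both defining conditions of a weak fuzzy attractor; since $\Tau_h$ is Hausdorff the weak fuzzy attractor is unique, forcing $u_\S = e(u_{\hat{\S}})$.

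For item (ii), fix $K\in\K(X)$ with $\S(K)\subset K$. Since $\S^{(n)}(K)\subset K$ for every $n$ and $\S^{(n)}(K)\to A_\S$ in the Vietoris topology, closedness of $K$ forces $A_\S\subset K$. The restricted IFS $\tilde{\S}=(f_i{\restriction}_K)_{i=1}^k$ is topologically contracting on the compact Hausdorff space $K$: compact dominancy is witnessed by $K$ itself, and the fiber singletonness in (\ref{eq:wee}) is inherited from $\S$; the attractor of $\tilde{\S}$ is again $A_\S$, so its restricted-to-$A_\S$ subsystem coincides with $\hat{\S}$. Since $K$ is a compact Tychonoff $k$-space, Theorem \ref{thm:main4} produces a fuzzy attractor $u_{\tilde{\S}}$ of $\tilde{\S}_\F$, and item (i) applied to $\tilde{\S}_\F$ gives $u_{\tilde{\S}} = e_K(u_{\hat{\S}})$, where $e_K$ denotes extension by zero from $A_\S$ to $K$. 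By the very definition of $e$ and $e_K$, one has $e_K(u_{\hat{\S}}) = e(u_{\hat{\S}}){\restriction}_K = u_\S{\restriction}_K$, which is (ii). The final clause concerning multimetric spaces is identical: the $M$-topology is Hausdorff, existence is supplied by Theorem \ref{thm:main} (used in place of Theorem \ref{thm:main4}), and topological/compact-dominancy properties descend to $K$ using the restricted multimetric $\D_K$ from Remark \ref{rem:multimetric}.

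The main subtlety is bookkeeping rather than genuine difficulty: one must check that extension by zero intertwines $\S_\F$ with the restricted operator (already in (\ref{eq:fixext})), that topological contractivity survives restriction to a subinvariant compactum, and that the ``outer'' attractor on $X$ and the ``inner'' attractor on $A_\S$ link consistently through any intermediate $K$, so that applying (i) to the intermediate restricted IFS yields the expected restriction.
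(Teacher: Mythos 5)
Your proposal is correct and follows essentially the same route as the paper, which states this proposition without a separate proof and simply points back to the observations made inside the proofs of Theorems \ref{thm:main} and \ref{thm:main10} (the identity (\ref{eq:fixext}), the extension $u_*=e(u_{\hat{\S}})$, and the restriction to subinvariant compacta). Your write-up merely makes explicit the uniqueness step (Hausdorffness of $\Tau_h$, resp.\ $\Tau_M$) and the bookkeeping that the paper leaves implicit, so no substantive difference.
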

Finally we show that for a weak fuzzy attractor of a fuzzy IFS $\S_\F=(\S,\vr)$ with topologically contracting IFS $\S$, we have the same description as the one presented in Theorem \ref{thm:2.6}. Note that the existence of a canonical projection $\pi$ for topologically contracting IFSs is justified in Remark \ref{rem:prof2}.
\begin{theorem}
    If $\S_\F=(\S,\vr)$ is a fuzzy IFS so that $\S$ is topologically contracting, then its weak fuzzy attractor $u_\S$ is of the form
    $$
        u_\S=\pi[u_\Lambda],
    $$
    where $u_\Lambda$ is given by (\ref{eq:canat}) and $\pi:\Sigma_k\to X$ is the canonical projection.\\
    In particular, this description holds true for a fuzzy attractor of a fuzzy $IFS$ on a multimetric space $(X,\D)$ which is compact dominating and Edelstein contracting. 
\end{theorem}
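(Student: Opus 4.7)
The plan is to reduce to the metric case by restricting everything to the attractor $A_\S$ of the underlying IFS $\S$, applying Theorem \ref{thm:2.6} there, and then transferring the resulting formula back to $X$ via the canonical extension recorded in Proposition \ref{prop:main1}(i).

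First I would set up the restricted objects. Let $A_\S\in\K(X)$ be the attractor of the topologically contracting IFS $\S$ (which exists and is metrizable by Theorem \ref{TIFS}). Define $\hat{\S}=(f_i{\restriction}_{A_\S})_{i=1}^k$ and the fuzzy IFS $\hat{\S}_\F=(\hat{\S},\varrho)$ on $A_\S$. By Theorem \ref{aaa}(4), there is an admissible metric $d$ on $A_\S$ with respect to which all $f_i{\restriction}_{A_\S}$ are Matkowski contractions, so $(A_\S,d)$ is a compact (hence complete) metric space on which $\hat{\S}_\F$ is a Matkowski contracting fuzzy IFS. Therefore Theorem \ref{thm:2.6} applies and yields
\[
u_{\hat{\S}}=\hat{\pi}[u_\Lambda],
\]
where $\hat{\pi}:\Sigma_k\to A_\S$ is the projection of $\hat{\S}$ and $u_\Lambda$ is given by (\ref{eq:canat}); note that $u_\Lambda$ depends only on the grey level maps $\varrho$, so it is the same fuzzy set on $\Sigma_k$ in both the restricted and the ambient settings.

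Next I would identify $\hat{\pi}$ with the ambient projection $\pi:\Sigma_k\to X$ of $\S$. By Remark \ref{rem:prof2}, the singleton $\bigcap_n f_{\sigma_1}\circ\dots\circ f_{\sigma_n}(D)$ defining $\pi(\sigma)$ does not depend on the choice of $D\in\K(X)$ with $\S(D)\subset D$. Taking $D=A_\S$ shows that $\pi(\Sigma_k)\subseteq A_\S$ and that $\pi=\hat{\pi}$ when $\pi$ is viewed as a map into $A_\S$. Consequently, for $y\in A_\S$ we have $\pi^{-1}(y)=\hat{\pi}^{-1}(y)$, and for $y\notin A_\S$ we have $\pi^{-1}(y)=\emptyset$.

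Finally, I would combine these facts with Proposition \ref{prop:main1}(i), which tells us that the weak fuzzy attractor equals $u_\S=e(u_{\hat{\S}})$. Using the defining formula (\ref{f[u]}) for the image of a fuzzy set, and the two cases identified above, we get for every $y\in X$
\[
\pi[u_\Lambda](y)=
\begin{cases}
\sup\{u_\Lambda(\sigma):\sigma\in\hat{\pi}^{-1}(y)\} & \text{if } y\in A_\S,\\
0 & \text{if } y\notin A_\S,
\end{cases}
=e(\hat{\pi}[u_\Lambda])(y)=e(u_{\hat{\S}})(y)=u_\S(y),
\]
which is the desired formula. The only delicate point, and the main obstacle, is the consistent identification of the two projections: one must verify carefully that the intersection defining $\pi$ for the ambient IFS actually produces a point of $A_\S$ and that the fuzzy image $\pi[u_\Lambda]$ vanishes off $A_\S$, so that the extension-by-zero on the metric side really matches the Zadeh image on the topological side. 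The final sentence of the theorem (the multimetric case) is immediate, since under the assumptions of Theorem \ref{thm:main} the fuzzy attractor is a weak fuzzy attractor (as $\Tau_h\subset\Tau_M$ on Tychonoff spaces by Theorem \ref{thm:zawieranie2}) and the above argument applies verbatim.
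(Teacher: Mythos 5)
Your proposal is correct and follows essentially the same route as the paper: restrict to the attractor $A_\S$, apply Theorem \ref{thm:2.6} to the restricted Matkowski contracting fuzzy IFS, identify $\hat{\pi}$ with $\pi$ via the independence of the fiber intersection from the choice of $D$, and transfer back through $u_\S=e(u_{\hat{\S}})$ from Proposition \ref{prop:main1}(i). The only difference is that you spell out in more detail the two points the paper states briefly (``$\hat{\pi}=\pi$ as maps'' and ``$\pi(\Sigma_k)=A_\S$'' giving $e(\hat{\pi}[u_\Lambda])=\pi[u_\Lambda]$), which is a welcome clarification rather than a deviation.
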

\begin{proof}
Let $\hat{\S}_\F=(\hat{\S},\vr)$, $u_{\hat{\S}}$ and $A_{{\S}}$ be as in Proposition \ref{prop:main1}, item (i). By Theorem \ref{thm:2.6}, we have that
$$
    u_{\hat{\S}}=\hat{\pi}[u_\Lambda],
$$
where $\hat{\pi}$ is the canonical projection associated to $\hat{\S}$. Clearly, $\hat{\pi}=\pi$ as maps. Hence and by Proposition \ref{prop:main1}, we have that
$$
    u_\S=e(u_{\hat{\S}})=e(\hat{\pi}[u_\Lambda])=\pi[u_\Lambda].
$$
In the last equality we used the fact that $\pi(\Sigma_k)=A_\S$.
\end{proof}
As a corollary of Proposition \ref{prop:main1} we can get a description of a fuzzy (weak) attractor for trivial admissible systems of grey level maps. Recall that for a set $C\subset X$, by $\chi_C$ we denote the characteristic function of $C$.
\begin{proposition}
    Let $\S_\F=(\S,\vr)$ is a fuzzy IFS so that $\S=(f_i)_{i=1}^k$ is topologically contracting and $\vr=(\vr_i)_{i=1}^k$ is such that $\vr_i(1)=1$ for any $i=1,...,k$. Then a weak fuzzy attractor $u_\S$ equals
    $$
    u_\S=\chi_{A_\S}.
    $$
\end{proposition}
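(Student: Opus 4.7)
The plan is to apply the description from the theorem immediately preceding this proposition, which says that the weak fuzzy attractor can be written as $u_\S=\pi[u_\Lambda]$, where $\pi:\Sigma_k\to X$ is the canonical projection of the topologically contracting IFS $\S$ and $u_\Lambda(\sigma)=\lim_{n\to\infty}\varrho_{\sigma_1}\circ\ldots\circ\varrho_{\sigma_n}(1)$ for $\sigma=(\sigma_n)\in\Sigma_k$.

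First I would compute $u_\Lambda$. Under the hypothesis $\varrho_i(1)=1$ for every $i=1,\ldots,k$, a trivial induction shows that $\varrho_{\sigma_1}\circ\ldots\circ\varrho_{\sigma_n}(1)=1$ for all $n\in\N$ and all $\sigma\in\Sigma_k$. Therefore $u_\Lambda\equiv 1$ on $\Sigma_k$, i.e.\ $u_\Lambda=\chi_{\Sigma_k}$.

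Next I would compute the Zadeh image $\pi[u_\Lambda]$ using definition (\ref{f[u]}). For any $x\in X$, if $\pi^{-1}(x)\ne\emptyset$, then $\pi[u_\Lambda](x)=\sup\{u_\Lambda(\sigma):\pi(\sigma)=x\}=1$, while if $\pi^{-1}(x)=\emptyset$, then $\pi[u_\Lambda](x)=0$. Thus $\pi[u_\Lambda]=\chi_{\pi(\Sigma_k)}$. Using the equality $\pi(\Sigma_k)=A_\S$ already invoked at the end of the previous proof (the projection from the code space maps onto the attractor of a topologically contracting IFS), we conclude $u_\S=\pi[u_\Lambda]=\chi_{A_\S}$.

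There is no real obstacle here — the proposition reduces immediately to the identification $u_\Lambda\equiv 1$ together with $\pi(\Sigma_k)=A_\S$. As a sanity check one may also verify the fixed-point equation directly: for $u=\chi_{A_\S}$ and any $i$, the image $f_i[\chi_{A_\S}]=\chi_{f_i(A_\S)}$ by the formula in Remark \ref{rem:2.7}, and $\varrho_i\circ\chi_{f_i(A_\S)}=\chi_{f_i(A_\S)}$ because $\varrho_i(1)=1$ and $\varrho_i(0)=0$; taking the maximum over $i=1,\ldots,k$ gives $\S_\F(\chi_{A_\S})=\chi_{\bigcup_i f_i(A_\S)}=\chi_{A_\S}$, confirming invariance. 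Uniqueness of the weak fuzzy attractor (guaranteed by Theorem \ref{thm:main10}) then identifies it with $\chi_{A_\S}$.
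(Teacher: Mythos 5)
Your proof is correct, but it follows a different route from the paper's. The paper proves this proposition by citing an external result (\cite[Corollary 4.6 and Remark 4.7, for $m=1$]{OS}) to conclude that the fuzzy attractor of the restricted fuzzy IFS $\hat{\S}_\F=(\hat{\S},\vr)$ on the attractor $A_\S$ equals $\chi_{A_\S}$ there, and then invokes Proposition \ref{prop:main1}(i) to extend canonically to all of $X$. You instead specialize the projection-map description $u_\S=\pi[u_\Lambda]$ from the theorem immediately preceding the proposition: the hypothesis $\vr_i(1)=1$ gives $u_\Lambda\equiv 1$ by a trivial induction, and the Zadeh image of the constant-one fuzzy set under $\pi$ is $\chi_{\pi(\Sigma_k)}=\chi_{A_\S}$, using the surjectivity $\pi(\Sigma_k)=A_\S$ that the paper itself invokes at the end of that theorem's proof. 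Both arguments are sound and both ultimately rest on the same machinery (the restricted fuzzy attractor plus canonical extension); your version has the advantage of being self-contained within the results already stated in the paper, at the cost of routing through the code space, while the paper's is a one-line reduction to a known corollary in \cite{OS}. Your closing sanity check (verifying $\S_\F(\chi_{A_\S})=\chi_{A_\S}$ directly and appealing to uniqueness from Theorem \ref{thm:main10}) is also correct, though strictly redundant once the projection description has been applied.
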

\begin{proof}
By \cite[Corollary 4.6 and Remark 4.7, for $m=1$]{OS}, if $\vr_i(1)=1$ for all $i=1,...,k$, then a fuzzy attractor of the restricted fuzzy IFS $\hat{\S}_\F=(\hat{\S},\vr)$ for $\hat{\S}=(f_i{\restriction}_{A_\S})_{i=1}^k$, equals $\chi_{A_\S}$ on $A_\S$. Thus Proposition \ref{prop:main1}, item (i), implies the assertion.
\end{proof}

\section{Other topologies on the space $\KF(X)$}\label{sec:5}
\subsection{An alternative version of a topology generated by hypographs}\label{sec:5.1}
Let $X$ be a Hausdorff topological space.\\
We can consider another topology on $\KF(X)$ by identifying fuzzy sets $u\in\KF(X)$ with their hypographs defined by $\on{hypo}_0(u)$, mentioned in Subsection \ref{sec:hypo}:
$$
    \on{hypo}_0(u):=\{(x,\alpha):x\in X,\; 0\leq\alpha\leq u(x)\}=\on{hypo}(u)\cup(X\times\{0\})
$$
as closed (not necessarily compact) subsets of $X\times [0,1]$.
More precisely, let the topology $\Tau_{h_0}$, called as the $h_0$-topology, consists of sets of the form $$\{u\in\KF(X):\on{hypo}_0(u)\in U\},$$ for open sets $U\subset \mathcal{C}(X\times [0,1])$, where we endow the space $\mathcal{C}(X\times [0,1])$ of all closed and nonempty subsets of $X\times [0,1]$ with the Vietoris topology (see \cite[2.7.20]{En}). Clearly, if $X$ is additionally compact, then $\mathcal{C}(X\times [0,1])=\K(X\times[0,1])$. 

It turns out that the $h_0$-topology $\Tau_{h_0}$ is weaker than the topology $\Tau_{h}$.
\begin{proposition}
    In the above frame, $\Tau_{h_0}\subset \Tau_{h}$.
\end{proposition}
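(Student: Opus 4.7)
The plan is to unravel the definitions and check, for each subbasic open set of the Vietoris topology on $\mathcal{C}(X\times[0,1])$, that its pullback under $u\mapsto \on{hypo}_0(u)$ lies in $\Tau_h$. Recall that subbasic Vietoris-open sets have one of the two forms
$$\langle V\rangle^+=\{C\in\mathcal{C}(X\times[0,1]):C\subset V\}\quad\text{or}\quad\langle V\rangle^-=\{C\in\mathcal{C}(X\times[0,1]):C\cap V\ne\emptyset\},$$
where $V\subset X\times[0,1]$ is open, and analogous sets $\langle V\rangle^+_\K,\langle V\rangle^-_\K$ in $\K(X\times[0,1])$ are open in the Vietoris topology. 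Since $\Tau_{h_0}$ is the initial topology pulled back along $\on{hypo}_0$, it is enough to check openness of the preimages of $\langle V\rangle^+$ and $\langle V\rangle^-$ in $\Tau_h$.

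The decisive observation is the decomposition $\on{hypo}_0(u)=\on{hypo}(u)\cup(X\times\{0\})$, in which the summand $X\times\{0\}$ is independent of $u$. This lets one split into cases. For $\langle V\rangle^+$: if $X\times\{0\}\not\subset V$ then the inclusion $\on{hypo}_0(u)\subset V$ can never hold, so the preimage is empty; if $X\times\{0\}\subset V$ then $\on{hypo}_0(u)\subset V$ is equivalent to $\on{hypo}(u)\subset V$, and the preimage equals
$$\{u\in\KF(X):\on{hypo}(u)\in\langle V\rangle^+_\K\},$$
which belongs to $\Tau_h$ by definition. For $\langle V\rangle^-$: if $V\cap(X\times\{0\})\ne\emptyset$, then $\on{hypo}_0(u)\cap V\ne\emptyset$ holds automatically for every $u\in\KF(X)$, so the preimage is the whole $\KF(X)$; otherwise $\on{hypo}_0(u)\cap V=\on{hypo}(u)\cap V$, and the preimage is $\{u:\on{hypo}(u)\in\langle V\rangle^-_\K\}$, again in $\Tau_h$.

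Each subbasic $\Tau_{h_0}$-open set therefore pulls back to a $\Tau_h$-open set, which yields $\Tau_{h_0}\subset\Tau_h$. There is no real obstacle: the proof is a bookkeeping exercise comparing the two hypographs, and the only conceptual point is isolating the trivial $X\times\{0\}$-contribution that distinguishes $\on{hypo}_0(u)$ from $\on{hypo}(u)$.
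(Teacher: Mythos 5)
Your proof is correct and follows essentially the same route as the paper's: both arguments reduce to the decomposition $\on{hypo}_0(u)=\on{hypo}(u)\cup(X\times\{0\})$ and isolate the $u$-independent summand $X\times\{0\}$ in each of the two subbasic Vietoris cases. The only cosmetic difference is that you split cases according to whether $V$ contains or meets $X\times\{0\}$ globally, whereas the paper argues pointwise around a chosen element $u_0$; the content is identical.
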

\begin{proof}
Take a subbasic open set in $\Tau_{h_0}$ of one of the forms
$$\langle U\rangle^+=\{u\in\KF(X):\on{hypo}_0(u)\subset U \}\;\;\;\mbox{or}\;\;\;\langle U\rangle ^-=\{u\in\KF(X):\on{hypo}_0(u)\cap U\neq \emptyset\}$$ for some non-empty open set $U$ in $X\times [0,1]$.\\$\;$\\
First deal with the case $\langle U\rangle^+$ and choose any its element $u_0$.\\
Then $\on{hypo}_0(u_0)\subset U$ and in particular $X\times\{0\}\subset U$. Hence $W:=\{u\in\KF(X):\on{hypo}(u)\subset U\}\in\Tau_h$ and $u_0\in W\subset \langle U\rangle^+$.\\$\;$\\
Now consider $\langle U\rangle^-$ and choose any its element $u_0$.\\
If $\on{hypo}(u_0)\cap U=\emptyset$, then $(x,0)\in U$ for some $x\in X$ and hence $\langle U\rangle^-=\KF(X)\in\Tau_h$. If $\on{hypo}(u_0)\cap U\neq\emptyset$, then $W:=\{u\in\KF(X):\on{hypo}(u)\cap U\neq\emptyset \}\in\Tau_h$ and $u_0\in W\subset \langle U\rangle^-$.
\end{proof}

The example below show that the inclusion is usually strict. Recall that for a point $c\in X$, the Dirac function $\chi_c$ of $c$ is the characteristic function of $\{c\}$.
\begin{exmp}\label{ex:aass} Assume that $X$ is a Hausdorff topological space with at least two elements $a,b$, and consider the function $\chi_a+\frac1n\chi_b$. It is easy to see that the sequence $(\chi_a+\frac1n\chi_b)_{n=1}^\infty$ convergence to the Dirac function $\chi_a$ in the $h_0$-topology, but not in the $h$-topology. Consequently, $\Tau_{h_0}\subsetneq\Tau_h$.
\end{exmp}

The above shows that the $h$-topology $\Tau_h$ is better than the $h_0$-topology $\Tau_{h_0}$ from the perspective of behaviour of invariant fuzzy sets, as convergence w.r.t. $\Tau_h$ implies the convergence w.r.t. $\Tau_{h_0}$. On the other hand, the good property of the $h_0$-topology $\Tau_{h_0}$ is that it preserves compactness, which need not be the case for the topology $\Tau_h$.
\begin{proposition}
    Let $X$ be a Hausdorff topological space. Then $X$ is compact iff  $(\KF(X),\Tau_{h_0})$ is compact.
\end{proposition}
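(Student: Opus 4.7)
The plan is to prove both implications. For $(\Rightarrow)$, I will identify $(\KF(X),\Tau_{h_0})$ with a subset of the Vietoris hyperspace of the compact space $X\times[0,1]$ and verify that this subset is closed; for $(\Leftarrow)$, I will argue by contradiction using a net of Dirac functions.

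Assume first that $X$ is compact. Then $X\times[0,1]$ is compact Hausdorff, so $\mathcal{C}(X\times[0,1])=\K(X\times[0,1])$ is compact in the Vietoris topology. By the very definition of $\Tau_{h_0}$, the map $u\mapsto\on{hypo}_0(u)$ is a topological embedding of $(\KF(X),\Tau_{h_0})$, so it suffices to show that its image $\mathcal{I}$ is closed. I plan to characterize $\mathcal{I}$ as the set of those $C\in\K(X\times[0,1])$ satisfying (a) $X\times\{0\}\subset C$, (b) $(x,\alpha)\in C$ and $0\leq\beta\leq\alpha$ imply $(x,\beta)\in C$, and (c) $C\cap(X\times\{1\})\neq\emptyset$. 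Given such a $C$, the fact that each fiber $\{\alpha:(x,\alpha)\in C\}$ is closed in $[0,1]$ and contains $0$ lets me set $u(x):=\max\{\alpha:(x,\alpha)\in C\}$; condition (b) then gives $C=\on{hypo}_0(u)$, each $\alpha$-cut $[u]^\alpha$ equals the projection of the compact set $C\cap(X\times\{\alpha\})$ and so is closed, making $u$ USC; (c) yields normality, and the compactness of $X$ yields compact support.

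The main obstacle is checking that (a), (b), (c) each define a closed subset of $\K(X\times[0,1])$. Conditions (a) and (c) follow from the standard fact that for any closed $A\subset X\times[0,1]$ both $\{K:A\subset K\}$ and $\{K:K\cap A\neq\emptyset\}$ are closed in the Vietoris topology on a compact Hausdorff space, by separation of disjoint closed sets. For (b), I plan a direct openness argument for the complement: if $C_0$ violates (b) via $(x_0,\alpha_0)\in C_0$ and $(x_0,\beta_0)\notin C_0$ with $\beta_0<\alpha_0$, use regularity of $X\times[0,1]$ to find an open product neighborhood $W=V\times I$ of $(x_0,\beta_0)$ with $\overline{W}\cap C_0=\emptyset$ and $I\subset[0,\gamma)$, where $\gamma:=(\alpha_0+\beta_0)/2$. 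Set $V'':=(X\times[0,1])\setminus\overline{W}$, open and containing $C_0$, and $U:=V\times(\gamma,1]$, open and containing $(x_0,\alpha_0)$. Then $\{C:C\subset V''\}\cap\{C:C\cap U\neq\emptyset\}$ is a Vietoris open neighborhood of $C_0$; any $C'$ in it contains some $(y,\delta)\in C'\cap U$ with $y\in V$ and $\delta>\gamma$, while $(y,\beta_0)\in W$ is excluded from $C'$, witnessing the failure of (b) for $C'$. This proves $\mathcal{I}$ is closed, hence compact, whence so is $(\KF(X),\Tau_{h_0})$.

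For $(\Leftarrow)$, I argue by contradiction. Assume $(\KF(X),\Tau_{h_0})$ is compact but $X$ is not, and pick a net $(x_\alpha)$ in $X$ with no convergent subnet. The Dirac functions $\chi_{x_\alpha}$ lie in $\KF(X)$, so by compactness some subnet $\chi_{x_\beta}$ converges to some $u\in\KF(X)$ in $\Tau_{h_0}$. By normality, choose $y_0$ with $u(y_0)=1$, so $(y_0,1)\in\on{hypo}_0(u)$. For any open neighborhood $V$ of $y_0$ in $X$, the set $V\times(1/2,1]$ meets $\on{hypo}_0(u)$, hence by Vietoris convergence eventually meets $\on{hypo}_0(\chi_{x_\beta})=(\{x_\beta\}\times[0,1])\cup(X\times\{0\})$; since $V\times(1/2,1]$ is disjoint from $X\times\{0\}$, this forces $x_\beta\in V$ eventually. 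Thus $x_\beta\to y_0$, contradicting the choice of $(x_\alpha)$.
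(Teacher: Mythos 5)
Your proof is correct and follows essentially the same route as the paper: both directions hinge on identifying $\KF(X)$ with the set of hypographs inside the Vietoris hyperspace of $X\times[0,1]$ and verifying that this image is closed, your conditions (a), (b), (c) being exactly the negations of the paper's three cases (i)--(iii), and your openness argument for the ``downward-closedness'' condition (b) matching the paper's treatment of its case (ii). The only cosmetic difference is in the converse, where you run a net argument with Dirac functions instead of observing, as the paper does, that $\{\chi_x:x\in X\}$ is a closed copy of $X$ in $(\KF(X),\Tau_{h_0})$; both versions rest on the same idea.
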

\begin{proof} 
Assume first that $X$ is compact.
By definition of the topology $\Tau_{h_0}$, it is enough to prove that the set
$$
\K_{h_0}:=\{K\in\K(X\times [0,1]):K=\on{hypo}_0(u)\;\mbox{for some }u\in\KF(X)\}
$$
is closed in $\K(X\times[0,1])$ endowed with the Vietoris topology (which is compact as $X\times[0,1]$ is compact).\\
Observe that a compact set $\emptyset\neq K\subseteq X\times[0,1]$ does not belong to $\mathcal K_{h_0}$ iff one of the following conditions hold:
\begin{itemize}
    \item[$(i)$] $(x,0)\notin K$ for some $x\in X$,
    \item[$(ii)$] there exists $x\in X$ and $0\leq t<\alpha$ such that $(x,\alpha)\in K$ and $(x,t)\notin K$, or
    \item[$(iii)$] $(x,1)\notin K$ for all $x\in X$.
\end{itemize}
Take any $K\notin \K_{h_0}$. If $(i)$ holds, then the set $\{C\in \mathcal K(X\times[0,1]):(x,0)\notin C\}$ is an open neighborhood of $K$, disjoint from $\mathcal K_{h_0}$. If $(ii)$ holds, then choose an open neighborhood $U_x$ of $x$ in $X$ such that $(\overline U_x\times\{t\})\cap K=\emptyset$ (we can take such a set $U_x$ as $X$ is regular, being compact Hausdorff) and observe that $$\{C\in \mathcal K(X\times[0,1]):(\overline U_x\times\{t\})\cap C=\emptyset\ne (U_x\times (t,1])\cap C\}$$ is an open neighborhood of $K$, disjoint from $\mathcal K_{h_0}$. Finally, if $(iii)$ holds, then the set
$\{C\in\K(X\times[0,1]):C\subset X\times[0,1)\}$ is an open neighborhood of $K$, disjoint from $\mathcal K_{h_0}$. All in all, $\K_{h_0}$ is closed, hence compact.

Now assume that $(\KF(X),\Tau_{h_0})$ is compact. Identifying the space $X$ with the closed subspace $\{\chi_x:x\in X\}$ of $(\KF(X),\Tau_{h_0})$, we conclude that the space $X$ is compact. 
\end{proof}
\begin{exmp} Example \ref{ex:aass} shows that if $X$ has at least two points, then the topological space $(\KF(X),\Tau_{h})$ is not compact. Indeed, the sequence $(\chi_a+\frac1n\chi_b)_{n=1}^\infty$ has no accumulation points in the space $(\KF(X),\Tau_h)$, witnessing that this space is not (countably) compact (see \cite[3.10.3]{En}).
\end{exmp}

\subsection{The pointwise and the uniform topologies on $\KF(X)$}\label{sec:unif}
Let $X$ be a Hausdorff topological space. Looking at the set $\KF(X)$ as a subset of the family $[0,1]^X$ of all functions from $X$ to $[0,1]$, we can consider topologies on $\KF(X)$ induced by natural topologies from $[0,1]^X$. We will consider two ,,extremal'' ones - the \emph{uniform convergence topology} $\Tau_u$, induced by the $\ell_\infty$-metric 
$$
d_{\infty}(u,v):=\sup\{|u(x)-v(x)|:x\in X\}, \;\;u,v\in\KF(X),
$$
and the \emph{pointwise convergence topology} $\Tau_p$, whose subbasis consists of sets of the form
$$
\{u\in\KF(X):|u(x_0)-u_0(x_0)|<\varepsilon\},
$$
where $u_0\in\KF(X)$, $x_0\in X$ and $\varepsilon>0$. Clearly, $\Tau_p\subset\Tau_h$. 
The following examples show that these topologies are not good for our purposes.
In the first one we observe that the topologies $\Tau_p$, $\Tau_u$ and the M-topology $\Tau_M$ may be incomparabe.
\begin{exmp}\label{Tau u neq Tau DHF 1}
         Let $X$ be any Tychonoff space.

Assume that $X$ has two distinct points $a,b$ in $X$. Then the sequence $(\chi_a+\frac1n\chi_b)_{n=1}^\infty$ converges to the Dirac function $\chi_a$ in the $\ell_\infty$-metric on $\KF(X)$, but does not converge to $\chi_a$ in the topology $\Tau_M$ (or $\Tau_h$). Therefore,
 $\Tau_M\nsubseteq\Tau_u$ and hence  $\Tau_M\nsubseteq\Tau_p$.

Now assume that $X$ has a point $a$ which is not isolated in $X$. Then the set $\{u\in\KF(X):u(a)>\frac12\}$ is a neighborhood of $\chi_a$ in the topology $\Tau_p\subset\Tau_u$, but this set is not a neighborhood of $a$ in the topology $\Tau_M$ (because of the continuity of the function $X\to\KF(X)$, $x\mapsto\chi_x$, in the $M$-topology on $\KF(X)$).

    \end{exmp}
In the next example we observe that in a simple case of Banach contractive fuzzy IFSs on metric spaces, we cannot guarantee the convergence of sequences of iterations of a fuzzy Hutchinson operator to a fuzzy attractor w.r.t. the topology $\Tau_p$ (in particular, w.r.t. the topology $\Tau_u$ or the metric $d_{\infty}$).

\begin{exmp}
Let again $X=[0,1]$ be endowed with the Euclidean metric $d$, and consider the fuzzy IFS $\S_\F=(\{f\},\{\rho\})$, where $f(x):=\frac{1}{2}x$ and $\rho(x):=x$ for $x\in[0,1]$. 
Then for any $u\in\KF(X)$, we have
\begin{equation}\label{kl}
\S_\F(v)(y)=\rho(f[v](y))=f[v](y)=\left\{\begin{array}{ccc}\sup\{v(x):x\in f^{-1}(y)\}&\mbox{if}&y\in[0,\frac{1}{2}]\\0&\mbox{if}&y\in(\frac{1}{2},1]\end{array}\right.=\left\{\begin{array}{ccc}v(2y)&\mbox{if}&y\in\big[0,\frac{1}{2}\big]\\0&\mbox{if}&y\in\big(\frac{1}{2},1\big]\end{array}\right..
\end{equation}
According to Theorem \ref{thm:fuzzymat}, the fuzzy IFS $\S_\F$ generates a fuzzy attractor $u_\S$, and by (\ref{kl}), it is easy to see that it is the characteristic function of $\{0\}$ is its fuzzy attractor, $u_\S=\chi_0$.

Now let $u=\chi_1$.
Using  (\ref{kl}) and an inductive argument, we see that for every $n\in\N$,
$
\S_\F^{(n)}(u)=\chi_{\frac{1}{2^n}}
$.
Hence $(\S_\F^{(n)}(u))$ does not converge to $u_{\S}$ w.r.t. the $\Tau_p$ topology.
\end{exmp}

\bibliographystyle{amsplain}

\begin{thebibliography}{10}
    \bibitem{BKNNS} T. Banakh, W. Kubiś, N. Novosad, M. Nowak, F. Strobin, \textit{contracting function systems, their attractors and metrization}, Topol. Methods Nonlinear Anal. 46(2) (2015), pp. 1029--1066.

    \bibitem{B} M. Barnsley, \emph{Fractals Everywhere}, Academic Press Professional, Boston, MA (1993).

    \bibitem{Beer} G. Beer, \textit{
A natural topology for upper semicontinuous functions and a Baire category dual for convergence in measure}, Pacific J. Math., Vol. 96 (1981), No. 2, 251--263.

    \bibitem{Bourbaki} N. Bourbaki, \emph{Elements of mathematics. General topology. Part 1}, Hermann, Paris; Addison-Wesley Publishing Co., Reading, Mass.-London-Don Mills, Ont. (1966).

    \bibitem{Cabrelli} C. Cabrelli, B. Forte, U. Molter, E. Vrscay, \textit{Iterated fuzzy set systems: a new approach to the inverse problem for fractals and other sets}, J. Math. Anal. Appl. 171(1) (1992) 79--100.

    \bibitem{Choc} G. Choquet,\emph{Topology}, volume 19 of Pure and Applied Mathematics,
A Series of Monographs and Textbooks. Academic Press, New York and
London (1966).

    \bibitem{En} R. Engelking, \emph{General Topology}, volume 6 of Sigma Series in Pure Mathematics, Heldermann Verlag, Berlin, revised and completed edition (1989).

    \bibitem{JJ} J. Jachymski and I. Jóźwik, \emph{Nonlinear contracting conditions: a comparison and related problems},
Banach Center Publ. 77 (2007), pp. 123–146.

\bibitem{KSW} M. Kolenda, F. Strobin, K. Wiśniewski, \emph{Chaos game algorithm for fuzzy iterated function systems}, Fuzzy Sets Syst. 500 (2025), 109173. 

\bibitem{Kameyama} A. Kameyama, \emph{Distances on topological self-similar sets and the kneading determinants}, J. Math. Kyoto Univ., 40 (2000), 601--672

    \bibitem{LSS} K. Leśniak, N. Snigireva, and F. Strobin, \emph{Weakly contracting iterated function systems and beyond:
    a manual}, Journal of Difference Equations and Applications, 26(8) (2020), 1114--1173.


    \bibitem{M1}  A. Mihail, \emph{A topological version of iterated function systems}, An Ştiinţ. Univ. Al. I. Cuza, Iaşi,
    (S.N.), Matematica 58 (2012), pp. 105--120.

 \bibitem{Miculescu} R. Miculescu, A. Mihail, I. Savu,  \emph{The structure of fuzzy fractals generated by an orbital fuzzy iterated function system} Demonstratio Mathematica, vol. 56, no. 1, 2023, pp. 20220217.

    \bibitem{MM} R. Miculescu and A. Mihail, \emph{On a question of A. Kameyama concerning self-similar metrics}, J. Math. Anal. Appl. 422(1) (2015), pp. 265--271.

    \bibitem{OS} E. R. Oliveira, F. Strobin,
    \textit{Fuzzy attractors appearing from GIFZS},
    Fuzzy Sets Syst. 331 (2018), 131–156.

    \bibitem{Yang} Z. Yang, X. Zhou, \textit{A pair of spaces of upper semi-continuous maps and continuous maps}, Topology Appl. 154 (8) (2007), 1737--1747. 


    \bibitem{Zadeh} L.A. Zadeh, \emph{Fuzzy sets}, Inf. Control 8 (1965) 338–353.
        
\end{thebibliography}

\end{document}